\titleformat{\section}[hang]{\large\bfseries}{\thesection\quad}{0pt}{}[]
\titleformat{\subsection}[hang]{\normalsize\bfseries}{\thesubsection\quad}{0pt}{}[]
\newtheorem{theorem}{Theorem}[section]
\newtheorem{lemma}{Lemma}[section]
\newtheorem{proposition}[theorem]{Proposition}
\newtheorem{definition}[theorem]{Definition}
\newtheorem{remark}[theorem]{Remark}
\newtheorem{assumption}[theorem]{Assumption}
\newenvironment{proof}{{\noindent\it Proof}\quad}{\hfill $\square$\par}
\numberwithin{equation}{section}
\begin{document}
\title{\textbf{Large and Moderate deviation for multivalued McKean-Vlasov stochastic differential equation}}
\author{Wei Liu, Fengwu Zhu}
\affil {School of Mathematics and Statistics, Wuhan University \\
Wuhan, Hubei 430072, PR China\\
wliu.math@whu.edu.cn,\ \ \ 2018202010044@whu.edu.cn}


\date{}

\maketitle

\noindent{\bf Abstract}
\quad In this paper, we present sufficient conditions and criteria to establish the large and moderate deviation principle for multivalued McKean-Vlasov stochastic differential equation by means of the weak convergence method.
\vskip0.3cm

\noindent {\bf Key words}{\quad Multivalued McKean-Vlasov SDE, Large deviation, Moderate deviation, Weak convergence method.}
\vskip0.3cm



\section{Introduction}

   Consider the following multivalued McKean-Vlasov stochastic differential equation (SDE in short):
\begin{equation}\label{mve}
\begin{cases}
	\mathrm{d}X_{t}\in b\left(X_{t},\mathcal{L}_{X_{t}}\right)\,\mathrm{d}t+\sigma \left(X_{t},\mathcal{L}_{X_{t}}\right)\,\mathrm{d}W_{t}-A(X_{t})\,\mathrm{d}t,\\
	X_{0}=x\in \overline{D(A)},
\end{cases}
\end{equation}
where $A:\mathbb{R} ^{d}\rightarrow 2^{\mathbb{R} ^{d}}$ is a multivalued maximal monotone operator with domain $D(A)$ which will be described below, and $ (W_{t})_{t\geq 0}$ is a d-dimensional standard Brownian motion on some filtered probability space $(\Omega ,\mathcal{F},P;\left( \mathcal{F}_{t}\right)_{t\geq 0})$. Here the coefficients $b$ and $\sigma$ depend not only on the current state $X_{t}$ but also on $\mathcal{L}_{X_t}$ which is the law of $X_t$. Notice that when the maximal monotone operator $A$ is the sub-differential of the indicator of a convex set, SDE (\ref{mve}) describes the reflected diffusion process.


The existence and uniqueness of the solution for equation (\ref{mve}) were studied by Chi \cite{CHM}. Under the global Lipschitz conditions on the coefficients $b$ and $\sigma$, Chi proved that there exists a unique pair of processes $(X_{t}, K_{t})$ so that
$$X_{t}=h+\int_{0}^{t}b\left( X_{s},\mathcal{L}_{X_{s}}\right)\,\mathrm{d}s+\int_{0}^{t}\sigma\left( X_{s},\mathcal{L}_{X_{s}}\right)\,\mathrm{d}W_{s}-K_{t},$$
where $K_{t}$ is a finite-variation process which will be explained in (\ref{kd}) below.

Note that when $A=0$, equation (\ref{mve}) is just the  classical McKean-Vlasov SDE, which was first suggested by Kac \cite{[Kac]} as a stochastic toy model for the Vlasov kinetic equation of plasma, and then introduced by McKean \cite{[MK]} to model plasma dynamics. These equations correspond to nonlinear Fokker-Planck equations in the field of partial differential equation and frequently appear in many nonlinear physical models such as Boltzmann equation and Landau equation. They also describe limiting behaviors of individual particles in an interacting particle system of mean-field type when the number of particles goes to infinity (so-called propagation of chaos). For this reason McKean-Vlasov SDEs are also referred as mean-field SDEs. The theory and applications of McKean-Vlasov SDEs and associated interacting particle systems have been extensively studied by a large number of researchers under various settings due to their wide range of applications in several fields, including physics, chemistry, biology, economics, financial mathematics etc., see \cite{[CGM],[GH],[GLWZ1],[GLWZ2],[LW],[LWZ],[MS],[TH]} and the references therein. For the existence and uniqueness of solution of McKean-Vlasov SDE, the reader is referred to \cite{[MK],[G],[MS],[SZ]} and recent works \cite{[MV],[HSS],[RZ]} as well as the references therein.

On the other hand, the equation (\ref{mve}) is called a multivalued SDE when the coefficients $b$ and $\sigma$ do not depend on $\mathcal{L}_{X_t}$. The existence and uniqueness of solution for multivalued SDE were first proved by C\'epa \cite{cepa,cepa1}. Zhang \cite{ZX} extended C\'epa’s result to the infinite-dimensional case and relaxed the Lipschitz assumption on $b$ to the monotone case. Xu \cite{xu} showed explicit solutions for multivalued SDEs. Ren et al. \cite{zhwu} obtained the exponential ergodicity of non-Lipschitz multivalued SDEs. The regularity of invariant measures of MSDEs was studied by Ren and Wu \cite{ren}.

We now consider the following small perturbation of equation (\ref{mve}):
\begin{equation}\label{spmve}
	\begin{cases}
		\mathrm{d}X_{t}^{\epsilon}\in b\left( X_{t}^{\epsilon},\mathcal{L}_{X_{t}^{\epsilon}}\right)\mathrm{d}t+\sqrt{\epsilon}\,\sigma\left( X_{t}^{\epsilon},\mathcal{L}_{X_{t}^{\epsilon}}\right)\mathrm{d}W_{t}-A(X_{t}^{\epsilon})\,\mathrm{d}t,\\
		X^{\epsilon}_{0}=x\in \overline{D(A)},\ \ \ \epsilon\in (0,1].
	\end{cases}
\end{equation}

Denote by $(X^{\epsilon}_{t}, K^{\epsilon}_{t})$ the solution of equation (\ref{spmve}). The main purpose of this paper is to establish the large and moderate deviation principle for the solution $X^{\epsilon}_{t}$ of equation (\ref{spmve}), in the space $\mathbb{S}:=C([0,T],\overline{D(A)})$, i.e. the asymptotic estimates of the probabilities $P(X_{t}^{\epsilon}\in B),\ B\in\mathcal{B}(\mathbb{S})$. Large deviation can provide an exponential estimate for tail probability (or the probability of a rare event) in terms of some explicit rate function. In the case of stochastic processes, the heuristics underlying large deviation theory is to identify a deterministic path around which the diffusion is concentrated with overwhelming probability, so that the stochastic motion can be seen as a small random perturbation of this deterministic path.

For the classic McKean-Vlasov SDEs, i.e. $A=0$, the large deviation principle (LDP in short) was studied by Herrmann et al. \cite{[HIP]} and Dos Reis et al. \cite{[RST]}. The approach in \cite{[HIP]} and \cite{[RST]} is to  first replace the distribution $\mathcal{L}_{X_{t}^{\epsilon}}$ of $X_t^{\epsilon}$ in the coefficients with a Dirac measure $\delta_{X^0(t)}$, and then to use discretization, approximation and exponential equivalence arguments. Recently the first author et al.
\cite{liuw} established the large and moderate deviations for McKean-Vlasov SDEs with jumps by means of the weak convergence method.

For the multivalued SDEs, i.e. the coefficients do not depend on the law of $X_t$, C\'epa \cite{cepa} established the
LDP in the one-dimensional case by contraction principle. Ren et al. \cite{zhang} established the LDP for multivalued SDEs with monotone drifts (in particular containing a class of SDEs with reflection in a convex domain) by using the weak convergence approach. A slightly more general case was considered by Ren et al. \cite{rwz}, also by the weak convergence approach.

The Freidlin–Wentzell type large and moderate deviation principles for the multivalued McKean-Vlasov SDEs seem not to be studied in the literature. Compared with the usual McKean-Vlasov SDEs and multivalued SDEs, most of the difficulties come from the presence of $\mathcal{L}_{X_t}$ in the coefficients $b$ and $\sigma$, and the finite-variation process $K_t$ respectively. One only knows that $K_t$ is continuous and could not prove any further regularity such as H\"older continuity. Therefore, the classical method of time-discretized method is almost not applicable. The weak convergence approach, developed by Dupuis and Ellis \cite{[DE]} (see also \cite{BD2019,BD2000,BDM2008,BDM2011}), is proved to be a powerful tool to establish large deviation principles for various dynamical systems, see e.g. \cite{zhang,renzhang,zhangren,rwz,hu,liuw} and the references therein. In this paper, we will use the weak convergence approach to establish the large and moderate deviation principles for the multivalued McKean-Vlasov SDEs which we have given our result of the previous part in the Master thesis \cite{zfw}. 

The rest of this paper is organized as follows. In Section 2, we recall some well-known facts about multivalued McKean-Vlasov SDEs and a criterion for Laplace principle. In Section 3, we present the main results and proofs.


\section{Preliminaries}\label{pre}

In this section, we first recall some basic notions and definitions. 

\subsection{Maximal Monotone Operators}

Let $ 2^{\mathbb{R} ^{d}} $ denote all the subsets of $ \mathbb{R} ^{d} $. A map $ A:\mathbb{R} ^{d}\rightarrow 2^{\mathbb{R} ^{d}} $ is called a multivalued operator. Denote

\begin{align*}
	D\left( A\right) &:=\{ x\in \mathbb{R} ^{d}:A\left( x\right) \neq \emptyset \} ,\\	
	Im\left( A\right) &:=\cup_{{x\in D(A)}} A\left( x\right) ,\\
	Gr\left( A\right) &:=\{ \left( x,y\right) \in \mathbb{R} ^{d}\times \mathbb{R} ^{d}:x\in \mathbb{R} ^{d},y \in A\left( x\right) \}.
\end{align*}


\begin{definition}\label{mo}
	
	(1) A multivalued operator $A$ is called \textit{monotone} if
	$$
	\langle x-x',y-y'\rangle \geq 0,\quad\forall \left( x,y\right),\left( x',y'\right) \in Gr\left( A\right).
	$$

	(2) A multivalued operator $A$ is called \textit{maximal monotone} if
	$$
	\left( x,y\right) \in Gr\left( A\right) \Leftrightarrow \langle x-x',y-y'\rangle \geq 0,\quad\forall \left( x',y'\right) \in Gr\left( A\right).
	$$
	
\end{definition}

Let $A$ be a \textit{multivalued maximal monotone operator} on $\mathbb{R}^{d}$.

It is known that Int$(D(A))$ and $\overline{D(A)}$ are convex subsets of $\mathbb{R}^{d}$ and
\begin{center} 	Int$(D(A))$=Int$(\overline{D(A)}).$ \end{center}


Given $T>0$, let
\begin{equation}\label{kd}
	\mathcal{V}=\left\{K\in C([0,T],\mathbb{R}^{d}):\, K \text{ is of finite variation and }K_{0}=0 \right\}.
\end{equation}
For any $K\in \mathcal{V}$ and $t\in [0,T]$, denote the variation of $K$ on $[0,t]$ by $|K|_{0}^{t}$ and  $|K|_{TV}:=|K|_{0}^{T}$. We introduce the following set of functions pair:
\begin{align*}
	\mathcal{A}:=&\big\{(X,K):X\in C\left([0,T],\overline{D(A)}\right),K\in \mathcal{V}\;\text{and}
	\\
	\phantom{=\;\;}&\text{for all}\;(x,y)\in Gr(A),\langle X_{t}-x,dK_{t}-ydt \rangle\geq0 \big\}.
\end{align*}

It is known from \cite{CHM,cepa1} that for any pairs of $(X_{t},K_{t}), (\tilde{X}_{t},\tilde{K}_{t})\in \mathcal{A}$,
\begin{equation}\label{mono}
	\langle X_{t}-\tilde{X}_{t},\mathrm{d}K_{t}-\mathrm{d}\tilde{K}_{t}\rangle\geq 0.
\end{equation}

We recall the following result due to C\'epa \cite{cepa1}, which will be used in the proofs in Section 3.
\begin{proposition}\label{multi}
	Assume that Int$(D(A))\neq \emptyset$. For any $a\in$ Int$(D(A))$, there exist constants $\lambda_{1}>0$ and $\lambda_{2},\lambda_{3}\geq0$ such that for any $(X,K)\in\mathcal{A}$ and $0\leq s<t\leq T$,
	\begin{equation}
		\int_{s}^{t}\langle X_{r}-a,\mathrm{d}K_{r}\rangle_{\mathbb{R}^{d}}\geq \lambda_{1}|K|_{s}^{t}-\lambda_{2}\int_{s}^{t}|X_{r}-a|\mathrm{d}r-\lambda_{3}(t-s).
	\end{equation}
    where $|K|_{s}^{t}$ denotes the total variation of $K$ on $[s,t]$.
\end{proposition}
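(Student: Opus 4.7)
The plan is to combine the monotonicity condition that defines $\mathcal{A}$ with the local boundedness of a maximal monotone operator on the interior of its domain, and then to extract the total variation of $K$ by a partition argument that lets us choose the auxiliary ``test direction'' to align with the increments of $K$.

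First, since $a\in\mathrm{Int}(D(A))$, I would pick $\rho>0$ small enough that $\overline{B(a,\rho)}\subset\mathrm{Int}(D(A))$. A standard fact from the theory of maximal monotone operators is that such an operator is locally bounded on the interior of its domain: there exists $M>0$ such that for every unit vector $e\in\mathbb{R}^{d}$ one can select some $y_{e}\in A(a+\rho e)$ with $|y_{e}|\leq M$. I would quote this as a known result from, e.g., Br\'ezis or Barbu, rather than reprove it.

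Next, for an arbitrary unit vector $e$ and any $0\leq u<v\leq T$, I would plug the pair $(a+\rho e,\,y_{e})\in Gr(A)$ into the definition of $\mathcal{A}$ to get
\begin{equation*}
\int_{u}^{v}\langle X_{r}-(a+\rho e),\,dK_{r}-y_{e}\,dr\rangle\geq 0,
\end{equation*}
and, after rearranging and using Cauchy--Schwarz together with $|y_{e}|\leq M$,
\begin{equation*}
\int_{u}^{v}\langle X_{r}-a,\,dK_{r}\rangle\;\geq\;\rho\langle e,\,K_{v}-K_{u}\rangle-M\int_{u}^{v}|X_{r}-a|\,dr-\rho M(v-u).
\end{equation*}

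The final step is to turn $\langle e,K_{v}-K_{u}\rangle$ into $|K|_{u}^{v}$. I would fix a partition $s=t_{0}<t_{1}<\cdots<t_{n}=t$ and, on each subinterval where $K_{t_{i+1}}\neq K_{t_{i}}$, apply the inequality above with the specific choice $e_{i}:=(K_{t_{i+1}}-K_{t_{i}})/|K_{t_{i+1}}-K_{t_{i}}|$, so that $\rho\langle e_{i},K_{t_{i+1}}-K_{t_{i}}\rangle=\rho|K_{t_{i+1}}-K_{t_{i}}|$. Summing over $i$ and taking the supremum over partitions of $[s,t]$ converts the right-hand side into $\rho|K|_{s}^{t}$, yielding the proposition with $\lambda_{1}=\rho$, $\lambda_{2}=M$, and $\lambda_{3}=\rho M$.

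The only subtle point, and the main obstacle, is the uniform-in-direction local boundedness: the constant $M$ must be chosen independently of the unit vector $e$. This is exactly the content of the local boundedness theorem for maximal monotone operators on $\mathrm{Int}(D(A))$, which I would invoke as a black box; without it, the partition/supremum step would break down because the right-hand side would no longer be controllable by a single constant.
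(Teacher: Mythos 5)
Your argument is correct and is essentially the standard proof of this lemma from C\'epa's paper (the present paper only cites the result and gives no proof): testing the defining inequality of $\mathcal{A}$ against the points $a+\rho e\in Gr(A)$ with a uniform bound $M$ on $A$ over $\overline{B(a,\rho)}$ (Rockafellar's local boundedness at interior points, made uniform by compactness of the sphere), and then recovering $|K|_{s}^{t}$ by aligning $e$ with the increments of $K$ over a partition. The constants $\lambda_{1}=\rho$, $\lambda_{2}=M$, $\lambda_{3}=\rho M$ you obtain are exactly those of the cited result, so nothing further is needed.
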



\subsection{Solutions to multivalued Mckean-Vlasov stochastic differential equations}

\begin{proposition}\label{solu}
	Assume that there exists a stochastic basis $S:=\left(\Omega,\mathcal{F},P;(\mathcal{F}_{t})_{t\geq0}\right)$ and a d-dimensional standard Brownian motion $(W_{t})_{t\geq0}$, then a  pair of continuous and ($\mathcal{F}_{t}$)-adapted processes $\left(X,K\right)$ defined on $S$ is called the weak solution to (\ref{mve}) if
	\begin{enumerate} [(i)]
		\item $X_{0}$ has the initial law $\mathcal{L}_{X_{0}}$, and $\left(X_{.}(\omega),K_{.}(\omega)\right)\in \mathcal{A}$ for $P$-almost all $\omega\in\Omega$.
		\item It holds that $P$-almost surely
		$$\int_{0}^{t}\left(|b(X_{s},\mathcal{L}_{X_{s}})|+\lVert\sigma(X_{s},\mathcal{L}_{X_{s}})\rVert^{2}\right)\mathrm{d}s <+\infty$$
		\item
		$$X_{t}=X_{0}-K_{t}+\int_{0}^{t}b(X_{s},\mathcal{L}_{X_{s}})\mathrm{d}s+\int_{0}^{t}\sigma(X_{s},\mathcal{L}_{X_{s}})\mathrm{d}W_{s}, \quad \forall t\in[0,T],$$
		where $\mathcal{L}_{X_{s}}=P\circ X_{s}^{-1}.$
	\end{enumerate}
\end{proposition}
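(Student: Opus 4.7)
Proposition \ref{solu} is formulated as a definition rather than as an assertion: it records what is meant by a \emph{weak solution} of the multivalued McKean-Vlasov SDE (\ref{mve}), combining the admissibility constraint $(X_\cdot,K_\cdot)\in\mathcal{A}$, the local integrability requirement on the coefficients along the path, and the integral form of the equation. In this sense the only thing to verify is internal consistency, which is immediate: the drift integral on the right-hand side of (iii) is Bochner integrable by (ii), the stochastic integral is well defined since $\sigma(X_s,\mathcal{L}_{X_s})$ is square-integrable in time and $\mathcal{F}_s$-adapted, and $K\in\mathcal{V}$ so the pathwise identity makes sense and encodes the multivalued inclusion through membership of $(X,K)$ in $\mathcal{A}$.

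For the substantive existence result that underlies this definition (due to Chi \cite{CHM} under global Lipschitz conditions on $b$ and $\sigma$ in both the spatial and measure arguments), the plan is a Picard iteration on the Wasserstein space $C([0,T],\mathcal{P}_2(\mathbb{R}^d))$. Fix a measure flow $\mu=(\mu_t)_{t\in[0,T]}$, freeze the law argument, and solve the classical multivalued SDE $\mathrm{d}Y_t\in b(Y_t,\mu_t)\mathrm{d}t+\sigma(Y_t,\mu_t)\mathrm{d}W_t-A(Y_t)\mathrm{d}t$ using C\'epa's theorem \cite{cepa1}; Proposition \ref{multi} then controls the total variation $|K^\mu|_{TV}$ in terms of $|Y^\mu|$. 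Define the map $\Phi(\mu):=(\mathcal{L}_{Y^\mu_t})_{t\in[0,T]}$. Combining It\^o's formula applied to $|Y^\mu_t-Y^\nu_t|^2$, the monotonicity inequality (\ref{mono}) (which forces the $\mathrm{d}K^\mu_t-\mathrm{d}K^\nu_t$ contribution to have a favourable sign), and the Lipschitz dependence of $b$ and $\sigma$ on the measure variable, one obtains a contraction estimate of the form $\mathcal{W}_2^2(\Phi(\mu)_t,\Phi(\nu)_t)\leq C\int_0^t\mathcal{W}_2^2(\mu_s,\nu_s)\mathrm{d}s$. A Banach fixed-point argument on a suitably weighted metric then produces the pair $(X,K)$.

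The main obstacle in such an argument is exactly the presence of the maximal monotone operator $A$. Because $K$ is only of finite variation, with no H\"older or modulus-of-continuity estimate available, the $\mathrm{d}K_t$ increments cannot be bounded pathwise when comparing two iterates $Y^\mu$ and $Y^\nu$. The only available handle is the monotonicity relation $\langle X-\tilde X,\mathrm{d}K-\mathrm{d}\tilde K\rangle\geq 0$ from (\ref{mono}), which must be used to discard the multivalued contribution as a nonnegative term and isolate the genuine $L^2$ dependence of the right-hand side on the driving measure. This is the point at which the argument departs from the standard classical McKean-Vlasov proof and becomes specific to the multivalued setting; in the later deviation analysis of the present paper, the same difficulty reappears whenever $K^\epsilon$ has to be controlled uniformly in $\epsilon$.
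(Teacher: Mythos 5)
You correctly observe that Proposition \ref{solu} is a definition (of a weak solution) rather than a theorem, and indeed the paper gives no proof of it; the only thing one could check is internal consistency, which you do. Your further sketch of the existence argument via Picard iteration on $C([0,T],\mathcal{P}_2(\mathbb{R}^d))$, freezing the measure flow and invoking C\'epa's theorem together with the monotonicity relation (\ref{mono}) to kill the $\mathrm{d}K$ increments, is a fair summary of the strategy in \cite{CHM} that justifies Proposition \ref{strongsolu}, but it is tangential to the statement at hand and not something the paper proves here either.
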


\begin{definition}\label{lawuni}
(Law Uniqueness) Let $ (S,W,(X,K)) $ and $ (\widetilde{S};\widetilde{W},(\widetilde{X},\widetilde{K})) $ be two weak solutions with the same initial law $ \mathcal{L}_{X_{0}}=\mathcal{L}_{\widetilde{X}_{0}} $. The law uniqueness holds for (\ref{mve}) if $ (X,K) $ and $ (\widetilde{X},\widetilde{K}) $ have the same law.
\end{definition}

\begin{definition}\label{pathuni}
	(Pathwise Uniqueness) Let $ (S,W,(X,K)) $ and $ (S,W,(\widetilde{X},\widetilde{K})) $ be two strong solutions with the same initial law $ X_{0}=\widetilde{X}_{0} $. The pathwise uniqueness holds for (\ref{mve}) if for all $ t\in[0,1] $, $ (X_{t},K_{t})=(\widetilde{X}_{t},\widetilde{K}_{t}) $.
\end{definition}

Let $\mathcal{P}_2(\mathbb{R}^d)$ be the set of all probability measures on $(\mathbb{R}^d,\mathcal{B}(\mathbb{R}^d))$, with finite second moments. $\mathcal{P}_2(\mathbb{R}^d)$ is a Polish space equipped with $L^2$ Wasserstein distance defined by
$$\mathbb{W}_{2}(\mu,\nu):=\inf\limits_{\xi\in \Pi(\mu_{1},\mu_{2})}\left(\iint_{\mathbb{R}^d\times\mathbb{R}^d}|x-y|^{2} \xi(\mathrm{d}x,\mathrm{d}y)\right)^{\frac{1}{2}},\ \ \forall \mu,\ \nu\in \mathcal{P}_2(\mathbb{R}^d)$$
where $\Pi(\mu,\nu)$ is the set of all couplings of $\mu,\nu$, i.e. the set of all probability measures on $\mathbb{R}^d\times\mathbb{R}^d$ with marginal distributions $\mu$ and $\nu$. It is well known that $\mathbb{W}_{2}$ is a complete metric on $\mathcal{P}_2(\mathbb{R}^d)$.

\begin{remark}\label{W2}
	From the definition above we know that for any $\mathbb{R}^{d}$-valued random variables $X$ and $Y$,
	$$\mathbb{W}_{2}\left(\mathcal{L}_{X},\mathcal{L}_{Y}\right)\leq \left[\mathbb{E}(X-Y)^{2}\right]^{\frac{1}{2}},$$ which will be often used in the proofs later.
\end{remark}

Next we represent the result about the existence and uniqueness of strong solution in \cite{CHM}.

\begin{proposition}\label{strongsolu}
	(\textbf{Strong solution}) Assume that Int$(D(A))\neq \emptyset$, $b:\mathbb{R}^{d}\times\mathcal{P}(\mathbb{R}^d)\rightarrow\mathbb{R}^{d}$ and $\sigma:\mathbb{R}^{d}\times\mathcal{P}(\mathbb{R}^d)\rightarrow\mathbb{R}^{d}\times\mathbb{R}^{d}$ are global Lipschitz continuous, i.e. there exist some positive constants $L_{b}$ and $L_{\sigma}$ such that for any $x,x'\in\mathbb{R}^d$ and $\mu,\nu\in \mathcal{P}(\mathbb{R}^d)$,
	$$|b(x,\mu)-b(x',\nu)|\leq L_{b}(|x-x'|+\mathbb{W}_{2}(\mu,\nu)),$$
	and
	$$\lVert\sigma(x,\mu)-\sigma(x',\nu)\rVert\leq L_{\sigma}(|x-x'|+\mathbb{W}_{2}(\mu,\nu)).$$
	Then for any $\mathcal{F}_{0}$-measurable random variable $X_{0}$ with $\mathbb{E}|X_{0}|^{2}<+\infty$, there exists a unique $\mathcal{F}_{t}$-adapted solution $(X,K)\in \mathcal{A}$ to equation (\ref{mve}) so that
	$$X_{t}=X_{0}-K_{t}+\int_{0}^{t}b(X_{s},\mathcal{L}_{X_{s}})\mathrm{d}s+\int_{0}^{t}\sigma(X_{s},\mathcal{L}_{X_{s}})\mathrm{d}W_{s}, \quad \mathcal{L}_{X_{s}}=P\circ X_{s}^{-1}.$$
\end{proposition}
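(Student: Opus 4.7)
The plan is to construct the solution of (\ref{mve}) by a Picard-type fixed point argument on the complete metric space
$$\mathcal{H}_T:=\Big\{Y:\Omega\times[0,T]\to\mathbb{R}^d,\ (\mathcal{F}_t)\text{-adapted and continuous, with }\mathbb{E}\big[\sup\nolimits_{t\le T}|Y_t|^2\big]<\infty\Big\},$$
endowed with a weighted norm to be chosen below.

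First, I would \emph{decouple} the measure dependence. Given $Y\in\mathcal{H}_T$, define the frozen coefficients $\bar b(t,x):=b(x,\mathcal{L}_{Y_t})$ and $\bar\sigma(t,x):=\sigma(x,\mathcal{L}_{Y_t})$. The Lipschitz hypothesis on $b,\sigma$ makes $\bar b,\bar\sigma$ globally Lipschitz in $x$ uniformly in $t$, and measurable in $t$ (since $t\mapsto\mathcal{L}_{Y_t}$ is continuous in $\mathbb{W}_2$ whenever $Y\in\mathcal{H}_T$). The classical multivalued SDE theory of C\'epa \cite{cepa1} (extended in a routine way to time-dependent Lipschitz coefficients, which is what Chi \cite{CHM} uses) then yields a unique strong solution $(Z,K)\in\mathcal{A}$ of
$$Z_t=X_0+\int_0^t\bar b(s,Z_s)\,\mathrm{d}s+\int_0^t\bar\sigma(s,Z_s)\,\mathrm{d}W_s-K_t,\quad t\in[0,T].$$
Applying Proposition \ref{multi} with a fixed $a\in\mathrm{Int}(D(A))$ to control $|K|_0^t$, together with It\^o's formula and Gronwall's inequality, I expect an a priori bound $\mathbb{E}[\sup_{t\le T}|Z_t|^2]\le C(1+\mathbb{E}|X_0|^2)$, so that $\Phi:Y\mapsto Z$ maps $\mathcal{H}_T$ into itself.

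The key step is to show that $\Phi$ is a contraction. For $Y,\tilde Y\in\mathcal{H}_T$ with images $(Z,K),(\tilde Z,\tilde K)$, apply It\^o's formula to $|Z_t-\tilde Z_t|^2$; the monotonicity relation (\ref{mono}) gives $\langle Z_s-\tilde Z_s,\mathrm{d}K_s-\mathrm{d}\tilde K_s\rangle\ge 0$, so the $K$-contribution drops out with a favourable sign. Using the Lipschitz bounds on $b,\sigma$ together with Remark \ref{W2} to control $\mathbb{W}_2(\mathcal{L}_{Y_s},\mathcal{L}_{\tilde Y_s})^2\le \mathbb{E}|Y_s-\tilde Y_s|^2$, and Burkholder--Davis--Gundy on the martingale term, I expect to reach
$$\mathbb{E}\Big[\sup_{s\le t}|Z_s-\tilde Z_s|^2\Big]\le C\int_0^t \mathbb{E}\Big[\sup_{r\le s}|Y_r-\tilde Y_r|^2\Big]\,\mathrm{d}s.$$
Iterating this inequality, or equivalently passing to the weighted norm $\|Y\|_\lambda^2:=\sup_{t\le T}e^{-\lambda t}\mathbb{E}[\sup_{s\le t}|Y_s|^2]$ and taking $\lambda$ large, turns $\Phi$ into a strict contraction on $\mathcal{H}_T$. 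Banach's fixed point theorem then produces a unique $X\in\mathcal{H}_T$ with $\Phi(X)=X$, which solves (\ref{mve}); the associated $K$ is reconstructed from the equation.

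Pathwise uniqueness follows from the same It\^o-plus-monotonicity computation applied directly to two solutions $(X,K),(\tilde X,\tilde K)$ of (\ref{mve}) with identical initial data: Remark \ref{W2} now lets the Wasserstein term be absorbed into $\mathbb{E}|X_s-\tilde X_s|^2$ itself, closing the Gronwall loop and forcing $X\equiv\tilde X$, hence $K\equiv\tilde K$. The main obstacle I anticipate is handling the multivalued drift: no contraction-friendly a priori control of $K_t$ or $|K|_0^t$ is available, so the argument must scrupulously avoid any norm of $K-\tilde K$ and rely exclusively on (\ref{mono}) to cancel that term. A secondary technicality is the a priori $L^2$ bound for the frozen SDE, where Proposition \ref{multi} yields $|K|_0^t$ only in terms of $\int_0^t|Z_r-a|\,\mathrm{d}r$, so the estimate must be closed by an iterated It\^o/Gronwall argument on $|Z_t-a|^2$.
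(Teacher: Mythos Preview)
The paper does not prove this proposition at all: it is stated in the preliminaries as a known result, introduced with ``Next we represent the result about the existence and uniqueness of strong solution in \cite{CHM}.'' There is no proof in the paper to compare against.

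Your Picard fixed-point sketch is a reasonable and essentially correct outline of how such a result is established, and it is in the spirit of Chi's original argument in \cite{CHM}: freeze the law to reduce to C\'epa's multivalued SDE theory, then close the loop by a contraction on the space of square-integrable adapted processes, using the monotonicity relation (\ref{mono}) to kill the $K$-difference. The two potential obstacles you flag (avoiding any norm of $K-\tilde K$, and closing the a priori bound via Proposition \ref{multi}) are exactly the right ones, and your plan handles them. But since the paper simply cites the result, there is no methodological comparison to make here.
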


\subsection{Large Deviation Principle}\label{ldp}

 With the fundamental result about the existence and uniqueness of strong solution to multivalued stochasitic Mckean-Vlasov equation, now we are able to state precisely the goal of this paper. First we recall an abstract criterion for Laplace principle, which is equivalent to the LDP with the same rate function through the weak convergence method (see \cite{BD2000,BDM2008,zhang}).

It is well known that there exists a Hilbert space $\mathbb{U}$ so that $l^{2}\subset \mathbb{U}$ is Hilbert-Schmidt with embedding operator J and $\left\{W_{t}^{k},k\in \mathbb{N}\right\}$ is a Brownian motion with values in $\mathbb{U}$ and its covariance operator is given by $Q=J\circ J^{*}$. For example one can take $U$ as the completion of $l^{2}$ with respect to the norm generated by scalar product
$$\left\langle f,f'\right\rangle_{\mathbb{U}}:=(\sum_{k=1}^{\infty}\frac{f_{k}f'_{k}}{k^{2}})^{\frac{1}{2}},\quad f,f'\in l^{2}.$$

Define
\begin{equation}
	\ell_{T}^{2}:=\left\{\int_{0}^{.}f(s)\mathrm{d}s:f\in L^{2}([0,T],l^{2})\right\}.
\end{equation}
with the norm $$\lVert f\rVert_{\ell_{T}^{2}}:=\left(\int_{0}^{T}\lVert f(s)\rVert_{l^{2}}^{2}\right)^{\frac{1}{2}},$$

For each $m,T\geq0$, define
\begin{equation}
	\mathcal{S}_{m}:=\left\{f\in L^{2}([0,T],l^{2}):\lVert f\rVert_{\ell_{T}^{2}}^{2}\leq m\right\}.
\end{equation}
$\mathcal{S}_{m}$ will be equipped with the topology of weak convergence in $L^{2}([0,T],l^{2})$, then $S_{m}$ is a  compact subset of $L^{2}([0,T],l^{2})$.

Set $\mathcal{S}=\cup_{{m\in(0,\infty)}}\mathcal{S}_{m}$ and
	\begin{align*}
		\mathcal{D}_{m}^{T}:=\bigg\{&F=\int_{0}^{.}f(s)\mathrm{d}s:[0,T]\rightarrow l^{2} \text{ is a continuous and }(\mathcal{F}_{t})\\
		&\text{-adapted process, and }f(\cdot,\omega)\in\mathcal{S}_{m}\text{ for almost all }\omega\bigg\}.
	\end{align*}
	
let $\mathbb{S}$ be a Polish space, we denote by $\mathcal{B}(\mathbb{S})$ the Borel $\sigma$-field, and by $C([0,T],\mathbb{U})$ the continuous function space from $[0,T]$ to $\mathbb{U}$, and endow it with the uniform distance so that $C([0,T],\mathbb{U})$ is also a Polish space. Let $\mu$ be the law of the Brownian motion $W$ in $C([0,T],\mathbb{U})$, then $(\mathcal{S}_{m},C([0,T],\mathbb{U}),\mu)$ forms an abstract Wiener space.

Let us recall the definition of a rate function and LDP.

\begin{definition} [Rate function]
	A function $I:\mathbb{S}\rightarrow[0,\infty]$ is called a rate function if for every $a<\infty$, the level set $\left\{g\in\mathbb{S}:I(g)\leq a\right\}$ is a closed subset of $\mathbb{S}$, and it is a good rate function if all the level sets defined above are compact subsets of $\mathbb{S}$.
\end{definition}

\begin{definition}[Large deviation principle] Let $I$ be a rate function on $\mathbb{S}$. Given a collection $\{\hbar(\epsilon)\}_{\epsilon>0}$ of
positive reals, a family $\left\{X^{\epsilon}\right\}_{\epsilon>0}$ of $\mathbb{S}$-valued random elements is said to satisfy a LDP on $\mathbb{S}$ with speed $\hbar(\epsilon)$ and rate function $I$ if the following two claims hold.
\begin{itemize}
  \item[(a)]
 (Upper bound) For each closed subset $C$ of $\mathbb{S},$
$$
\limsup_{\epsilon \rightarrow 0} \hbar(\epsilon) \log {P}\left(X^{\epsilon} \in C\right) \leq - \inf_{x \in C} I(x).
$$
 \item[(b)] (Lower bound) For each open subset $O$ of $\mathbb{S},$
$$
\liminf_{\epsilon \rightarrow 0} \hbar(\epsilon) \log {P}\left(X^{\epsilon} \in O\right) \geq - \inf_{x \in O} I(x).
$$
\end{itemize}
\end{definition}

Let $\left\{\mathcal{G}^{\epsilon}\right\}_{\epsilon>0}$ be a family of measurable maps from $C([0,T],\mathbb{U})$ to $\mathbb{S}$, and $X^\epsilon=\mathcal{G}^{\epsilon}(\sqrt{\epsilon}W)$. Next we introduce the following conditions which will be sufficient to establish the large and moderate deviation principles for $\left\{X^{\epsilon}\right\}_{\epsilon>0}$.

\begin{assumption}
	There exists a measurable map $\mathcal{G}^{0}:\ell_{T}^{2}\mapsto \mathbb{S}$ such that the following hold.
	\begin{enumerate}
		\item[(\textbf{LD}$_{\bm{1}}$)] For $m\in(0,\infty)$, let $\left\{u_{n},n\in\mathbb{N}\right\},u\in \mathcal{S}_{m}$ be such that $u_{n}\rightarrow u$ weakly as $n\rightarrow\infty$. Then
		$$\mathcal{G}^{0}\left(\int_{0}^{\cdot}u_{n}(s)\mathrm{d}s\right)\rightarrow\mathcal{G}^{0}\left(\int_{0}^{\cdot}u(s)\mathrm{d}s\right)\quad\quad \text{in }\mathbb{U}.$$
		\item[(\textbf{LD}$_{\bm{2}}$)] For $m\in(0,\infty)$, let a family $\left\{u_{\epsilon},\epsilon>0\right\}\subset\mathcal{D}_{m}^{T}$ converges in distribution to $h\in\mathcal{D}_{m}^{T}$ as $ \epsilon\rightarrow 0 $. Then
		$$\mathcal{G}^{\epsilon}\left(\sqrt{\epsilon}W+\int_{0}^{\cdot}u_{\epsilon}(s)\mathrm{d}s\right)\Rightarrow \mathcal{G}^{0}\left(\int_{0}^{\cdot}u(s)\mathrm{d}s\right)\quad \quad \text{in distribution}.$$
	\end{enumerate}

For each $g\in\mathbb{S}$, define $\mathbb{V}_{g}=\left\{u\in\mathcal{S}:g=\mathcal{G}^{0}\left(\int_{0}^{\cdot}u(s)\mathrm{d}s\right)\right\}$ and
\begin{equation}\label{rate}
	I(g):=\frac{1}{2}\inf\limits_{u\in\mathbb{V}_{g}}\lVert u\rVert_{\ell_{T}^{2}}^{2},
\end{equation}
By convention, $I(g)=\infty$ if $\mathbb{V}_{g}=\emptyset$. Then under (\textbf{LD}$_{\bm{2}}$), $ I(g) $ is a rate function.
\end{assumption}

We recall the following result due to \cite{BD2000} (see also \cite{BDM2008,BDM2011}).
\begin{theorem}
	 Under (\textbf{LD}$_{\bm{1}}$) and (\textbf{LD}$_{\bm{2}}$), $\left\{X^{\epsilon}\right\}_{\epsilon>0}$ satisfies the Laplace principle with  the rate function $I(g)$ given by (\ref{rate}). More precisely, for each real bounded continuous function $ f $ on $ \mathbb{S} $,
	\begin{equation}
		\lim\limits_{\epsilon\rightarrow 0}\epsilon\, \log\,\mathbb{E}^{\mu}\left(\exp[-f(X^{\epsilon})/\epsilon]\right)=-\inf\limits_{g\in\mathbb{V}_{g}}\left\{f(g)+I(g)\right\}.
	\end{equation}
	In particular, the family of $\left\{X^{\epsilon}\right\}_{\epsilon>0}$ satisfies the large deviation principle in $(\mathbb{S},\mathcal{B}(\mathbb{S}))$ with speed $\epsilon$ and rate function $I(g)$.
\end{theorem}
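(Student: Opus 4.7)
The plan is to invoke the Boué--Dupuis variational representation for exponential functionals of the driving Brownian motion $W$,
\begin{equation*}
-\epsilon\log\mathbb{E}\bigl[\exp(-f(X^\epsilon)/\epsilon)\bigr] \;=\; \inf_{u}\mathbb{E}\!\left[\tfrac{1}{2}\|u\|_{\ell_T^2}^2 + f\bigl(\mathcal{G}^\epsilon(\sqrt{\epsilon}\,W + \textstyle\int_0^\cdot u(s)\,\mathrm{d}s)\bigr)\right],
\end{equation*}
where the infimum runs over all $(\mathcal{F}_t)$-adapted controls $u$ with $\mathbb{E}\|u\|_{\ell_T^2}^2<\infty$. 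With this identity, proving the Laplace principle reduces to matching the right-hand side with $\inf_{g\in\mathbb{S}}\{f(g)+I(g)\}$ as $\epsilon\to 0$ via complementary upper and lower bounds; hypotheses (\textbf{LD}$_{\bm{1}}$) and (\textbf{LD}$_{\bm{2}}$) are precisely tailored to supply these two directions. Before the variational bounds, I would record that $I$ is a good rate function: this is automatic from (\textbf{LD}$_{\bm{1}}$), since each level set $\{g:I(g)\le M\}$ is the continuous image of the weakly compact set $\mathcal{S}_{2M}$ under $u\mapsto\mathcal{G}^0(\int_0^\cdot u(s)\,\mathrm{d}s)$ and hence compact in $\mathbb{S}$.

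For the Laplace upper bound (equivalent to the LDP lower bound), I would fix $g_0\in\mathbb{S}$ with $I(g_0)<\infty$ and $\delta>0$, select a deterministic $u_0\in\mathcal{S}$ with $g_0=\mathcal{G}^0(\int_0^\cdot u_0\,\mathrm{d}s)$ and $\tfrac{1}{2}\|u_0\|_{\ell_T^2}^2\le I(g_0)+\delta$, and use this sub-optimal $u_0$ as a feasible control in the variational representation. Invoking (\textbf{LD}$_{\bm{2}}$) with the constant sequence $u_\epsilon\equiv u_0$ yields $\mathcal{G}^\epsilon(\sqrt\epsilon W+\int_0^\cdot u_0\,\mathrm{d}s)\Rightarrow g_0$; boundedness and continuity of $f$ then allow passage to the limit, and minimizing over $g_0$ and sending $\delta\downarrow 0$ delivers $\limsup_\epsilon (-\epsilon\log\mathbb{E}[e^{-f(X^\epsilon)/\epsilon}])\le \inf_{g}\{f(g)+I(g)\}$.

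For the Laplace lower bound, I would pick for each $\epsilon$ an $\epsilon$-almost optimal control $u^\epsilon$ in the variational representation. Since $f$ is bounded, $\mathbb{E}\|u^\epsilon\|_{\ell_T^2}^2$ is uniformly bounded, so a standard truncation places $u^\epsilon$ inside $\mathcal{D}_m^T$ for some fixed $m$ (modulo an arbitrarily small residual error controlled by $\|f\|_\infty$). The family $\{u^\epsilon\}$ is then tight in $\mathcal{D}_m^T$ viewed with the weak $L^2$ topology, and I would extract a subsequential distributional limit $u$; (\textbf{LD}$_{\bm{2}}$) then identifies $\mathcal{G}^\epsilon(\sqrt\epsilon W+\int_0^\cdot u^\epsilon\,\mathrm{d}s)\Rightarrow \mathcal{G}^0(\int_0^\cdot u\,\mathrm{d}s)$. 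Combining Fatou's lemma with weak lower semicontinuity of $\|\cdot\|_{L^2}^2$ gives
\begin{equation*}
\liminf_{\epsilon\to 0}\bigl(-\epsilon\log\mathbb{E}[e^{-f(X^\epsilon)/\epsilon}]\bigr)\;\ge\;\mathbb{E}\!\left[\tfrac{1}{2}\|u\|_{\ell_T^2}^2 + f\bigl(\mathcal{G}^0(\textstyle\int_0^\cdot u\,\mathrm{d}s)\bigr)\right]\;\ge\;\inf_{g\in\mathbb{S}}\{f(g)+I(g)\}.
\end{equation*}

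The main technical obstacle, predictably, sits in the lower bound: propagating joint tightness of the pair $(u^\epsilon,\sqrt\epsilon W+\int_0^\cdot u^\epsilon\,\mathrm{d}s)$, identifying its weak limit through (\textbf{LD}$_{\bm{2}}$) on a common probability space after a Skorokhod representation, and then applying weak lower semicontinuity of the $L^2$ norm on the random control to bound the energy of the limit. Once both Laplace bounds are secured, the equivalence between the Laplace principle and the LDP on the Polish space $\mathbb{S}$ (valid whenever the rate function is good, as established above) yields the stated LDP with speed $\epsilon$ and rate function $I$ of the form \eqref{rate}.
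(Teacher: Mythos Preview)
The paper does not actually prove this theorem: it is stated as a recalled result, attributed to Budhiraja--Dupuis \cite{BD2000} (with the remark ``see also \cite{BDM2008,BDM2011}''), and no argument is given. So there is no ``paper's own proof'' to compare against.

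That said, your outline is exactly the standard Budhiraja--Dupuis argument from those references: the Bou\'e--Dupuis variational formula, a deterministic near-optimal control for the Laplace upper bound via (\textbf{LD}$_{\bm{2}}$), and near-optimal random controls plus tightness/weak lower semicontinuity for the lower bound, with goodness of $I$ coming from (\textbf{LD}$_{\bm{1}}$). The sketch is correct in spirit; the only places that need care in a full write-up are (i) the infinite-dimensional version of the variational representation (the cylindrical $\mathbb{U}$-valued Wiener process setting, handled in \cite{BD2000,BDM2008}), and (ii) the truncation step that reduces to controls in $\mathcal{D}_m^T$, where one must quantify the error in terms of $\|f\|_\infty$ uniformly in $\epsilon$. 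Both are routine and done in the cited papers.
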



\section{Main Results and Proofs}\label{res}

Let $\epsilon>0$. For measurable maps$$b_{\epsilon}: \overline{D(A)} \times \mathcal{P}_{2} \rightarrow \mathbb{R}^{d},\quad \sigma_{\epsilon}: \overline{D(A)} \times \mathcal{P}_{2} \rightarrow \mathbb{R}^{d}\otimes \mathbb{R}^{d},$$
and $K_{t}^{\epsilon}\in \mathcal{V}$, 
consider the following small perturbation of multivalued Mckean-Vlasov SDEs:
\begin{equation}\label{per}
	\left\{
	\begin{array}{lr}
		X_{t}^{\epsilon} = h + \int_{0}^{t} b_{\epsilon}\left(X_{s}^{\epsilon},\mathcal{L}_{X_{s}^{\epsilon}} \right)\,\mathrm{d}s + \sqrt{\epsilon}\int_{0}^{t} \sigma_{\epsilon}\left(X_{s}^{\epsilon},\mathcal{L}_{X_{s}^{\epsilon}} \right)\,\mathrm{d}W_{s} - K_{t}^{\epsilon}, \\
		X_{0}^{\epsilon} = h \in\overline{D(A)},\quad \epsilon \in \left(0,1\right).
	\end{array}
	\right.
\end{equation}

\begin{assumption}
	We assume that
	
\textbf{(H0)}
	$ A $ is a mutivalued monotone operator with nonempty interior, i.e., Int$ D(A)\neq\emptyset $;
	
	\textbf{(H1)} $b$ and $\sigma$ are measurable functions such that for some $L>0$ and all $x,x'\in\mathbb{R}^d$ and $\mu,\nu\in \mathcal{P}(\mathbb{R}^d)$,
	\begin{align*}\label{h1}
		&\langle x-x',b(x,\mu)-b(x',\mu)\rangle\leq L\lvert x-x'\rvert^{2}, \\
		&\lvert b(x,\mu)-b(x,\mu')\rvert\leq L\mathbb{W}_{2}(\mu,\mu'), \\
		&\lvert b(x,\mu)-b(x',\mu)\rvert\leq L\left(1+|x|^{q}+|x'|^{q}\right)\lvert x-x'\rvert, \\
		&\lVert \sigma(x,\mu)-\sigma(x',\mu')\rVert_{\mathcal{L}_{2}}\leq L\left(\lvert x-x'\rvert+\mathbb{W}_{2}(\mu,\mu')\right), \\
		&\int_{0}^{T}\left(|b(0,\delta_{0})|+\lVert\sigma(0,\delta_{0})\rVert^{2}\right)\,\mathrm{d}t<\infty;
	\end{align*}
	
	\textbf{(H2)}
	As $\epsilon\rightarrow0$, the maps $b_{\epsilon}$ and $\sigma_{\epsilon}$ converge uniformly to $b$ and $\sigma$ respectively, in other words: there exist nonnegative constants $\rho_{b,\epsilon}$ and $\rho_{\sigma,\epsilon}$ converging to 0 as $\epsilon\rightarrow0$ such that
	\begin{align*}\label{h2}
		&\sup\limits_{(x,\mu)\in \mathbb{R}^{d}\times\mathcal{P}_{2}}\left(\lvert b_{\epsilon}(x,\mu)-b(x,\mu)\rvert\right)\leq \rho_{b,\epsilon}\,, \\
		&\sup\limits_{(x,\mu)\in \mathbb{R}^{d}\times\mathcal{P}_{2}}\left(\lVert \sigma_{\epsilon}(x,\mu)-\sigma(x,\mu)\rVert\right)\leq \rho_{\sigma,\epsilon}\,;
	\end{align*}
	
	\textbf{(H3)}\label{h3}
	For any $\epsilon> 0$, there exist a solution to (\ref{per}) as stated in Definition \ref{solu};
	
	\textbf{(H4)}
	Pathwise uniqueness holds for (\ref{per}) with fixed $\mathcal{L}_{X^{\epsilon}}$, i.e. if for any two solutions $(X_{1},\mathcal{L}_{X^{\epsilon}},K^{\epsilon})$ and $(X_{2},\mathcal{L}_{X^{\epsilon}},K^{\epsilon})$ of (\ref{per}),
	$$X_{1}(t)=X_{2}(t),\quad t\in[0,T],\,P\text{-}a.s.$$
	
\end{assumption}

\begin{proposition}
	Assume that (\hyperref[h1]{H1}) holds. There exists a unique function $X^{0}=\left\{(X^{0}(t),K^0(t)),t\in[0,T]\right\}$ such that
	\begin{enumerate}
		\item $X^{0}\in C\big([0,T],\overline{D(A)}\big)$,
		\item $\int_{0}^{T}\lvert b(X_{s}^{0},\mathcal{L}_{X_{s}^{0}})\rvert\mathrm{d}s<\infty,\,K_{t}^{0}\in \mathcal{V},\, \forall t\in[0,T]$.
		\item $X^{0}$ satisfies
		\begin{equation}\label{x0}
			X_{t}^{0}=h+\int_{0}^{t}b(X_{s}^{0},\mathcal{L}_{X_{s}^{0}})\mathrm{d}s-K_{t}^{0},\quad\forall t\in[0,T].
		\end{equation}
	\end{enumerate}
And $(X^{0}(t),K^0(t))$ is called the solution to (\ref{x0}).
\end{proposition}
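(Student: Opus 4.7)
Since $X^{0}$ is deterministic, $\mathcal{L}_{X_{s}^{0}}$ coincides with the Dirac mass $\delta_{X_{s}^{0}}$, so (\ref{x0}) collapses to the multivalued deterministic equation
$$X_{t}^{0}=h+\int_{0}^{t}\tilde{b}(X_{s}^{0})\,\mathrm{d}s-K_{t}^{0},\qquad\tilde{b}(x):=b(x,\delta_{x}).$$
My plan is to treat this as a single-valued driven multivalued ODE with drift $\tilde{b}$. First I would verify that $\tilde{b}$ inherits from (H1) the usable structural estimates. Using $\mathbb{W}_{2}(\delta_{x},\delta_{y})\leq |x-y|$ and splitting $b(x,\delta_{x})-b(y,\delta_{y})$ through the intermediate point $b(y,\delta_{x})$, one obtains the one-sided Lipschitz bound $\langle x-y,\tilde{b}(x)-\tilde{b}(y)\rangle\leq 2L|x-y|^{2}$, together with polynomial local Lipschitz continuity $|\tilde{b}(x)-\tilde{b}(y)|\leq L(2+|x|^{q}+|y|^{q})|x-y|$ and linear growth of $|\tilde{b}(\cdot)|$.

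For uniqueness I would take two solutions $(X^{1},K^{1}),(X^{2},K^{2})\in\mathcal{A}$ starting at the same $h$, subtract, and apply the chain rule for continuous finite-variation paths. The monotonicity inequality (\ref{mono}) kills the $K$-contribution, leaving
$$\tfrac{\mathrm{d}}{\mathrm{d}t}|X_{t}^{1}-X_{t}^{2}|^{2}\leq 2\langle X_{t}^{1}-X_{t}^{2},\tilde{b}(X_{t}^{1})-\tilde{b}(X_{t}^{2})\rangle\leq 4L|X_{t}^{1}-X_{t}^{2}|^{2},$$
and Gronwall forces $X^{1}\equiv X^{2}$; the equation itself then gives $K^{1}\equiv K^{2}$.

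For existence I would run a Picard-type iteration in the measure argument. Set $\mu^{(0)}_{s}\equiv\delta_{h}$, and given $\mu^{(n)}$, solve the multivalued deterministic equation with drift $(s,x)\mapsto b(x,\mu^{(n)}_{s})$, which is Lipschitz in $x$, to produce $(X^{(n+1)},K^{(n+1)})\in\mathcal{A}$ via the classical theory of C\'epa \cite{cepa1} (equivalently Proposition \ref{strongsolu} specialized to $\sigma\equiv 0$ with a frozen law). Put $\mu^{(n+1)}_{s}:=\delta_{X^{(n+1)}_{s}}$. A uniform a priori bound on $\sup_{t\leq T}|X^{(n)}_{t}|$ follows by testing the equation against $X^{(n)}-a$ for some $a\in\mathrm{Int}(D(A))$ and invoking Proposition \ref{multi}, which converts the inner-product term $\int_{0}^{t}\langle X^{(n)}_{s}-a,\mathrm{d}K^{(n)}_{s}\rangle$ into a nonnegative bracket modulo a Gronwall-type remainder. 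Using Remark \ref{W2} together with the one-sided Lipschitz bound I would then derive
$$|X^{(n+1)}_{t}-X^{(n)}_{t}|^{2}\leq C\int_{0}^{t}|X^{(n+1)}_{s}-X^{(n)}_{s}|^{2}\,\mathrm{d}s+C\int_{0}^{t}|X^{(n)}_{s}-X^{(n-1)}_{s}|^{2}\,\mathrm{d}s,$$
a standard Picard step whose iteration yields a Cauchy sequence in $C([0,T],\overline{D(A)})$; the limit $X^{0}$ together with the associated $K^{0}$, obtained as the uniform limit of $K^{(n)}$, solves (\ref{x0}).

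The main obstacle I expect is controlling $K^{(n)}$ along the iteration: a priori one only has continuity and finite variation, with no H\"older regularity, so compactness in $\mathcal{V}$ is delicate. I would therefore lean on Proposition \ref{multi} at every step to turn the $\langle X^{(n)}-a,\mathrm{d}K^{(n)}\rangle$ term into a genuine bound on $|K^{(n)}|_{TV}$, independent of $n$, and then recover the convergence of $K^{(n)}$ by writing $K^{(n)}_{t}=h-X^{(n)}_{t}+\int_{0}^{t}b(X^{(n)}_{s},\mu^{(n-1)}_{s})\,\mathrm{d}s$ and passing to the limit in each summand. Once this is in place the Picard argument closes and the proposition follows.
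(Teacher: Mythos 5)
The paper states this proposition without any proof (the existence theory is implicitly deferred to C\'epa \cite{cepa1} and Chi \cite{CHM}, and the analogous controlled equation in Proposition \ref{Yu} is likewise only sketched), so there is nothing in the text to compare your argument against line by line. Your overall strategy --- reduce to the deterministic multivalued ODE with drift $\tilde{b}(x)=b(x,\delta_{x})$, prove uniqueness by the chain rule plus the monotonicity inequality (\ref{mono}) plus Gronwall, and prove existence by a Picard iteration on the measure argument with the total-variation control of $K^{(n)}$ supplied by Proposition \ref{multi} --- is sound and is essentially the standard route in this literature. The one-sided Lipschitz constant $2L$ for $\tilde{b}$ and the Picard inequality $|X^{(n+1)}_{t}-X^{(n)}_{t}|^{2}\leq C\int_{0}^{t}|X^{(n+1)}_{s}-X^{(n)}_{s}|^{2}\,\mathrm{d}s+C\int_{0}^{t}|X^{(n)}_{s}-X^{(n-1)}_{s}|^{2}\,\mathrm{d}s$ are both correctly derived from (H1) and Remark \ref{W2}.

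Three points need repair or elaboration. First, the frozen-law drift $x\mapsto b(x,\mu^{(n)}_{s})$ is \emph{not} globally Lipschitz under (H1): the third line of (H1) only gives local Lipschitz continuity with a polynomially growing constant, supplemented by the one-sided bound in the first line. So neither C\'epa's Lipschitz theory nor Proposition \ref{strongsolu} applies verbatim to the frozen equation; you must invoke the monotone-drift versions (Zhang \cite{ZX}, or Ren--Xu--Zhang \cite{zhang}) or insert a truncation argument. Relatedly, $\tilde{b}$ has polynomial growth of order $q+1$, not linear growth; what your a priori estimate actually uses is the one-sided inequality $\langle x-a,\tilde{b}(x)\rangle\leq C(1+|x-a|^{2})$, which does follow from (H1), so the estimate survives but the stated property does not. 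Second, the uniform-in-$n$ bound on $\sup_{t\leq T}|X^{(n)}_{t}|$ is not immediate from a single application of Gronwall, because the measure argument couples $X^{(n)}$ to $X^{(n-1)}$: you get a recursion of the form $a_{n}(t)\leq A+B\int_{0}^{t}a_{n-1}(s)\,\mathrm{d}s$ after Gronwall, which does close to a bound independent of $n$ (the iterated integrals produce a convergent factorial series), but this step should be written out. Third, having obtained uniform limits $X^{0}$ and $K^{0}$ with $\sup_{n}|K^{(n)}|_{TV}<\infty$, you still must verify that the pair $(X^{0},K^{0})$ belongs to $\mathcal{A}$, i.e.\ that $\langle X^{0}_{t}-x,\mathrm{d}K^{0}_{t}-y\,\mathrm{d}t\rangle\geq 0$ for all $(x,y)\in Gr(A)$; this follows from a standard Helly-type convergence lemma for Stieltjes integrals under uniform convergence plus uniformly bounded variation, but it is precisely the step that makes multivalued limits delicate and should be stated explicitly rather than absorbed into ``passing to the limit in each summand.''
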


\begin{remark} Since
	$\mathcal{L}_{X_{s}^{0}}=\delta_{{X}_{s}^{0}}$, we will denote $X^{0}$ by the unique solution to (\ref{x0}) for convenience.
\end{remark}

\begin{remark}\label{r0}
	(\hyperref[h1]{H1}) implies that for any $\epsilon>0$, there exists some $\rho>0$ such that for any $A\in \mathcal{B}\left([0,T]\right)$ with $Leb_{T}(A)<\rho$,
	\begin{equation}
		\int_{A}\lvert b(0,\delta_{0})\rvert\mathrm{~d}r +\int_{A}\lvert \sigma(0,\delta_{0})\rvert^{2}\mathrm{~d}r \leq \epsilon.
	\end{equation}
\end{remark}

To guarantee the Girsanov theorem, we need the following lemma which is stated in \cite[Lemma 2.21]{liuw},
\begin{lemma}\label{girs}
	Assume that $X_{t}^{\epsilon}$ is a solution of (\ref{per}) with initial value $X_{0}^{\epsilon}=h\in\overline{D(A)}$ and \textbf{(H4)} holds, then
	\begin{enumerate}
		\label{a0}\item[\textbf{(A0)}] for any $\mathcal{L}_{X^{\epsilon}}$ fixed, the maps $b_{\epsilon}(\cdot,\mathcal{L}_{X^{\epsilon}}):\mathbb{R}^{d}\rightarrow\mathbb{R}^{d}$ are $\mathcal{B}(\mathbb{R}^{d})/\mathcal{B}({\mathbb{R}^{d}})$-measurable and  $\sigma_{\epsilon}(\cdot,\mathcal{L}_{X^{\epsilon}}): \mathbb{R}^{d}\rightarrow\mathbb{R}^{d}\otimes\mathbb{R}^{d}$ are $\mathcal{B}(\mathbb{R}^{d})/\mathcal{B}({\mathbb{R}^{d}\otimes \mathbb{R}^{d}})$-measurable;
		\label{a1}\item[\textbf{(A1)}] (\ref{per}) has a unique solution $X^{\epsilon}$ as stated in Definition \ref{solu}.
	\end{enumerate}
Then there exists a map $\mathcal{G}_{\mathcal{L}_{X^{\epsilon}}}^{\epsilon}$ such that $X^{\epsilon}=\mathcal{G}_{\mathcal{L}_{X^{\epsilon}}}^{\epsilon}(\sqrt{\epsilon}W)$. For any $m\in(0,\infty)$, $u_{\epsilon}\in\mathcal{D}^{T}_m$, let
\begin{equation}\label{zep}	Z^{u_{\epsilon}}:=\mathcal{G}_{\mathcal{L}_{X^{\epsilon}}}^{\epsilon}\big(\sqrt{\epsilon}W+\int_{0}^{\cdot}u_{\epsilon}(s)\mathrm{d}s\big),
\end{equation}
and $Z^{u_{\epsilon}}$ is the unique stochastic process satisfying:
\begin{enumerate}
	\item[(1)] $Z^{u_{\epsilon}}$ is $\mathcal{F}_{t}$-adapted,
	\item[(2)]
	\begin{align*}
		\int_{0}^{T} \lVert b\left(Z_{s}^{u_{\epsilon}},\mathcal{L}_{X_{s}^{\epsilon}}\right) \rVert \,\mathrm{d}s + \int_{0}^{T} \lVert \sigma\left(Z_{s}^{u_{\epsilon}},\mathcal{L}_{X_{s}^{\epsilon}}\right) \rVert^{2} \,\mathrm{d}s + \int_{0}^{T} \lVert \sigma\left(Z_{s}^{u_{\epsilon}},\mathcal{L}_{X_{s}^{\epsilon}}\right)u_{\epsilon}\left(s\right) \rVert \,\mathrm{d}s +\left| K_{t}^{u_{\epsilon}} \right|_{0}^{T} <\infty,
	\end{align*}
	\item[(3)]
	\begin{equation}
	    \begin{split}
	    	Z_{t}^{u_{\epsilon}} = h + \int_{0}^{t} b_{\epsilon}\left(Z_{s}^{u_{\epsilon}},\mathcal{L}_{X_{s}^{\epsilon}} \right)\,\mathrm{d}s + \sqrt{\epsilon}\int_{0}^{t} \sigma_{\epsilon}\left(Z_{s}^{u_{\epsilon}},\mathcal{L}_{X_{s}^{\epsilon}} \right)\,\mathrm{d}W_{s} + \int_{0}^{t} \sigma_{\epsilon}\left(Z_{s}^{u_{\epsilon}},\mathcal{L}_{X_{s}^{\epsilon}} \right)u_{\epsilon}(s)\,\mathrm{d}s - K_{t}^{u_{\epsilon}}.
	    \end{split}
	\end{equation}
\end{enumerate}
\end{lemma}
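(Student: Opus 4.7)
The plan is to verify (A0), (A1), construct the Borel functional $\mathcal{G}^\epsilon_{\mathcal{L}_{X^\epsilon}}$, and then build $Z^{u_\epsilon}$ by a Girsanov transformation, in that order. For (A0), since $b_\epsilon$ and $\sigma_\epsilon$ are given as jointly measurable maps on $\overline{D(A)} \times \mathcal{P}_{2}$, once the deterministic time-indexed curve $s \mapsto \mathcal{L}_{X_s^\epsilon}$ is frozen, the slices $x \mapsto b_\epsilon(x, \mathcal{L}_{X_s^\epsilon})$ and $x \mapsto \sigma_\epsilon(x, \mathcal{L}_{X_s^\epsilon})$ are Borel measurable by composition with the measurable section $x \mapsto (x, \mathcal{L}_{X_s^\epsilon})$. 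For (A1), with the law frozen, equation (\ref{per}) reduces to a classical multivalued SDE with time-dependent coefficients $\tilde b_\epsilon(s,y) := b_\epsilon(y, \mathcal{L}_{X_s^\epsilon})$ and $\tilde\sigma_\epsilon(s,y) := \sigma_\epsilon(y, \mathcal{L}_{X_s^\epsilon})$; existence of a solution is postulated in (H3), and pathwise uniqueness for this equation is exactly (H4).

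With weak existence and pathwise uniqueness for the frozen-law SDE in hand, the Yamada--Watanabe principle, in its multivalued variant used in \cite{zhang,rwz}, produces a Borel measurable map $\mathcal{G}^\epsilon_{\mathcal{L}_{X^\epsilon}}: C([0,T], \mathbb{U}) \to \mathbb{S}$ satisfying $X^\epsilon = \mathcal{G}^\epsilon_{\mathcal{L}_{X^\epsilon}}(\sqrt{\epsilon}W)$ $P$-a.s. To build $Z^{u_\epsilon}$, I apply Girsanov's theorem with drift $\epsilon^{-1/2} u_\epsilon$: since $u_\epsilon \in \mathcal{D}_m^T$ forces $\int_0^T \|u_\epsilon(s)\|_{l^2}^2 \,ds \leq m$ $P$-a.s., Novikov's condition is trivially satisfied, so the associated stochastic exponential $R^\epsilon_T$ is a true $P$-martingale. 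Under $\tilde P := R^\epsilon_T \cdot P$, the shifted process $\tilde W_t := W_t + \epsilon^{-1/2} \int_0^t u_\epsilon(s)\,ds$ is a Brownian motion whose law under $\tilde P$ coincides with that of $W$ under $P$, so $\mathcal{G}^\epsilon_{\mathcal{L}_{X^\epsilon}}$ may legitimately be applied to $\sqrt{\epsilon}\tilde W$.

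Setting $Z^{u_\epsilon} := \mathcal{G}^\epsilon_{\mathcal{L}_{X^\epsilon}}\bigl(\sqrt{\epsilon}W + \int_0^\cdot u_\epsilon(s)\,ds\bigr) = \mathcal{G}^\epsilon_{\mathcal{L}_{X^\epsilon}}(\sqrt{\epsilon}\tilde W)$, the pair $(Z^{u_\epsilon}, K^{u_\epsilon})$ solves the frozen-law multivalued SDE driven by $\tilde W$ under $\tilde P$; rewriting $\sqrt{\epsilon}\,d\tilde W_s = \sqrt{\epsilon}\,dW_s + u_\epsilon(s)\,ds$ yields exactly equation (3), while the integrability assertions in (2) follow from the linear-growth bounds on $b_\epsilon$ and $\sigma_\epsilon$ (consequence of (H1) together with (H2)), the pathwise $L^2$ bound on $u_\epsilon$, and Proposition \ref{multi} to control $|K^{u_\epsilon}|_0^T$. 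The main obstacle I anticipate is uniqueness of $Z^{u_\epsilon}$ among processes satisfying (1)--(3): any such candidate becomes, under the same measure change $\tilde P$, a solution of the frozen-law multivalued SDE driven by $\tilde W$, and pathwise uniqueness (H4) forces it to coincide $\tilde P$-a.s.\ with $\mathcal{G}^\epsilon_{\mathcal{L}_{X^\epsilon}}(\sqrt{\epsilon}\tilde W)$; since $\tilde P \sim P$, the identification extends back to hold $P$-a.s.
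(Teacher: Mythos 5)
Your argument is correct and is essentially the proof the paper intends: the paper gives no proof of this lemma at all, merely citing \cite[Lemma 2.21]{liuw}, and that proof is exactly the freeze-the-law / Yamada--Watanabe / Girsanov chain you describe, with Novikov's condition trivially satisfied because $\int_{0}^{T}\lVert u_{\epsilon}(s)\rVert^{2}\,\mathrm{d}s$ is almost surely bounded by $m$, and with the equivalence $\tilde P\sim P$ used, as you note, both to make $\mathcal{G}_{\mathcal{L}_{X^{\epsilon}}}^{\epsilon}(\sqrt{\epsilon}\tilde W)$ well defined $P$-a.s.\ and to transfer the pathwise-uniqueness identification back to $P$. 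The only imprecision is calling the growth of $b_{\epsilon}$ ``linear'': under (\textbf{H1}) the drift has polynomial growth of order $q+1$ in $x$, but since $Z^{u_{\epsilon}}$ has continuous, hence bounded, paths on $[0,T]$, the integrability assertions in (2) still follow.
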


\subsection{Large deviation principle}

\begin{proposition}\label{Yu}Assume that (\hyperref[h1]{\textbf{H1}}) holds. Then for any $ u\in \mathcal{S}_{m} $, there exists a unique function $ Y^{u}=\left\{ Y_{t}^{u}, t\in\left[ 0, T \right] \right\} $ satisfying
	\begin{enumerate}
		\item[(1)] $Y^{u}\in C\big( \left[ 0, T \right], \overline{D(A)} \big),$
		\item[(2)] $$\int_{0}^{T} \left| b\left(Y_{s}^{u},\mathcal{L}_{X_{s}^{0}}\right) \right| \,\mathrm{d}s + \int_{0}^{T} \left| \sigma\left(Y_{s}^{u},\mathcal{L}_{X_{s}^{0}}\right)u\left(s\right) \right| \,\mathrm{d}s + \left| K_{t}^{u} \right|_{0}^{T} <\infty, $$
		\item[(3)]
		\begin{equation}\label{yy}
			Y_{t}^{u}=h+\int_{0}^{T}b\left( Y_{s}^{u},\mathcal{L}_{X_{s}^{0}}\right) \,\mathrm{d}s + \int_{0}^{T}\sigma\left( Y_{s}^{u},\mathcal{L}_{X_{s}^{0}}\right)u\left(s\right) \,\mathrm{d}s-K_{t}^{u},\quad t\in \left[0,T\right].
		\end{equation}
	\end{enumerate}
	
	Moreover, for any $m>0$,
	$$ \sup\limits_{u\in \mathcal{S}_{m}}\sup\limits_{t\in \left[0,T\right]}\left| Y_{t}^{u}\right| <\infty.$$
	$Y^{u}$ is called the solution to the  equation (\ref{yy}) above.
	
\end{proposition}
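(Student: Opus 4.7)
Since $X^0$ is already determined by (\ref{x0}), the measure $\mathcal{L}_{X^0_s} = \delta_{X^0_s}$ is a prescribed continuous curve, so (\ref{yy}) reduces to a deterministic controlled multivalued ODE with time-dependent coefficients $\tilde b(s,y) := b(y,\mathcal{L}_{X^0_s})$ (one-sided Lipschitz and locally Lipschitz with polynomial growth in $y$) and $\tilde\sigma(s,y) := \sigma(y,\mathcal{L}_{X^0_s})$ (globally Lipschitz in $y$), both measurable in $s$. My plan for existence is to regularize $A$ by its Yosida approximation $A_\lambda$, solve the single-valued ODE $\dot Y^\lambda_t = \tilde b(t,Y^\lambda_t) + \tilde\sigma(t,Y^\lambda_t)u(t) - A_\lambda(Y^\lambda_t)$ by Carath\'eodory theory for each $\lambda>0$, and pass to the limit $\lambda\downarrow 0$. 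The key a priori estimate comes from applying the chain rule to $\tfrac12|Y^\lambda_t - a|^2$ for some fixed $a \in \mathrm{Int}(D(A))$, invoking the Yosida analog of Proposition \ref{multi} to bound $\int_0^t \langle Y^\lambda_s - a, A_\lambda(Y^\lambda_s)\rangle\,ds$ from below by $\lambda_1 \int_0^t |A_\lambda(Y^\lambda_s)|\,ds$ minus lower-order terms, using the one-sided Lipschitz estimate on $\tilde b$, and controlling the control term via Young's inequality so that $\langle Y^\lambda - a, \tilde\sigma(Y^\lambda)u\rangle \leq C(1 + |Y^\lambda - a|^2)(1 + \|u(s)\|_{l^2})$. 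Since $\|u\|_{\ell^2_T}^2 \leq m$, Gronwall yields $\sup_t|Y^\lambda_t| + |K^\lambda|_0^T \leq C(m,T,h)$ independent of $\lambda$, where $K^\lambda_t := \int_0^t A_\lambda(Y^\lambda_s)\,ds$. Ascoli--Arzel\`a then delivers a subsequential limit $(Y^u, K^u) \in C([0,T],\overline{D(A)}) \times \mathcal{V}$ whose membership in $\mathcal{A}$ follows by passing the monotonicity characterization of $\mathcal A$ to the limit.

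For uniqueness, let $(Y^{u,i}, K^{u,i})$, $i=1,2$, be two solutions. The chain rule applied to $|Y^{u,1}_t - Y^{u,2}_t|^2$, together with (\ref{mono}) (which kills the $dK^{u,1} - dK^{u,2}$ contribution), the one-sided Lipschitz of $\tilde b$, and Cauchy--Schwarz combined with the Lipschitz bound on $\tilde\sigma$ for the control term, produces
\begin{equation*}
    |Y^{u,1}_t - Y^{u,2}_t|^2 \leq C\int_0^t |Y^{u,1}_s - Y^{u,2}_s|^2 \bigl(1 + \|u(s)\|_{l^2}\bigr)\,ds.
\end{equation*}
Since $\int_0^T \|u(s)\|_{l^2}\,ds \leq \sqrt{Tm}$ by Cauchy--Schwarz, the weight is integrable and Gronwall forces $Y^{u,1} \equiv Y^{u,2}$, whence $K^{u,1} \equiv K^{u,2}$ from (\ref{yy}). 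The uniform bound $\sup_{u\in \mathcal{S}_m}\sup_t|Y^u_t| < \infty$ is then immediate because the existence constant $C(m,T,h)$ above depends on $u$ only through $\|u\|_{\ell^2_T}^2 \leq m$.

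I expect the main obstacle to be the interplay between the merely $L^2$-integrable control $u$ and the unbounded multivalued part $A$: the quadratic-in-$u$ quantity $\|\tilde\sigma(Y)\|^2\|u\|^2$ is not uniformly bounded in $t$, so the Young split in the energy estimate must be arranged so as to produce a factor multiplying $|Y - a|^2$ of the form $C(1 + \|u(s)\|_{l^2})$ (integrable by Cauchy--Schwarz) without consuming the coercive contribution $\lambda_1|K|_0^t$ from Proposition \ref{multi} that is simultaneously needed to keep $|K|_0^T$ finite.
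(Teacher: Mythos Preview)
Your approach is correct and, in fact, supplies considerably more detail than the paper does: the paper's own proof consists of a single sentence deferring to Proposition~\ref{multi} (C\'epa's estimate) together with Proposition~3.7 of \cite{liuw}, with the words ``we omit the tedious proofs here.'' Your Yosida-approximation scheme, energy estimate against a point $a\in\mathrm{Int}(D(A))$, and Gronwall with the integrable weight $1+\|u(s)\|_{l^2}$ are exactly the standard ingredients that those references encapsulate, so in substance you are reconstructing the argument the paper cites rather than taking a different route.

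One small remark on the limit passage: the uniform bound on $|K^\lambda|_0^T$ gives BV compactness (Helly) but not directly equicontinuity, so Ascoli--Arzel\`a for $K^\lambda$ is not immediate. The cleaner way---and the one C\'epa uses---is to show $\{Y^\lambda\}$ is Cauchy in $C([0,T])$ by exploiting the resolvent inequality $\langle A_\lambda x - A_{\lambda'} x', x - x'\rangle \geq -(\lambda+\lambda')\,|A_\lambda x|\,|A_{\lambda'} x'|$ in the energy estimate for $|Y^\lambda - Y^{\lambda'}|^2$; convergence of $K^\lambda$ then follows from the equation. Either way the conclusion stands, and your identification of the delicate point---balancing the $L^2$ control against the coercive term from Proposition~\ref{multi}---is accurate.
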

\begin{proof} This result can be proved by using Proposition \ref{multi} presented in Section 2 and Proposition 3.7 in \cite{liuw}. We omit the tedious proofs here.\end{proof}

Next we present our main result about the large deviation principle.
\begin{theorem}
	Assume that (\hyperref[h1]{\textbf{H1}}), (\hyperref[h2]{\textbf{H2}}), \textbf{(H3)} and \textbf{(H4)} hold. Then the family of processes $ \left\{X_{t}^{\epsilon}, \epsilon >0, t\in[0,T]\right\} $ satisfies the large deviation principle on $C\big(\left[0,T\right],\overline{D(A)}\big)$ with speed $\epsilon$ and the good rate function $I$ given by
	\begin{equation}
		I\left( g\right): = \frac{1}{2}\inf\limits_{\left\{u\in \mathcal{S}_{m}: g=Y^{u}\right\}}\int_{0}^{T}\lVert u(s)\rVert^{2}\,\mathrm{d}s,\quad g\in C\left(\left[0,T\right],\overline{D(A)}\right),
	\end{equation}
where $Y^{u}$ is the solution to equation (\ref{yy}).
\end{theorem}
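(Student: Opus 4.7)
The plan is to apply the Budhiraja--Dupuis criterion recalled just before the statement: it suffices to exhibit measurable maps $\mathcal{G}^{\epsilon}$ and $\mathcal{G}^{0}$ satisfying (\textbf{LD}$_{\bm{1}}$) and (\textbf{LD}$_{\bm{2}}$). Lemma \ref{girs} already supplies $\mathcal{G}^{\epsilon}$ with $X^{\epsilon}=\mathcal{G}^{\epsilon}(\sqrt{\epsilon}W)$; for the skeleton I set $\mathcal{G}^{0}(\int_{0}^{\cdot}u(s)\,\mathrm{d}s):=Y^{u}$, which is well-defined and in $C([0,T],\overline{D(A)})$ by Proposition \ref{Yu}. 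Throughout, the systematic tool is the monotonicity inequality (\ref{mono}), which eliminates the reflection terms from every energy estimate, combined with the quantitative Proposition \ref{multi} whenever one needs an a priori bound on $|K|_{0}^{T}$.

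\emph{Verification of (\textbf{LD}$_{\bm{1}}$).} Let $u_n\rightharpoonup u$ weakly in $\mathcal{S}_{m}$ and write $\Delta_n=Y^{u_n}-Y^{u}$. A deterministic integration by parts gives
\begin{align*}
|\Delta_n(t)|^{2} = &\; 2\!\int_{0}^{t}\!\langle\Delta_n(s),\,b(Y^{u_n}_s,\delta_{X^{0}_s})-b(Y^{u}_s,\delta_{X^{0}_s})\rangle\,\mathrm{d}s \\
&\; +2\!\int_{0}^{t}\!\langle\Delta_n(s),\,[\sigma(Y^{u_n}_s,\delta_{X^{0}_s})-\sigma(Y^{u}_s,\delta_{X^{0}_s})]u_n(s)\rangle\,\mathrm{d}s \\
&\; +2\!\int_{0}^{t}\!\langle\Delta_n(s),\,\sigma(Y^{u}_s,\delta_{X^{0}_s})(u_n-u)(s)\rangle\,\mathrm{d}s \\
&\; -2\!\int_{0}^{t}\!\langle\Delta_n(s),\,\mathrm{d}K^{u_n}_s-\mathrm{d}K^{u}_s\rangle.
\end{align*}
The last integral is non-positive by (\ref{mono}); the first two are dominated by $C(1+\|u_n(s)\|_{l^{2}}^{2})|\Delta_n(s)|^{2}$ using the one-sided Lipschitz and full Lipschitz bounds in (\textbf{H1}) together with the uniform bound supplied by Proposition \ref{Yu}. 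The cross term is the key point: since $\sigma(Y^{u}_{\cdot},\delta_{X^{0}_{\cdot}})$ is a fixed bounded measurable path, the linear map $v\mapsto\sup_{t\le T}|\int_{0}^{t}\sigma(Y^{u}_s,\delta_{X^{0}_s})v(s)\,\mathrm{d}s|$ is compact on $L^{2}([0,T],l^{2})$, so weak convergence $u_n\rightharpoonup u$ drives this term to $0$ uniformly in $t$. An application of Gronwall's inequality (with integrable weight, as $\|u_n\|_{\ell_{T}^{2}}^{2}\le m$) then yields $\sup_{t}|\Delta_n(t)|\to 0$.

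\emph{Verification of (\textbf{LD}$_{\bm{2}}$).} This splits into two stages. Stage one is a law-of-large-numbers type result: specialising to $u_\epsilon\equiv 0$ and applying the same energy estimate to $X^{\epsilon}-X^{0}$ but retaining the Itô martingale $\sqrt{\epsilon}\int_{0}^{\cdot}\sigma_\epsilon\,\mathrm{d}W_s$, the Burkholder--Davis--Gundy inequality combined with the defects $\rho_{b,\epsilon}+\rho_{\sigma,\epsilon}\to 0$ from (\textbf{H2}) gives $\mathbb{E}\sup_{t\le T}|X^{\epsilon}_t-X^{0}_t|^{2}\to 0$, hence $\sup_{s\le T}\mathbb{W}_{2}(\mathcal{L}_{X^{\epsilon}_s},\delta_{X^{0}_s})\to 0$ by Remark \ref{W2}. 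Stage two handles the controlled processes: invoking Skorokhod's representation, assume $u_\epsilon\to u$ almost surely in the weak topology of $\mathcal{S}_{m}$, then apply Itô's formula to $|Z^{u_\epsilon}_t-Y^{u}_t|^{2}$ using (\ref{zep}) and (\ref{yy}). The monotone term vanishes by (\ref{mono}); the martingale term is $o_{\mathbb{P}}(1)$ by BDG and (\textbf{H2}); the drift and diffusion discrepancies are absorbed by (\textbf{H1}), (\textbf{H2}) and the stage-one convergence $\mathbb{W}_{2}(\mathcal{L}_{X^{\epsilon}_s},\delta_{X^{0}_s})\to 0$; and the cross term involving $u_\epsilon-u$ is treated exactly as in (\textbf{LD}$_{\bm{1}}$). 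Gronwall then delivers $\sup_{t}|Z^{u_\epsilon}_t-Y^{u}_t|\to 0$ in probability, which is stronger than convergence in distribution.

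\emph{Main obstacle.} The chief technical hurdle is the triple coupling of $\epsilon\to 0$, the weak limit $u_\epsilon\rightharpoonup u$, and the evolving fixed-point law $\mathcal{L}_{X^{\epsilon}}$, in the presence of the finite-variation reflection $K^{\epsilon}$. Because $K^{\epsilon}$ is only known to be continuous -- no Hölder regularity is available -- the discretization techniques of \cite{[HIP],[RST]} are unavailable; the workaround is to let all $K$-terms drop out via (\ref{mono}) at each $L^{2}$ estimate, while keeping the a priori variation bound $|K|_{0}^{T}<\infty$ coming from Proposition \ref{multi} to justify tightness and uniform-integrability claims. A secondary subtlety is that $u_\epsilon$ is random, so the compact-operator argument used to dispose of the cross term must be combined with Skorokhod's theorem and the deterministic ceiling $\|u_\epsilon(\cdot,\omega)\|_{\ell_{T}^{2}}^{2}\le m$.
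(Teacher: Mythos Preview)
Your overall framework---apply the Budhiraja--Dupuis criterion via Lemma~\ref{girs} and Proposition~\ref{Yu}---matches the paper. The differences, and one genuine gap, are in how (\textbf{LD}$_{\bm 1}$) and (\textbf{LD}$_{\bm 2}$) are verified.

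\textbf{The cross-term gap in (\textbf{LD}$_{\bm 1}$).} Your energy estimate leaves you with
\[
2\int_{0}^{t}\big\langle\Delta_n(s),\,\sigma(Y^{u}_s,\delta_{X^{0}_s})(u_n-u)(s)\big\rangle\,\mathrm{d}s,
\]
and you argue that the compactness of $v\mapsto\int_{0}^{\cdot}\sigma(Y^{u}_s,\delta_{X^{0}_s})v(s)\,\mathrm{d}s$ forces this to vanish. But that compactness only tells you $G_n(t):=\int_{0}^{t}\sigma(Y^{u}_s,\delta_{X^{0}_s})(u_n-u)(s)\,\mathrm{d}s\to 0$ uniformly; the integral you need still carries the $n$-dependent factor $\Delta_n(s)$ \emph{inside}, and neither Cauchy--Schwarz nor Young's inequality closes the loop (the $L^{1}$ or $L^{2}$ norm of $\dot G_n$ does not go to zero under weak convergence). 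A fix exists---integrate by parts to write the cross term as $\langle\Delta_n(t),G_n(t)\rangle-\int_{0}^{t}\langle G_n(s),\mathrm{d}\Delta_n(s)\rangle$ and then invoke a uniform bound on the total variation of $\Delta_n$ (hence of $K^{u_n}$) via Proposition~\ref{multi}---but you have not written it, and your reference to Proposition~\ref{multi} is only for ``tightness and uniform-integrability claims''. The paper sidesteps this entirely: its proof of (\textbf{LD}$_{\bm 1}$) (Proposition~\ref{ld1}) uses no Gronwall comparison at all, but instead shows $\{Y^{u_n}\}$ and $\{K^{u_n}\}$ are precompact in $C([0,T])$ by Ascoli--Arzel\`a, and then identifies any limit point as a solution of (\ref{yy}), concluding by uniqueness.

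\textbf{A structural difference in (\textbf{LD}$_{\bm 2}$).} You compare $Z^{u_\epsilon}$ with $Y^{u}$, the skeleton at the \emph{limiting} control, which forces you to carry the same cross term (now random) and to invoke Skorokhod. The paper instead compares $Z^{u_\epsilon}$ with $Y^{u_\epsilon}$, the skeleton at the \emph{same} control (Proposition~\ref{ld2}): this kills the cross term outright, needs no assumption on convergence of $u_\epsilon$, and yields the stronger conclusion $\mathbb{E}\sup_t|Z^{u_\epsilon}_t-Y^{u_\epsilon}_t|^{2}\to 0$ uniformly over $u_\epsilon\in\mathcal{D}_m^T$. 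The required (\textbf{LD}$_{\bm 2}$) then follows by combining this with (\textbf{LD}$_{\bm 1}$). This decoupling is the cleaner route and avoids the circularity your cross-term argument runs into twice.
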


\begin{proof}
	By Proposition \ref{Yu} we can define a map $$ \mathcal{G}^{0}: \mathcal{S}\ni u\mapsto Y^{u} \in C\left(\left[0,T\right],\overline{D(A)}\right) $$ where $Y^{u}$ is the unique solution of eqution (\ref{yy}).

	Under the assumption (\hyperref[h1]{\textbf{H1}}), for any $\epsilon >0$, $m\in \left(0,\infty\right)$and $u_{\epsilon}\in \mathcal{D}_{m}^{T}$, by Lemma \ref{girs} there exists a unique solution $ \left\{Z_{t}^{u_{\epsilon}} ,t\in \left[0,T\right]\right\} $ to the following multivalued SDE
	\begin{equation}\label{LDP}
	\mathrm{d}Z_{t}^{u_{\epsilon}}\in b_{\epsilon}\left(Z_{t}^{u_{\epsilon}},\mathcal{L}_{X_{t}^{\epsilon}}\right)\mathrm{d}t+ \sqrt{\epsilon}\sigma_{\epsilon}\left(Z_{t}^{\epsilon},\mathcal{L}_{X_{t}^{\epsilon}}\right)\mathrm{d}W_{t} + \sigma_{\epsilon}\left(Z_{t}^{\epsilon},\mathcal{L}_{X_{t}^{\epsilon}}\right)u_{\epsilon}\left(t\right)\mathrm{d}t-A\left(X_{t}^{\epsilon}\right)\mathrm{d}t,
	\end{equation}
	with the initial data $Z_{0}^{u_{\epsilon}}=h$ and $X^{\epsilon}$ is the solution to (\ref{per}).
	
To establish the LDP by using Theorem 2.11, it is sufficient to verify the following two conditions:

	\textbf{(LDP)$\bm{_{1}}$} For any given $m\in\left(0,\infty\right)$, if $\left\{u_{n}, n\in \mathbb{N}\right\}\subset \mathcal{S}_{m}$ weakly converges to $u\in \mathcal{S}_{m}$ as $n\to \infty$. Then
	$$
	\lim\limits_{n\to \infty}\sup\limits_{t\in \left[0,T\right]}\lvert \mathcal{G}^{0}\left(u_{n}\right)\left(t\right)-\mathcal{G}^{0}\left(u\right)\left(t\right) \rvert=0.
	$$
	
	\textbf{(LDP)$\bm{_{2}}$} For any given $m\in\left(0,\infty\right)$, let $\left\{u_{\epsilon}, \epsilon>0\right\}\subset \mathcal{S}_{m}$. Then
	$$
	\lim\limits_{\epsilon\to 0}\mathbb{E}\left(\sup\limits_{t\in \left[0,T\right]}\lvert Z_{t}^{u_{\epsilon}}-\mathcal{G}^{0}\left(u_{\epsilon}\right)\left(t\right)\rvert^{2}\right)=0.
	$$

The verification of \textbf{(LDP)$\bm{_{1}}$} and \textbf{(LDP)$\bm{_{2}}$} will be given in two propositions below. Note that \textbf{(LDP)$\bm{_{2}}$} is stronger than (\textbf{LD}$_{\bm{2}}$) required in Theorem 2.11.
\end{proof}


\begin{proposition}\label{ld1}
	For any given $m\in\left(0,\infty\right)$, if $\left\{u_{n}, n\in \mathbb{N}\right\}\subset \mathcal{S}_{m}$ weakly converges to $u\in \mathcal{S}_{m}$ as $n\to \infty$, then
	$$
	\lim\limits_{n\to \infty}\sup\limits_{t\in \left[0,T\right]}\lvert \mathcal{G}^{0}\left(u_{n}\right)\left(t\right)-\mathcal{G}^{0}\left(u\right)\left(t\right) \rvert=0.
	$$
\end{proposition}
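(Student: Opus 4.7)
The strategy is a compactness argument: show that $\{Y^{u_n}\}$ is relatively compact in $C([0,T],\overline{D(A)})$, then identify every uniformly convergent subsequential limit with the unique solution $Y^u$ of (\ref{yy}). Write $Y^n := Y^{u_n}$ and $K^n := K^{u_n}$. Proposition \ref{Yu} already supplies $\sup_n \|Y^n\|_\infty < \infty$, which, combined with the growth in \textbf{(H1)}, yields a constant $M$ with $|b(Y^n_r,\mathcal{L}_{X^0_r})| + \|\sigma(Y^n_r,\mathcal{L}_{X^0_r})\| \leq M$ uniformly in $n,r$.

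For equicontinuity, from (\ref{yy}) together with Cauchy--Schwarz and $\|u_n\|_{\ell^2_T}^2 \leq m$,
$$
|Y^n_t - Y^n_s| \leq M(t-s) + M\sqrt{m(t-s)} + |K^n|_s^t,\qquad 0\leq s<t\leq T.
$$
The heart of the compactness is therefore a uniform bound on the variation of $K^n$. I would fix $a\in\mathrm{Int}(D(A))$ and apply C\'epa's Proposition \ref{multi}: compute $\int_s^t\langle Y^n_r-a,dK^n_r\rangle$ using $dY^n_r = b\,dr + \sigma u_n\,dr - dK^n_r$ (legitimate since $Y^n$ is continuous and of bounded variation), bound $|Y^n_t-a|^2-|Y^n_s-a|^2 \leq C|Y^n_t-Y^n_s|$, plug the displayed inequality back in, and absorb the $|K^n|_s^t$ term into the coefficient $\lambda_1$. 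This produces $|K^n|_s^t \leq C\sqrt{t-s}$ uniformly in $n$. Arzel\`a--Ascoli then gives relative compactness of $\{Y^n\}$ in $C([0,T],\mathbb R^d)$, and via the equation also of $\{K^n\}$.

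Along any uniformly convergent subsequence $Y^{n_k}\to Y^*$, $K^{n_k}\to K^*$, I would pass to the limit in (\ref{yy}). The drift integral converges by the local Lipschitz condition in \textbf{(H1)} and uniform boundedness. For the control term the crucial step is the splitting
$$
\int_0^t \sigma(Y^{n_k}_s,\mathcal L_{X^0_s})u_{n_k}(s)\,ds = \int_0^t\bigl[\sigma(Y^{n_k}_s,\mathcal L_{X^0_s})-\sigma(Y^*_s,\mathcal L_{X^0_s})\bigr]u_{n_k}(s)\,ds + \int_0^t \sigma(Y^*_s,\mathcal L_{X^0_s})u_{n_k}(s)\,ds,
$$
in which the first summand is bounded by $L\|Y^{n_k}-Y^*\|_\infty\sqrt{2mT}\to 0$ by \textbf{(H1)} and Cauchy--Schwarz, while the second converges to $\int_0^t \sigma(Y^*_s,\mathcal L_{X^0_s}) u(s)\,ds$ because $u_{n_k}\rightharpoonup u$ weakly in $L^2([0,T],l^2)$ and $\sigma(Y^*_\cdot,\mathcal L_{X^0_\cdot})\mathbf 1_{[0,t]}$ is a fixed $L^2$ function. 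To verify $(Y^*,K^*)\in\mathcal A$, uniform boundedness of $|K^{n_k}|_0^T$ together with pointwise convergence $K^{n_k}_t\to K^*_t$ gives weak convergence of the signed measures $dK^{n_k}\to dK^*$ on $[0,T]$ (Helly-type selection), and the defining monotonicity inequality for $\mathcal A$ passes to the limit. By the uniqueness in Proposition \ref{Yu}, $Y^*=Y^u$; since every subsequence has a further subsequence converging to $Y^u$, the whole sequence converges uniformly.

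The principal obstacle is the uniform equicontinuity of $\{K^n\}$. Because $u_n$ enters the equation only with an $L^2$ bound and no pointwise control, one cannot read off a modulus of continuity for $K^n$ directly from (\ref{yy}); C\'epa's Proposition \ref{multi}, which converts the monotone structure of $A$ into a quantitative variation bound, is the essential ingredient that makes the compactness step go through. Once that bound is in place, the rest of the argument follows the standard template of the weak convergence method.
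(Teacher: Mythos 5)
Your overall architecture --- uniform boundedness, equicontinuity, Arzel\`a--Ascoli, identification of every subsequential limit with $Y^{u}$ via splitting the control integral into a Lipschitz-difference part and a weak-convergence part, then uniqueness and a sub-subsequence argument --- is exactly the paper's proof, and your limit-identification step reproduces the paper's computations essentially line by line (the Lipschitz estimate for the drift, the term $\int_0^t[\sigma(Y^{n_k})-\sigma(Y^*)]u_{n_k}$ killed by $\|Y^{n_k}-Y^*\|_\infty\sqrt{2mT}$, and the term $\int_0^t\sigma(Y^*)u_{n_k}$ handled by weak $L^2$ convergence against a fixed $L^2$ integrand). You are in fact more explicit than the paper on two points it glosses over: the contribution of $K^{u_n}$ to the modulus of continuity (the paper's displayed estimate simply omits the term $|K^{u_n}|_s^t$), and the verification that the limit pair $(Y^*,K^*)$ belongs to $\mathcal{A}$, which is needed before uniqueness can be invoked.

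The one step you rightly single out as the crux, however, does not close as you describe it. Substituting $\mathrm{d}K^n_r=b\,\mathrm{d}r+\sigma u_n\,\mathrm{d}r-\mathrm{d}Y^n_r$ into Proposition \ref{multi} and bounding $\tfrac12\bigl(|Y^n_s-a|^2-|Y^n_t-a|^2\bigr)\le \sup_n\|Y^n-a\|_\infty\,|Y^n_t-Y^n_s|$ leads, after inserting your displayed estimate for $|Y^n_t-Y^n_s|$, to
\begin{equation*}
\bigl(\lambda_1-\sup_n\|Y^n-a\|_\infty\bigr)\,|K^n|_s^t\;\le\;C(t-s)+C\sqrt{t-s},
\end{equation*}
so the proposed absorption requires $\sup_n\|Y^n-a\|_\infty<\lambda_1$. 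That is not available: the $\lambda_1$ produced by C\'epa's lemma is controlled by the distance from $a$ to $\partial D(A)$ and is in general far smaller than the a priori bound $C_\alpha$ on $Y^n$. Taking $s=0$, $t=T$, the quadratic boundary terms are just constants and you do obtain the uniform total-variation bound $\sup_n|K^n|_0^T<\infty$ with no absorption needed; but upgrading this to a uniform local modulus $|K^n|_s^t\le C\sqrt{t-s}$ (equivalently, equicontinuity of $\{Y^n\}$) requires a separate oscillation estimate for the deterministic multivalued Skorokhod problem, of the type proved by C\'epa and used by Ren--Xu--Zhang, rather than a direct Gronwall-style absorption. To be fair, the paper's own Step 1 is silent on exactly this point, so your proposal identifies the right difficulty and the right tool but the quantitative mechanism as written would fail.
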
	
	\begin{proof} According to the definition of $\mathcal{G}^{0}$, $Y^{u_{n}}=\mathcal{G}^{0}(u_{n})$ and $Y^{u}=\mathcal{G}^{0}(u)$. 
		
		Step1: $\left\{Y^{u_{n}}\right\}_{n\geq1}$ is pre-compact in $C\big([0,T],\overline{D(A)}\big)$. By Ascoli-Azela therom it suffices to prove that $\left\{Y^{u_{n}}\right\}_{n\geq1}$ is uniformly bounded and equi-continuous in $C\big([0,T],\overline{D(A)}\big)$. 

       The uniform boundedness is ensured by proposition \ref{Yu}. Let
        $$\sup\limits_{n\geq1}\sup\limits_{t\in[0,T]}\lvert Y_{t}^{u_{n}}\rvert=C_{\alpha},\quad \sup\limits_{n\geq1}\sup\limits_{t\in[0,T]}\lvert K_{t}^{u_{n}}\rvert=C_{\beta}.$$
	    
       Next we will prove that $\left\{Y^{u_{n}}\right\}_{n\geq1}$ is equi-continuous in $C\big([0,T],\overline{D(A)}\big)$.
	
	    For $T\geq t>s\geq0$,
	    \begin{equation}	Y_{t}^{u_{n}}-Y_{s}^{u_{n}}=\int_{s}^{t}b\left(Y_{r}^{u_{n}},\mathcal{L}_{X_{r}^{0}}\right)\,\mathrm{d}r+\int_{s}^{t}\sigma
\left(Y_{r}^{u_{n}},\mathcal{L}_{X_{r}^{0}}\right)u_{n}(r)\,\mathrm{d}r+|K^{u_{n}}|_{s}^{t}.
	    \end{equation}
        By (\hyperref[h1]{H1}) and Remark \ref{W2} we have
        \begin{align*}
        &\int_{s}^{t}\lvert b\left(Y_{r}^{u_{n}},\mathcal{L}_{X_{r}^{0}}\right)\rvert\mathrm{d}r \\
        \leq& \int_{s}^{t}\lvert b\left(Y_{r}^{u_{n}},\mathcal{L}_{X_{r}^{0}}\right)- b\left(0,\mathcal{L}_{X_{r}^{0}}\right)\rvert\mathrm{d}r +\int_{s}^{t}\lvert b\left(0,\mathcal{L}_{X_{r}^{0}}\right)- b\left(0,\delta_{0}\right)\rvert\mathrm{d}r+\int_{s}^{t}\lvert b\left(0,\delta_{0}\right)\rvert\mathrm{d}r\\
        \leq& \int_{s}^{t}L\left(1+\lvert Y_{r}^{u_{n}}\rvert^{q-1}\right)\lvert Y_{r}^{u_{n}}\rvert\,\mathrm{d}r+L\int_{s}^{t}|X_{r}^{0}|\,\mathrm{d}r+\int_{s}^{t}\lvert b\left(0,\delta_{0}\right)\rvert\mathrm{d}r \\
        \leq& L\left(C_{\alpha}\left(1+C_{\alpha}^{q-1}\right)+\sup\limits_{r\in[0,T]}|X_{r}^{0}|\right)|t-s|+\int_{s}^{t}\lvert b\left(0,\delta_{0}\right)\rvert\mathrm{d}r,
        \end{align*}
        and 
        \begin{align*}
        &\int_{s}^{t}\lvert \sigma\left(Y_{r}^{u_{n}},\mathcal{L}_{X_{r}^{0}}\right)u_{n}(r)\rvert\mathrm{d}r \\
        \leq& \int_{s}^{t}\lvert \sigma\left(Y_{r}^{u_{n}},\mathcal{L}_{X_{r}^{0}}\right)u_{n}(r)- \sigma\left(0,\delta_{0}\right)u_{n}(r)\rvert\mathrm{d}r+\int_{s}^{t}\lvert \sigma\left(0,\delta_{0}\right)u_{n}(r)\rvert\mathrm{d}r \\
        \leq& \int_{s}^{t}L\left(1+|Y_{r}^{u_{n}}|+|X_{r}^{0}|\right)|u_{n}(r)|\mathrm{d}r+\int_{s}^{t}\lVert\sigma\left(0,\delta_{0}\right)\rVert_{\ell_{2}}|u_{n}(r)|\mathrm{d}r \\
        \leq& L\left(1+C_{\alpha}+\sup _{r \in[0, T]}\left|X_{r}^{0}\right|\right)\left(\int_{0}^{T}\left|u_{n}(r)\right|^{2} \mathrm{~d} r\right)^{\frac{1}{2}}|t-s|^{\frac{1}{2}}+\left(\int_{s}^{t}\left|\sigma\left( 0, \delta_{0}\right)\right|^{2} \mathrm{~d} r\right)^{\frac{1}{2}}\left(\int_{0}^{T}\left|u_{n}(r)\right|^{2} \mathrm{~d} r\right)^{\frac{1}{2}}
        \end{align*}
        where H{\"o}lder inequality is used in the last inequality.
        
        By Remark \ref{r0}, there exists some $\rho(\epsilon)>0$ with $ t-s\leq\rho(\epsilon) $ such that
        $$\int_{s}^{t}\lvert b\left(0,\delta_{0}\right)\rvert\mathrm{d}r+m\left(\int_{s}^{t}\left|\sigma\left( 0, \delta_{0}\right)\right|^{2} \mathrm{~d} r\right)^{\frac{1}{2}}\leq \frac{\epsilon}{2}.$$
        Since $|K^{u_{n}}|$ is a continuous function with finite variation, once $ |t-s|<\frac{\epsilon}{2C_{\alpha,m,L,T}}$, we have
        $$
        	\lvert Y_{t}^{u_{n}}-Y_{s}^{u_{n}}\rvert \leq C_{\alpha,m,L,T}|t-s|+\frac{\epsilon}{2}\leq \epsilon.
        $$
        where $C_{\alpha,m,L,T}=min\left\{ L\left(C_{\alpha}\left(1+C_{\alpha}^{q-1}\right)+\sup\limits_{r\in[0,T]}|X_{r}^{0}|\right),Lm\left(1+C_{\alpha}+\sup\limits_{r \in[0, T]}\left|X_{r}^{0}\right|\right)\right\}$.

        This shows that $\left\{Y^{u_{n}}\right\}_{n\geq1}$ is pre-compact in $C\left([0,T],\mathbb{R}^{d}\right)$. \\

%
%

        Step2: Let $\eta$ and $\kappa$ be the limits of some subsequences of $\left\{Y^{u_{n}}\right\}_{n\geq 1}$ and $\left\{K^{u_{n}}\right\}_{n\geq 1}$ respectively. We will show that $\eta=Y^{u}$, $\kappa=K^{u}$. In that case, we assume that
        \begin{align}
        	\label{limy} \lim\limits_{n\to \infty}\sup_{t\in [0,T]}\lvert\eta(t)-Y^{u_{n}}(t)\rvert =0 \\
        	\label{limk} \lim\limits_{n\to \infty}\sup_{t\in [0,T]}\lvert\kappa(t)-K^{u_{n}}(t)\rvert =0
        \end{align}

        Notice that
        \begin{align}
        	\label{supy} \sup_{t\in [0,T]}\lvert \eta(t)\rvert\leq \sup_{n\geq 1}\sup_{t\in [0,T]}\lvert Y^{u_{n}}(t)\rvert=C_{\alpha}< \infty  \\
        	\label{supk} \sup_{t\in [0,T]}\lvert \kappa(t)\rvert\leq \sup_{n\geq 1}\sup_{t\in [0,T]}\lvert K^{u_{n}}(t)\rvert=C_{\beta}< \infty
        \end{align}
        By (\hyperref[h1]{H1}), (\ref{limy}) and (\ref{supy}), we have
        \begin{align*}
        	&\int_{0}^{T}\lvert b\left(Y^{u_{n}}(r),\mathcal{L}_{X^{0}_{r}}\right)-b\left(\eta(r),\mathcal{L}_{X^{0}_{r}}\right)\rvert\mathrm{d}r  \\
        	\leq L&\int_{0}^{T} \left(1+\lvert Y^{u_{n}}(r)\rvert^{q-1}+\lvert \eta(r)\rvert^{q-1}\right)\lvert Y^{u_{n}}(r)-\eta(r)\rvert\mathrm{d}r  \\
        	\leq L&\int_{0}^{T}\sup_{r\in [0,T]}\left(1+\lvert Y^{u_{n}}(r)\rvert^{q-1}+\lvert \eta(r)\rvert^{q-1}\right)\sup_{r\in [0,T]}\lvert Y^{u_{n}}(r)-\eta(r)\rvert\mathrm{d}r \\
        	\leq L&T\left(1+2C^{q-1}_{\alpha}\right)\sup_{r\in [0,T]}\lvert Y^{u_{n}}(r)-\eta(r)\rvert
        \end{align*}
        which implies that for each $t\in [0,T]$,
        \begin{align}\label{ldp1b}
        	\int_{0}^{t} b\left(Y^{u_{n}}(r),\mathcal{L}_{X^{0}_{r}}\right)\mathrm{d}r\, \to \int_{0}^{t} b\left(\eta(r),\mathcal{L}_{X^{0}_{r}}\right)\mathrm{d}r, \quad\text{as}\,\,\,n\to\infty.
        \end{align}
        By (\hyperref[h1]{H1}), (\ref{supy}), Remark \ref{W2}, and the fact that $X^0\in C\left([0,T],\mathbb{R}^{d}\right)$, we can get that
        \begin{align*}\label{}
        	\int_{0}^{T}\lVert\sigma\left(\eta(r),\mathcal{L}_{X_{r}^{0}}\right)\rVert\mathrm{~d}r<\infty.
        \end{align*}
        Recall that $u_{n}$ converges to $u$ weakly in $L^2\left([0,T],\mathbb{R}^{d}\right)$, for any $e\in\mathbb{R}^d$,
        \begin{align*}
        	\int_{0}^{t}\langle\sigma(\eta(r),\mathcal{L}_{X_{r}^0})u_{n}(r),e\rangle\mathrm{~d}r\to \int_{0}^{t}\langle\sigma(\eta(r),\mathcal{L}_{X_{r}^0})u(r),e\rangle\mathrm{~d}r,\quad \text{as}\,\,\,n\to\infty.
        \end{align*}
        Thus,
        \begin{align}\label{ldp1s}
        	\int_{0}^{t}\sigma(\eta(r),\mathcal{L}_{X_{r}^0})u_{n}(r)\mathrm{~d}r\to \int_{0}^{t}\sigma(\eta(r),\mathcal{L}_{X_{r}^0})u(r)\mathrm{~d}r,\quad \text{as}\,\,\,n\to\infty.
        \end{align}
        By (\hyperref[h1]{H1}), (\ref{limy}) and H{\"o}lder inequality, we obtain
        \begin{align*}
        	&\int_{0}^{T}\lvert\sigma(Y^{u_{n}}(r),\mathcal{L}_{X_{r}^0})u_{n}(r)-\sigma(\eta(r),\mathcal{L}_{X_{r}^0})u_{n}(r)\rvert\mathrm{~d}r   \\
        	\leq L&\int_{0}^{T}\lvert Y^{u_{n}}(r)-\eta(r)\rvert\lvert u_{n}(r)\rvert\mathrm{~d}r  \\
        	\leq L&\sup_{r\in[0,T]}\lvert Y^{u_{n}}(r)-\eta(r)\rvert \int_{0}^{T}\lvert u_{n}(r)\rvert\mathrm{~d}r   \\
        	\leq L&T^{\frac{1}{2}}\sup_{r\in[0,T]}\lvert Y^{u_{n}}(r)-\eta(r)\rvert\sup_{1\leq i\leq n}\left(\int_{0}^{T}\lvert u_{i}(r)\rvert^2\mathrm{~d}r\right)^{\frac{1}{2}}.
        \end{align*}
        Since $u_i\in \mathcal{S}_m$, we know that $\frac{1}{2}\int_{0}^{T}\lvert u_i(r)\rvert^2\mathrm{d}r\leq m$, then
        \begin{align}\label{ldp1e}
            \lim\limits_{n\to \infty}\int_{0}^{T}\lvert\sigma(Y^{u_{n}}(r),\mathcal{L}_{X_{r}^0})u_{n}(r)-\sigma(\eta(r),\mathcal{L}_{X_{r}^0})u_{n}(r)\rvert\mathrm{~d}r\to 0.
        \end{align}
        By (\ref{ldp1e}) and (\ref{ldp1s}) we have
        \begin{align}\label{ldp1c}
        \int_{0}^{t}\sigma(Y^{u_{n}}(r),\mathcal{L}_{X_{r}^0})u_{n}(r)\mathrm{~d}r\to \int_{0}^{t}\sigma(\eta(r),\mathcal{L}_{X_{r}^0})u(r)\mathrm{~d}r,\quad \text{as}\,\,\,n\to\infty.
        \end{align}

        Recall that $(Y^{u_n},K^{u_n})$ is the solution of (\ref{yy}) with $u$ replaced by $u_n$ which is the following equation,
        $$Y^{u_n}=h+\int_{0}^{t}b\left(Y_{s}^{u_{n}},\mathcal{L}_{X_{s}^{0}}\right)\mathrm{d}s+\int_{0}^{t}\sigma\left(Y_{s}^{u_{n}},\mathcal{L}_{X_{s}^{0}}\right)u_{n}(s)\mathrm{d}s-K^{u_n}_{t},\quad t\in[0,T].$$
        Combining the above (\ref{ldp1b}) and (\ref{ldp1c}) and let $n\to\infty$, we deduce that $(\eta,\kappa)$ is a solution to (\ref{yy}), and the uniqueness of the solution implies that $\eta=Y^{u_n}$ and $\kappa=K^{u_n}$, which completes the proof.

	\end{proof}


To verify \textbf{(LDP)$\bm{_{2}}$}, we need the following lemma.

\begin{lemma}\label{ep0}
	There exist two positive constant $\epsilon_{0}$ and $C_{T}$  such that
	$$\mathbb{E}\left(\sup_{t\in[0,T]}\lvert X_{t}^{\epsilon}-X_{t}^{0}\rvert^{2}\right)\leq C_{T}\left(\epsilon+\rho_{b,\epsilon}^{2}+\epsilon\rho_{\sigma,\epsilon}^{2}\right),\quad \forall\epsilon\in \left(0,\epsilon_{0}\right],$$
	where $\rho_{b,\epsilon}$ and $\rho_{\sigma,\epsilon}$ is given in \hyperref[h2]{\textbf{(H2)}}.
\end{lemma}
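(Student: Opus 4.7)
The plan is to apply It\^o's formula to $|X_t^\epsilon - X_t^0|^2$, eliminate the multivalued contribution via the monotonicity inequality (\ref{mono}), and close the estimate through a Gronwall argument using (H1) and (H2). Setting $\Delta_t := X_t^\epsilon - X_t^0$, the It\^o decomposition reads
\begin{align*}
|\Delta_t|^2 = {}& 2\int_0^t \bigl\langle \Delta_s,\, b_\epsilon(X_s^\epsilon, \mathcal{L}_{X_s^\epsilon}) - b(X_s^0, \mathcal{L}_{X_s^0})\bigr\rangle\,\mathrm{d}s - 2\int_0^t \bigl\langle \Delta_s,\, \mathrm{d}K_s^\epsilon - \mathrm{d}K_s^0\bigr\rangle \\
& + 2\sqrt{\epsilon}\int_0^t \bigl\langle \Delta_s,\, \sigma_\epsilon(X_s^\epsilon, \mathcal{L}_{X_s^\epsilon})\,\mathrm{d}W_s\bigr\rangle + \epsilon \int_0^t \lVert\sigma_\epsilon(X_s^\epsilon, \mathcal{L}_{X_s^\epsilon})\rVert^2\,\mathrm{d}s.
\end{align*}
Since $(X^\epsilon, K^\epsilon)$ and $(X^0, K^0)$ both lie in $\mathcal{A}$, inequality (\ref{mono}) forces the $\mathrm{d}K$ contribution to be nonpositive and it can be discarded.

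Next I would split the drift into three telescoping pieces. The approximation error $b_\epsilon(X_s^\epsilon, \mathcal{L}_{X_s^\epsilon}) - b(X_s^\epsilon, \mathcal{L}_{X_s^\epsilon})$ has norm at most $\rho_{b,\epsilon}$ by (H2), so by Young's inequality it contributes at most $|\Delta_s|^2 + \rho_{b,\epsilon}^2$. The spatial piece $b(X_s^\epsilon, \mathcal{L}_{X_s^\epsilon}) - b(X_s^0, \mathcal{L}_{X_s^\epsilon})$ is paired directly with $\Delta_s$ and the first line of (H1) delivers $2\langle \Delta_s, \cdot\rangle \leq 2L|\Delta_s|^2$, avoiding the polynomial-growth penalty that the third line of (H1) would otherwise introduce. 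For the law-dependent piece $b(X_s^0, \mathcal{L}_{X_s^\epsilon}) - b(X_s^0, \mathcal{L}_{X_s^0})$, the second line of (H1) combined with Remark \ref{W2} yields $L \mathbb{W}_2(\mathcal{L}_{X_s^\epsilon}, \mathcal{L}_{X_s^0}) \leq L(\mathbb{E}|\Delta_s|^2)^{1/2}$, contributing $|\Delta_s|^2 + L^2\mathbb{E}|\Delta_s|^2$ after Young. For the quadratic-variation term I split $\lVert\sigma_\epsilon\rVert^2 \leq 2\rho_{\sigma,\epsilon}^2 + 2\lVert\sigma(X_s^\epsilon, \mathcal{L}_{X_s^\epsilon})\rVert^2$; the Lipschitz estimate in (H1) with Remark \ref{W2} plus a uniform a priori bound $\sup_{\epsilon\in(0,1]}\mathbb{E}\sup_{t\leq T}|X_t^\epsilon|^2 < \infty$ (obtained by the same It\^o-monotonicity routine applied to $|X_t^\epsilon - a|^2$ for $a \in \mathrm{Int}(D(A))$, invoking Proposition \ref{multi} to dominate $K^\epsilon$) produces an overall contribution of order $\epsilon(1 + \rho_{\sigma,\epsilon}^2)$.

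For the stochastic integral I would apply Burkholder-Davis-Gundy, Cauchy-Schwarz and Young:
$$\mathbb{E}\sup_{t\leq T} 2\sqrt{\epsilon}\left|\int_0^t \langle \Delta_s, \sigma_\epsilon(X_s^\epsilon, \mathcal{L}_{X_s^\epsilon})\,\mathrm{d}W_s\rangle\right| \leq \tfrac{1}{2}\mathbb{E}\sup_{t\leq T}|\Delta_t|^2 + C\epsilon\,\mathbb{E}\!\int_0^T \lVert\sigma_\epsilon\rVert^2\,\mathrm{d}s,$$
which absorbs the supremum on the left-hand side. Collecting all contributions and taking $\mathbb{E}\sup_{t\leq T}$ reduces the estimate to
$$\mathbb{E}\sup_{t\leq T}|\Delta_t|^2 \leq C_T\bigl(\epsilon + \rho_{b,\epsilon}^2 + \epsilon\rho_{\sigma,\epsilon}^2\bigr) + C_T\int_0^T \mathbb{E}\sup_{r\leq s}|\Delta_r|^2\,\mathrm{d}s,$$
and Gronwall's lemma closes the argument; the threshold $\epsilon_0$ serves only to keep $\rho_{b,\epsilon}, \rho_{\sigma,\epsilon}$ bounded.

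The main obstacle I expect is the a priori second-moment bound on $X^\epsilon$ required to control $\lVert\sigma\rVert^2$: because $K^\epsilon$ is only continuous and of finite variation, one cannot appeal to H\"older regularity, and the measure-dependence in the coefficients couples all samples. Both difficulties are handled simultaneously by Proposition \ref{multi}---choosing $a \in \mathrm{Int}(D(A))$, the bound $\int_0^t \langle X_r^\epsilon - a, \mathrm{d}K_r^\epsilon\rangle \geq \lambda_1 |K^\epsilon|_0^t - \lambda_2\int_0^t |X_r^\epsilon - a|\,\mathrm{d}r - \lambda_3 t$ lets one close a Gronwall loop for $\mathbb{E}\sup_{t\leq T}|X_t^\epsilon - a|^2$ uniformly in $\epsilon \in (0,1]$, after which the Wasserstein term $\mathbb{W}_2(\mathcal{L}_{X_s^\epsilon}, \mathcal{L}_{X_s^0})^2 \leq \mathbb{E}\sup_{r\leq s}|\Delta_r|^2$ fits seamlessly into the main Gronwall step.
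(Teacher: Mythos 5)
Your proposal is correct and follows essentially the same route as the paper: It\^o's formula on $|X_t^\epsilon-X_t^0|^2$, discarding the $\mathrm{d}K$ term via (\ref{mono}), the same three-way telescoping of the drift using the one-sided Lipschitz condition and Remark \ref{W2}, BDG plus Young for the martingale part, and Gronwall. The only divergence is in bounding $\epsilon\int_0^T\lVert\sigma_\epsilon(X_s^\epsilon,\mathcal{L}_{X_s^\epsilon})\rVert^2\,\mathrm{d}s$: you invoke a uniform a priori bound on $\mathbb{E}\sup_t|X_t^\epsilon|^2$ obtained from Proposition \ref{multi}, whereas the paper telescopes $\sigma_\epsilon(X^\epsilon,\mathcal{L}_{X^\epsilon})$ through $\sigma(X^0,\mathcal{L}_{X^0})$ so that the resulting constant is just $\int_0^T\lVert\sigma(X_r^0,\mathcal{L}_{X_r^0})\rVert^2\,\mathrm{d}r$ (finite since $X^0$ is a deterministic continuous path), thereby avoiding the a priori moment estimate altogether---both versions are valid.
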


\begin{proof}
	By applying It$\hat{\text{o}}$ formula we have
	\begin{align*}
		&\lvert X_{t}^{\epsilon}-X_{t}^{0}\rvert^{2} \\
		=&2\int_{0}^{t}\left\langle b_{\epsilon}\left(X_{r}^{\epsilon},\mathcal{L}_{X_{r}^{\epsilon}}\right)-b\left(X_{r}^{0},\mathcal{L}_{X_{r}^{0}}\right), X_{r}^{\epsilon}-X_{r}^{0}\right\rangle\mathrm{d}r 
	+2\sqrt{\epsilon}\int_{0}^{t}\left\langle\sigma_{\epsilon}\left(X_{r}^{\epsilon},\mathcal{L}_{X_{r}^{\epsilon}}\right), X_{r}^{\epsilon}-X_{r}^{0}\right\rangle\mathrm{d}W(s) \\
		&+\epsilon\int_{0}^{t}\lVert\sigma_{\epsilon}\left(X_{r}^{\epsilon},\mathcal{L}_{X_{r}^{\epsilon}}\right)\rVert_{\mathcal{L}_{2}}^{2}\mathrm{d}r+\int_{0}^{t}\left\langle X_{r}^{\epsilon}-X_{r}^{0},\mathrm{d}K_{r}^{0}-\mathrm{d}K_{r}^{\epsilon}\right\rangle \\
		=:&J_{1}^{\epsilon}(t)+J_{2}^{\epsilon}(t)+J_{3}^{\epsilon}(t)+J_{4}^{\epsilon}(t).
	\end{align*}
	Next we estimate the four items in the last equality.
	
	For $J_{1}^{\epsilon}(t)$: \\
	By (\hyperref[h1]{\textbf{H1}}),(\hyperref[h2]{\textbf{H2}}) and Remark \ref{W2},
	\begin{align*}
		\sup_{t\in [0,T]}\lvert J_{1}^{\epsilon}(t)\rvert 
		\leq& \,2\int_{0}^{T}\lvert\left\langle X_{r}^{\epsilon}-X_{r}^{0},  b_{\epsilon}\left(X_{r}^{\epsilon},\mathcal{L}_{X_{r}^{\epsilon}}\right)-b\left(X_{r}^{\epsilon},\mathcal{L}_{X_{r}^{\epsilon}}\right)\right\rangle\rvert\mathrm{d}r \\
		&+2\int_{0}^{T}\lvert\left\langle X_{r}^{\epsilon}-X_{r}^{0},  b\left(X_{r}^{\epsilon},\mathcal{L}_{X_{r}^{\epsilon}}\right)-b\left(X_{r}^{0},\mathcal{L}_{X_{r}^{\epsilon}}\right)\right\rangle\rvert\mathrm{d}r \\
		&+2\int_{0}^{T}\lvert\left\langle X_{r}^{\epsilon}-X_{r}^{0},  b\left(X_{r}^{0},\mathcal{L}_{X_{r}^{\epsilon}}\right)-b\left(X_{r}^{0},\mathcal{L}_{X_{r}^{0}}\right)\right\rangle\rvert\mathrm{d}r \\
		\leq& \,2\rho_{b,\epsilon}\int_{0}^{T}\lvert X_{r}^{\epsilon}-X_{r}^{0}\rvert\mathrm{d}r+2L\int_{0}^{T}\lvert X_{r}^{\epsilon}-X_{r}^{0}\rvert^{2}\mathrm{d}r+2L\int_{0}^{T}\mathbb{W}^{2}\left(\mathcal{L}_{X_{r}^{\epsilon}},\mathcal{L}_{X_{r}^{0}}\right)\lvert X_{r}^{\epsilon}-X_{r}^{0}\rvert\mathrm{d}r \\
		\leq& \,(3L+1)\int_{0}^{T}\lvert X_{r}^{\epsilon}-X_{r}^{0}\rvert^{2}\,\mathrm{d}r+L\int_{0}^{T}
		\mathbb{W}^{2}\left(\mathcal{L}_{X_{r}^{\epsilon}},\mathcal{L}_{X_{r}^{0}}\right)\,\mathrm{d}r+\rho_{b,\epsilon}^{2}T \\
		\leq& \,(3L+1)\int_{0}^{T}\lvert X_{r}^{\epsilon}-X_{r}^{0}\rvert^{2}\,\mathrm{d}r+L\int_{0}^{T}
		\mathbb{E}\left(\lvert X_{r}^{\epsilon}-X_{r}^{0}\rvert^{2}\right)\,\mathrm{d}r+\rho_{b,\epsilon}^{2}T.\\
	\end{align*}
	Thus we have
	$$\mathbb{E}\left(\sup_{t\in [0,T]}\lvert J_{1}^{\epsilon}(t)\rvert\right)\leq (4L+1)\mathbb{E}\int_{0}^{T}\lvert X_{r}^{\epsilon}-X_{r}^{0}\rvert^{2}\,\mathrm{d}r+\rho_{b,\epsilon}^{2}T. $$
	
    For $J_{2}^{\epsilon}(t)$: \\
	By Burkholder-Davis-Gundy's inequality and Young's inequality, we have
	\begin{align*}
		\mathbb{E}\left(\sup_{t\in [0,T]}\lvert J_{2}^{\epsilon}(t)\rvert\right) 
		\leq& C\sqrt{\epsilon}\,\mathbb{E}\left[\int_{0}^{T}\lVert\sigma_{\epsilon}\left(X_{r}^{\epsilon},\mathcal{L}_{X_{r}^{\epsilon}}\right)\rVert_{\mathcal{L}_{2}}^{2}\cdot\lvert X_{r}^{\epsilon}-X_{r}^{0}\rvert^{2}\,\mathrm{d}r\right]^{\frac{1}{2}} \\
		\leq& \frac{1}{4}\mathbb{E}\left(\sup_{r\in[0,T]}\lvert X_{r}^{\epsilon}-X_{r}^{0}\rvert^{2}\right)+C\epsilon\,\mathbb{E}\int_{0}^{T}\lVert\sigma_{\epsilon}\left(X_{r}^{\epsilon},\mathcal{L}_{X_{r}^{\epsilon}}\right)\rVert_{\mathcal{L}_{2}}^{2}\,\mathrm{d}r \\
		\leq& \left(\frac{1}{4}+CL^{2}T\epsilon\right)\mathbb{E}\left(\sup_{r\in[0,T]}\lvert X_{r}^{\epsilon}-X_{r}^{0}\rvert^{2}\right)+C\epsilon\,\int_{0}^{T}\lVert\sigma\left(X_{r}^{\epsilon},\mathcal{L}_{X_{r}^{\epsilon}}\right)\rVert_{\mathcal{L}_{2}}^{2}\,\mathrm{d}r+C\epsilon\rho_{\sigma,\epsilon}^{2}T.
	\end{align*}

	For $J_{3}^{\epsilon}(t)$: \\
	By (\hyperref[h1]{H1}),(\hyperref[h2]{H2}) and Remark \ref{W2} again, we get
	\begin{align*}
		\mathbb{E}\left(\sup_{t\in [0,T]}\lvert J_{3}^{\epsilon}(t)\rvert\right) 
		=&\, \epsilon\,\mathbb{E}\int_{0}^{T}\lVert\sigma_{\epsilon}\left(X_{r}^{\epsilon},\mathcal{L}_{X_{r}^{\epsilon}}\right)\rVert_{\mathcal{L}_{2}}^{2}\,\mathrm{d}r \\
		\leq&\, C\epsilon\,\mathbb{E}\int_{0}^{T}\lVert\sigma_{\epsilon}\left(X_{r}^{\epsilon},\mathcal{L}_{X_{r}^{\epsilon}}\right)-\sigma\left(X_{r}^{\epsilon},\mathcal{L}_{X_{r}^{\epsilon}}\right)\rVert_{\mathcal{L}_{2}}^{2}\,\mathrm{d}r +C\epsilon\int_{0}^{T}\lVert\sigma\left(X_{r}^{0},\mathcal{L}_{X_{r}^{0}}\right)\rVert_{\mathcal{L}_{2}}^{2}\,\mathrm{d}r\\
		&+\, C\epsilon\,\mathbb{E}\int_{0}^{T}\lVert\sigma\left(X_{r}^{\epsilon},\mathcal{L}_{X_{r}^{\epsilon}}\right)-\sigma\left(X_{r}^{0},\mathcal{L}_{X_{r}^{0}}\right)\rVert_{\mathcal{L}_{2}}^{2}\,\mathrm{d}r \\
		\leq&\, C\epsilon\rho_{\sigma,\epsilon}^{2}T+CL^{2}\epsilon\,\mathbb{E}\int_{0}^{T}\left(\lvert X_{r}^{\epsilon}-X_{r}^{0}\rvert^{2}+\mathbb{W}^{2}\left(\mathcal{L}_{X_{r}^{\epsilon}},\mathcal{L}_{X_{r}^{0}}\right)\right)\mathrm{d}r \\
		&+\,C\epsilon\int_{0}^{T}\lVert\sigma\left(X_{r}^{0},\mathcal{L}_{X_{r}^{0}}\right)\rVert_{\mathcal{L}_{2}}^{2}\,\mathrm{d}r \\
		\leq&\, C\epsilon\rho_{\sigma,\epsilon}^{2}T+CL^{2}T\epsilon\,\mathbb{E}\left(\sup_{r\in[0,T]}\lvert X_{r}^{\epsilon}-X_{r}^{0}\rvert^{2}\right)+\,C\epsilon\int_{0}^{T}\lVert\sigma\left(X_{r}^{0},\mathcal{L}_{X_{r}^{0}}\right)\rVert_{\mathcal{L}_{2}}^{2}\,\mathrm{d}r, \\
	\end{align*}
	
	For $J_{4}^{\epsilon}(t)$: \\
	From (\ref{mono}) we can easily get
	$$\sup_{t\in [0,T]}\lvert J_{4}^{\epsilon}(t)\rvert\leq 0.$$

	Combining the four estimates together, we obtain
	\begin{align*}
		&\left(\frac{3}{4}-CL^{2}T\epsilon\right)\mathbb{E}\left(\sup_{r\in[0,T]}\lvert X_{r}^{\epsilon}-X_{r}^{0}\rvert^{2}\right) \\
		\leq& (4L+1)\mathbb{E}\int_{0}^{T}\lvert X_{r}^{\epsilon}-X_{r}^{0}\rvert^{2}\,\mathrm{d}r+\rho_{b,\epsilon}^{2}T+C\epsilon\,\int_{0}^{T}\lVert\sigma\left(X_{r}^{\epsilon},\mathcal{L}_{X_{r}^{\epsilon}}\right)\rVert_{\mathcal{L}_{2}}^{2}\,\mathrm{d}r+C\epsilon\rho_{\sigma,\epsilon}^{2}T.
	\end{align*}
	
	Since $X^{0}\in C\left([0,T],\mathbb{R}^{d}\right)$ and  by (\hyperref[h1]{H1}),  there exists $\epsilon_{0}>0$ small enough such that for any $\epsilon\in \left(0,\epsilon_{0}\right]$,
		$$\frac{3}{4}-CL^{2}T\epsilon\geq\frac{1}{4},$$
	Finally by Gronwall's inequality, there exists a constant $C_{T}>0$ such that for any $\epsilon\in \left(0,\epsilon_{0}\right]$,
	$$\mathbb{E}\left(\sup_{t\in[0,T]}\lvert X_{t}^{\epsilon}-X_{t}^{0}\rvert^{2}\right)\leq C_{T}\left(\epsilon+\rho_{b,\epsilon}^{2}+\epsilon\rho_{\sigma,\epsilon}^{2}\right),\quad \forall\epsilon\in \left(0,\epsilon_{0}\right].$$
\end{proof}

Now we are in the position to verify  \textbf{(LDP)$\bm{_{2}}$}.

\begin{proposition}\label{ld2}
	For any given $m\in\left(0,\infty\right)$, let $\left\{u_{\epsilon}, \epsilon>0\right\}\subset \mathcal{S}_{m}$. Then
	$$
	\lim\limits_{\epsilon\to 0}\mathbb{E}\left(\sup\limits_{t\in \left[0,T\right]}\lvert Z_{t}^{u_{\epsilon}}-\mathcal{G}^{0}\left(u_{\epsilon}\right)\left(t\right)\rvert^{2}\right)=0.
	$$
	\begin{proof}
		Let $Y_{t}^{u_{\epsilon}}$ be the solution of (\ref{yy}) with $u$ replaced by $u_{\epsilon}$, so $\mathcal{G}^{0}(u_{\epsilon})=Y^{u_{\epsilon}}$. Then
		\begin{align*}
			\phi_{t}^{\epsilon,u_{\epsilon}}:&=Z_{t}^{u_{\epsilon}}-Y_{t}^{u_{\epsilon}}\\
=&K_{t}^{u_{\epsilon}}-K_{t}^{\epsilon,u_{\epsilon}} +\int_{0}^{t}\left[b_{\epsilon}\left(Z_{r}^{u_{\epsilon}},\mathcal{L}_{X_{r}^{\epsilon}}\right)-b\left(Y_{r}^{u_{\epsilon}},\mathcal{L}_{X_{r}^{0}}\right)\right]\,\mathrm{d}r \\
			&+\int_{0}^{t}\left[\sigma_{\epsilon}\left(Z_{r}^{u_{\epsilon}},\mathcal{L}_{X_{r}^{\epsilon}}\right)u_{\epsilon}(r)-\sigma\left(Y_{r}^{u_{\epsilon}},\mathcal{L}_{X_{r}^{0}}\right)u_{\epsilon}(r)\right]\,\mathrm{d}r +\sqrt{\epsilon}\int_{0}^{t}\sigma_{\epsilon}\left(Z_{r}^{u_{\epsilon}},\mathcal{L}_{X_{r}^{\epsilon}}\right)\,\mathrm{d}W(r).
		\end{align*}
		By It$\hat{\text{o}}$'s formula we have
		\begin{align*}
			\lvert\phi_{t}^{\epsilon,u_{\epsilon}}\rvert^{2}=&2\int_{0}^{t}\left\langle \phi_{r}^{\epsilon,u_{\epsilon}},\mathrm{d}K_{r}^{u_{\epsilon}}-\mathrm{d}K_{r}^{\epsilon,u_{\epsilon}}\right\rangle +2\int_{0}^{t}\left\langle\phi_{r}^{\epsilon,u_{\epsilon}},b_{\epsilon}\left(Z_{r}^{u_{\epsilon}},\mathcal{L}_{X_{r}^{\epsilon}}\right)-b\left(Y_{r}^{u_{\epsilon}},\mathcal{L}_{X_{r}^{0}}\right)\right\rangle\,\mathrm{d}r \\
			&+2\int_{0}^{t}\left\langle\phi_{r}^{\epsilon,u_{\epsilon}},\sigma_{\epsilon}\left(Z_{r}^{u_{\epsilon}},\mathcal{L}_{X_{r}^{\epsilon}}\right)u_{\epsilon}(r)-\sigma\left(Y_{r}^{u_{\epsilon}},\mathcal{L}_{X_{r}^{0}}\right)u_{\epsilon}(r)\right\rangle\,\mathrm{d}r \\
			&+2\sqrt{\epsilon}\int_{0}^{t}\left\langle\phi_{r}^{\epsilon,u_{\epsilon}},\sigma_{\epsilon}\left(Z_{r}^{u_{\epsilon}},\mathcal{L}_{X_{r}^{\epsilon}}\right)\,\mathrm{d}W(r)\right\rangle +\epsilon\int_{0}^{t}\lVert \sigma_{\epsilon}\left(Z_{r}^{u_{\epsilon}},\mathcal{L}_{X_{r}^{\epsilon}}\right)\rVert_{\mathcal{L}_{2}}^{2}\mathrm{d}r \\
			=:&I_{1}^{\epsilon}(t)+I_{2}^{\epsilon}(t)+I_{3}^{\epsilon}(t)+I_{4}^{\epsilon}(t)+I_{5}^{\epsilon}(t).
		\end{align*}
	Next we estimate the five items in the equality above.
	    
For $I_{1}^{\epsilon}(t)$: \\
	    By (\ref{mono}) we get immediately 
	    $\lvert I_{1}^{\epsilon}(t)\rvert\leq 0.$
	
	    For $I_{2}^{\epsilon}(t)$: \\
	    By Assumptions (\hyperref[h1]{H1}),(\hyperref[h2]{H2}) and Remark \ref{W2}, we have
	    \begin{align*}
	    	\sup_{t\in [0,T]}\lvert I_{2}^{\epsilon}(t)\rvert 
	    	\leq& \,2\int_{0}^{T}\lvert\left\langle \phi_{r}^{\epsilon,u_{\epsilon}},  b_{\epsilon}\left(Z_{r}^{u_{\epsilon}},\mathcal{L}_{X_{r}^{\epsilon}}\right)-b\left(Y_{r}^{u_{\epsilon}},\mathcal{L}_{X_{r}^{0}}\right)\right\rangle\rvert\mathrm{d}r \\
	    	\leq& \,2\int_{0}^{T}\lvert\left\langle \phi_{r}^{\epsilon,u_{\epsilon}},  b_{\epsilon}\left(Z_{r}^{u_{\epsilon}},\mathcal{L}_{X_{r}^{\epsilon}}\right)-b\left(Z_{r}^{u_{\epsilon}},\mathcal{L}_{X_{r}^{\epsilon}}\right)\right\rangle\rvert\mathrm{d}r \\
	    	&+2\int_{0}^{T}\lvert\left\langle \phi_{r}^{\epsilon,u_{\epsilon}},  b\left(Z_{r}^{u_{\epsilon}},\mathcal{L}_{X_{r}^{\epsilon}}\right)-b\left(Y_{r}^{u_{\epsilon}},\mathcal{L}_{X_{r}^{\epsilon}}\right)\right\rangle\rvert\mathrm{d}r \\
	    	&+2\int_{0}^{T}\lvert\left\langle \phi_{r}^{\epsilon,u_{\epsilon}},  b\left(Y_{r}^{u_{\epsilon}},\mathcal{L}_{X_{r}^{\epsilon}}\right)-b\left(Y_{r}^{u_{\epsilon}},\mathcal{L}_{X_{r}^{0}}\right)\right\rangle\rvert\mathrm{d}r \\
	    	\leq&\,2\rho_{b,\epsilon}\int_{0}^{T}\lvert\phi_{r}^{\epsilon,u_{\epsilon}}\rvert\mathrm{d}r+2L\int_{0}^{T}\lvert\phi_{r}^{\epsilon,u_{\epsilon}}\rvert^{2}\mathrm{d}r+2L\int_{0}^{T}\mathbb{W}^{2}\left(\mathcal{L}_{X_{r}^{\epsilon}},\mathcal{L}_{X_{r}^{0}}\right)\lvert \phi_{r}^{\epsilon, u_{\epsilon}}\rvert\mathrm{d}r \\
	    	\leq&\left[\int_{0}^{T}\rho_{b,\epsilon}^{2}\mathrm{d}r+\int_{0}^{T}\lvert\phi_{r}^{\epsilon,u_{\epsilon}}\rvert^{2}\mathrm{d}r\right]+2L\int_{0}^{T}\lvert\phi_{r}^{\epsilon,u_{\epsilon}}\rvert^{2}\mathrm{d}r+2L\int_{0}^{T}\left[\mathbb{E}\left(\lvert X_{r}^{\epsilon}-X_{r}^{0}\rvert^{2}\right)\right]^{\frac{1}{2}}\lvert \phi_{r}^{\epsilon,u_{\epsilon}}\rvert\mathrm{d}r \\
	    	\leq&\,\rho_{b,\epsilon}^{2}T+\left(2L+1\right)\int_{0}^{T}\lvert\phi_{r}^{\epsilon,u_{\epsilon}}\rvert^{2}\mathrm{d}r+L\left[\int_{0}^{T}\mathbb{E}\left(\lvert X_{r}^{\epsilon}-X_{r}^{0}\rvert^{2}\right)\mathrm{d}r+\int_{0}^{T}\lvert\phi_{r}^{\epsilon,u_{\epsilon}}\rvert^{2}\mathrm{d}r\right] \\
	    	\leq&\left(3L+1\right)\int_{0}^{T}\lvert\phi_{r}^{\epsilon,u_{\epsilon}}\rvert^{2}\mathrm{d}r+LT\mathbb{E}\left(\sup_{r\in[0,T]}\lvert X_{r}^{\epsilon}-X_{r}^{0}\rvert^{2}\right)+\rho_{b,\epsilon}^{2}T.	
	    \end{align*}
	
	    Hence $$\mathbb{E}\left(\sup_{r \in[0, T]}\lvert I_{2}^{\epsilon}(t)\rvert\right)\leq \left(3L+1\right)\int_{0}^{T}\mathbb{E}\left(\sup_{s\in[0,r]}\lvert\phi_{s}^{\epsilon,u_{\epsilon}}\rvert^{2}\right)\mathrm{d}r+LT\mathbb{E}\left(\sup_{r\in[0,T]}\lvert X_{r}^{\epsilon}-X_{r}^{0}\rvert^{2}\right)+\rho_{b,\epsilon}^{2}T.$$
	
	    For $I_{3}^{\epsilon}(t)$: \\
	    For any $\tau>0$ we have
	    \begin{align*}
	    	&\mathbb{E}\left(\sup_{t\in [0,T]}\lvert I_{3}^{\epsilon}(t)\rvert\right) \\
	    	\leq& \, 2\mathbb{E}\int_{0}^{T}\lvert\left\langle \phi_{r}^{\epsilon,u_{\epsilon}}, \left[\sigma_{\epsilon}\left(Z_{r}^{u_{\epsilon}},\mathcal{L}_{X_{r}^{\epsilon}}\right)-\sigma\left(Z_{r}^{u_{\epsilon}},\mathcal{L}_{X_{r}^{0}}\right)\right]u_{\epsilon}(r)\right\rangle\rvert\,\mathrm{d}r \\
	    	&+ 2\mathbb{E}\int_{0}^{T}\lvert\left\langle \phi_{r}^{\epsilon,u_{\epsilon}}, \left[\sigma\left(Z_{r}^{u_{\epsilon}},\mathcal{L}_{X_{r}^{\epsilon}}\right)-\sigma\left(Y_{r}^{u_{\epsilon}},\mathcal{L}_{X_{r}^{0}}\right)\right]u_{\epsilon}(r)\right\rangle\rvert\,\mathrm{d}r \\
	    	\leq& \,2\rho_{\sigma,\epsilon}\mathbb{E}\int_{0}^{T}\lvert\phi_{r}^{\epsilon,u_{\epsilon}}\rvert\lvert u_{\epsilon}(r)\rvert\,\mathrm{d}r+2\mathbb{E}\int_{0}^{T}\lvert\phi_{r}^{\epsilon,u_{\epsilon}}\rvert\cdot\lVert\sigma\left(Z_{r}^{u_{\epsilon}},\mathcal{L}_{X_{r}^{\epsilon}}\right)-\sigma\left(Y_{r}^{u_{\epsilon}},\mathcal{L}_{X_{r}^{0}}\right)\rVert_{\mathcal{L}_{2}}\cdot\lvert u_{\epsilon}(r)\rvert\,\mathrm{d}r \\
	    	\leq& \, \int_{0}^{T}\mathbb{E}\left(\sup_{s\in [0,r]}\lvert\phi_{s}^{\epsilon,u_{\epsilon}}\rvert^{2}\right)\,\mathrm{d}r+\rho_{\sigma,\epsilon}^{2}\int_{0}^{T}\lvert u_{\epsilon}(r)\rvert^{2}\,\mathrm{d}r \\
	    	&+2\mathbb{E}\left(\sup_{r\in[0,T]}\lvert\phi_{r}^{\epsilon,u_{\epsilon}}\rvert\left(\int_{0}^{T}\lVert\sigma\left(Z_{r}^{u_{\epsilon}},\mathcal{L}_{X_{r}^{\epsilon}}\right)-\sigma\left(Y_{r}^{u_{\epsilon}},\mathcal{L}_{X_{r}^{0}}\right)\rVert_{\mathcal{L}_{2}}^{2}\,\mathrm{d}r\right)^{\frac{1}{2}}\left(\int_{0}^{T}\lvert u_{\epsilon}(r)\rvert^{2}\,\mathrm{d}r\right)^{\frac{1}{2}}\right) \\
	    	\leq& \, \int_{0}^{T}\mathbb{E}\left(\sup_{s\in [0,r]}\lvert\phi_{s}^{\epsilon,u_{\epsilon}}\rvert^{2}\right)\,\mathrm{d}r+\rho_{\sigma,\epsilon}^{2}m+\tau\mathbb{E}\left(\sup_{r\in[0,T]}\lvert\phi_{r}^{\epsilon,u_{\epsilon}}\rvert^{2}\right)+C_{m,T,\tau,L_{\sigma}}\int_{0}^{T}\left(\mathbb{E}\lvert\phi_{r}^{\epsilon,u_{\epsilon}}\rvert^{2}+\mathbb{E}\lvert X_{r}^{\epsilon}-X_{r}^{0}\rvert^{2}\right)\,\mathrm{d}r \\
	    	=&\tau\mathbb{E}\left(\sup_{r\in[0,T]}\lvert\phi_{r}^{\epsilon,u_{\epsilon}}\rvert^{2}\right)+C_{m,T,\tau,L_{\sigma}}\int_{0}^{T}\mathbb{E}\left(\sup_{s\in [0,r]}\lvert\phi_{s}^{\epsilon,u_{\epsilon}}\rvert^{2}\right)\,\mathrm{d}r+C_{m,T,\tau,L_{\sigma}}\mathbb{E}\left(\sup_{r\in[0,T]}\lvert X_{r}^{\epsilon}-X_{r}^{0}\rvert^{2}\right)+\rho_{\sigma,\epsilon}^{2}m.
	   \end{align*}
	
	    For $I_{4}^{\epsilon}(t)$ and $I_{5}^{\epsilon}(t)$: \\
	    \begin{align*}
	    	&\mathbb{E}\left(\sup_{t\in [0,T]}\lvert I_{4}^{\epsilon}(t)\rvert\right)+\mathbb{E}\left(\sup_{t\in [0,T]}\lvert I_{5}^{\epsilon}(t)\rvert\right) \\
	    	\leq&\, C\sqrt{\epsilon}\,\mathbb{E}\left[\int_{0}^{T}\lVert\sigma_{\epsilon}\left(Z_{r}^{u_{\epsilon}},\mathcal{L}_{X_{r}^{\epsilon}}\right)\rVert_{\mathcal{L}_{2}}^{2}\cdot\lvert\phi_{r}^{\epsilon,u_{\epsilon}}\rvert^{2}\,\mathrm{d}r\right]^{\frac{1}{2}}+\epsilon\,\mathbb{E}\int_{0}^{T}\lVert\sigma_{\epsilon}\left(Z_{r}^{u_{\epsilon}},\mathcal{L}_{X_{r}^{\epsilon}}\right)\rVert_{\mathcal{L}_{2}}^{2}\,\mathrm{d}r \\
	    	\leq& \,\tau\mathbb{E}\left(\sup_{r\in [0,T]}\lvert\phi_{r}^{\epsilon,u_{\epsilon}}\rvert^{2}\right)+C_{\tau}\epsilon\int_{0}^{T}\lVert\sigma_{\epsilon}\left(Z_{r}^{u_{\epsilon}},\mathcal{L}_{X_{r}^{\epsilon}}\right)\rVert_{\mathcal{L}_{2}}^{2}\,\mathrm{d}r \\
	    	\leq& \, \tau\mathbb{E}\left(\sup_{r\in [0,T]}\lvert\phi_{r}^{\epsilon,u_{\epsilon}}\rvert^{2}\right)+C_{\tau}\epsilon\,\mathbb{E}\int_{0}^{T}\lVert\sigma_{\epsilon}\left(Z_{r}^{u_{\epsilon}},\mathcal{L}_{X_{r}^{\epsilon}}\right)-\sigma\left(Z_{r}^{u_{\epsilon}},\mathcal{L}_{X_{r}^{\epsilon}}\right)\rVert_{\mathcal{L}_{2}}^{2}\,\mathrm{d}r \\
	    	&+C_{\tau}\epsilon\,\mathbb{E}\int_{0}^{T}\lVert\sigma\left(Z_{r}^{u_{\epsilon}},\mathcal{L}_{X_{r}^{\epsilon}}\right)-\sigma\left(Y_{r}^{u_{\epsilon}},\mathcal{L}_{X_{r}^{0}}\right)\rVert_{\mathcal{L}_{2}}^{2}\,\mathrm{d}r \\
	    	&+C_{\tau}\epsilon\,\mathbb{E}\int_{0}^{T}\lVert\sigma\left(Y_{r}^{u_{\epsilon}},\mathcal{L}_{X_{r}^{0}}\right)-\sigma\left(0,\delta_{0}\right)\rVert_{\mathcal{L}_{2}}^{2}\,\mathrm{d}r+C_{\tau}\epsilon\int_{0}^{T}\lVert\sigma\left(0,\delta_{0}\right)\rVert_{\mathcal{L}_{2}}^{2}\,\mathrm{d}r \\
	    	\leq& \, \tau\mathbb{E}\left(\sup_{r\in [0,T]}\lvert\phi_{r}^{\epsilon,u_{\epsilon}}\rvert^{2}\right)+C_{\tau}\rho_{\sigma,\epsilon}^{2}T\epsilon+C_{\tau,L_{\sigma}}\epsilon\,\int_{0}^{T}\mathbb{E}\left(\sup_{r\in [0,T]}\lvert\phi_{r}^{\epsilon,u_{\epsilon}}\rvert^{2}\right)\,\mathrm{d}r \\
	    	&+ C_{\tau,L_{\sigma}}\epsilon\,\int_{0}^{T}\mathbb{E}\lvert X_{r}^{\epsilon}-X_{r}^{0}\rvert^{2}\,\mathrm{d}r+C_{\tau}T\epsilon\sup_{u_{\epsilon}\in \mathcal{S}_{m}}\sup_{r\in [0,T]}\lvert Y_{r}^{u_{\epsilon}}\rvert^{2}+C_{\tau}T\epsilon\sup_{r\in [0,T]}|X_{r}^{0}|^{2}+C_{\tau,\sigma_{0}}\epsilon \\
	    	\leq&\, \left(\tau+C\epsilon\right)\mathbb{E}\left(\sup_{r\in [0,T]}\lvert\phi_{r}^{\epsilon,u_{\epsilon}}\rvert^{2}\right)+C\epsilon+C\epsilon\,\mathbb{E}\left(\sup_{r\in[0,T]}\lvert X_{r}^{\epsilon}-X_{r}^{0}\rvert^{2}\right)+C\rho_{\sigma,\epsilon}^{2}T\epsilon,
	    \end{align*}
		where $C$ is dependent on $\tau,L_{\sigma},L_{b},T,Y^{u_\epsilon},X^{0},\sigma_{0},\epsilon_{0}$.
		
Taking $\tau=\frac{1}{10}$ and combining the five estimates above, we get for any $\epsilon\in \left(0,\epsilon_{0}\right]$,
		\begin{align*}
			&\left(\frac{4}{5}-C\epsilon\right)\mathbb{E}\left(\sup_{r\in [0,T]}\lvert\phi_{r}^{\epsilon,u_{\epsilon}}\rvert^{2}\right) \leq\, C\int_{0}^{T}\mathbb{E}\left(\sup_{s\in[0,r]}\lvert\phi_{s}^{\epsilon,u_{\epsilon}}\rvert^{2}\right)\mathrm{d}r+C\epsilon+C\left(\rho_{b,\epsilon}^{2}+\rho_{\sigma,\epsilon}^{2}+\epsilon\rho_{\sigma,\epsilon}^{2}\right),
		\end{align*}
		Since $\rho_{\sigma,\epsilon}$ and $\rho_{b,\epsilon}$ convergence to $0$ as $\epsilon\to 0$,
		we obtain by the Gronwall's inequality 
		$$\lim\limits_{\epsilon\to 0}\mathbb{E}\left(\sup_{t\in[0,T]}\lvert Z_{t}^{u_{\epsilon}}-Y_{t}^{u_{\epsilon}}\rvert^{2}\right)=0,$$
		which is the desired result.
		
	\end{proof}
\end{proposition}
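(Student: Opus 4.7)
The plan is to set $\phi_t^{\epsilon,u_\epsilon} := Z_t^{u_\epsilon} - Y_t^{u_\epsilon}$, where $Y^{u_\epsilon}=\mathcal{G}^0(u_\epsilon)$ solves (\ref{yy}) with $u$ replaced by $u_\epsilon$, and to derive a Gronwall-type inequality for $\mathbb{E}\sup_{s\le t}|\phi_s^{\epsilon,u_\epsilon}|^2$. Subtracting the two equations, $\phi^{\epsilon,u_\epsilon}$ satisfies a multivalued SDE whose ingredients are: the difference $\mathrm{d}K^{u_\epsilon}-\mathrm{d}K^{\epsilon,u_\epsilon}$, the drift difference $b_\epsilon(Z^{u_\epsilon},\mathcal{L}_{X^\epsilon})-b(Y^{u_\epsilon},\mathcal{L}_{X^0})$, the controlled diffusion difference $[\sigma_\epsilon(Z^{u_\epsilon},\mathcal{L}_{X^\epsilon})-\sigma(Y^{u_\epsilon},\mathcal{L}_{X^0})]u_\epsilon$, and a small-noise stochastic integral with prefactor $\sqrt\epsilon$. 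Applying It\^o's formula to $|\phi^{\epsilon,u_\epsilon}|^2$ produces five terms, which I would bound separately.

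The $K$-difference term is nonpositive thanks to the monotonicity relation (\ref{mono}). For the drift term I would telescope through two intermediate points: first replacing $b_\epsilon$ by $b$ (controlled uniformly by $\rho_{b,\epsilon}$ via \textbf{(H2)}), then replacing $Z^{u_\epsilon}$ by $Y^{u_\epsilon}$ (controlled via the monotone-in-$x$ inequality in \textbf{(H1)} by $L|\phi^{\epsilon,u_\epsilon}|^2$), and finally replacing $\mathcal{L}_{X^\epsilon}$ by $\mathcal{L}_{X^0}$ (controlled by $L|\phi^{\epsilon,u_\epsilon}|\,\mathbb{W}_2(\mathcal{L}_{X^\epsilon},\mathcal{L}_{X^0})$). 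Young's inequality together with Remark \ref{W2} converts the last factor into $\bigl(\mathbb{E}|X_r^\epsilon-X_r^0|^2\bigr)^{1/2}$, giving a bound of the form $C\int_0^t|\phi_s^{\epsilon,u_\epsilon}|^2\,\mathrm{d}s + \rho_{b,\epsilon}^2 T + LT\,\mathbb{E}\sup_{r\le T}|X_r^\epsilon-X_r^0|^2$.

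The main obstacle is the controlled diffusion term, because $u_\epsilon$ is only bounded in $L^2([0,T])$ (by $\sqrt{2m}$) and not in sup norm, so one cannot pull any sup over time outside the $u_\epsilon$ factor. My strategy is to split the $\sigma$-difference into $\sigma_\epsilon(\cdot,\cdot)-\sigma(\cdot,\cdot)$ (uniformly $\le\rho_{\sigma,\epsilon}$) and $\sigma(Z^{u_\epsilon},\mathcal{L}_{X^\epsilon})-\sigma(Y^{u_\epsilon},\mathcal{L}_{X^0})$ (Lipschitz by $|\phi^{\epsilon,u_\epsilon}|+\mathbb{W}_2(\mathcal{L}_{X^\epsilon},\mathcal{L}_{X^0})$). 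For the second piece I would use Cauchy--Schwarz to bring in $\|u_\epsilon\|_{L^2}^2\le 2m$ after pulling out $\sup_{r\le T}|\phi_r^{\epsilon,u_\epsilon}|$, then apply Young's inequality with a small parameter $\tau$ so that $\tau\,\mathbb{E}\sup|\phi^{\epsilon,u_\epsilon}|^2$ can be absorbed on the left, leaving residues of the form $\rho_{\sigma,\epsilon}^2 m$, $C_{m,\tau}\int_0^t\mathbb{E}|\phi^{\epsilon,u_\epsilon}_s|^2\,\mathrm{d}s$ and $C_{m,\tau}\,\mathbb{E}\sup_{r\le T}|X_r^\epsilon-X_r^0|^2$.

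The remaining two terms are handled by Burkholder--Davis--Gundy (for the martingale) and direct estimation of the It\^o correction, both of which contribute $O(\epsilon)$ modulo another $\tau$-absorbable sup. Combining all five bounds with $\tau$ small and $\epsilon\le\epsilon_0$ so that the coefficient of $\mathbb{E}\sup_{s\le t}|\phi_s^{\epsilon,u_\epsilon}|^2$ stays bounded away from zero yields an inequality of the shape
$$\mathbb{E}\sup_{s\le t}|\phi_s^{\epsilon,u_\epsilon}|^2 \le C\int_0^t \mathbb{E}\sup_{r\le s}|\phi_r^{\epsilon,u_\epsilon}|^2\,\mathrm{d}s + C\bigl(\epsilon+\rho_{b,\epsilon}^2+\rho_{\sigma,\epsilon}^2\bigr) + C\,\mathbb{E}\sup_{r\le T}|X_r^\epsilon-X_r^0|^2.$$
By Lemma \ref{ep0} the final term is itself $O(\epsilon+\rho_{b,\epsilon}^2+\epsilon\rho_{\sigma,\epsilon}^2)$, and by \textbf{(H2)} the constants $\rho_{b,\epsilon},\rho_{\sigma,\epsilon}$ vanish as $\epsilon\to 0$. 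Gronwall's inequality then closes the argument and gives the desired convergence.
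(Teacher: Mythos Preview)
Your proposal is correct and follows essentially the same route as the paper: It\^o's formula for $|\phi^{\epsilon,u_\epsilon}|^2$, the monotonicity inequality (\ref{mono}) to discard the $K$-difference, the same three-step telescoping of the drift via \textbf{(H1)}--\textbf{(H2)}, Cauchy--Schwarz plus Young's inequality with a small parameter $\tau$ for the $u_\epsilon$-controlled diffusion term, BDG for the martingale, and finally Gronwall together with Lemma \ref{ep0}. The only cosmetic difference is that the paper treats $I_4^\epsilon$ and $I_5^\epsilon$ together and tracks constants a bit more explicitly, but the structure and all essential ideas match.
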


\subsection{Moderate deviation principle}
        Lemma \ref{girs} can be also used to establish the MDP of $X^\epsilon$ as $\epsilon$ decrease to 0. We introduce the following conditions.

        Assume $\lambda(\epsilon)>0,\,\, \epsilon>0$ satisfies
        \begin{align}\label{lame}
        	\lambda(\epsilon)\to 0, \quad \frac{\epsilon}{\lambda^2(\epsilon)}\to 0, \quad \text{as}\,\,\epsilon\to 0.
        \end{align}

        Define
        \begin{align*}
        	M^{\epsilon}_t:=\frac{1}{\lambda(\epsilon)}(X_t^\epsilon-X_t^0), \quad t\in[0,T].
        \end{align*}
        Then $M^{\epsilon}_t$ satisfies the following multivalued SDE
        \begin{align}
        	\mathrm{d}M^{\epsilon}_t\in&\,\,\frac{1}{\lambda(\epsilon)}\int_{0}^{t}\left(b_{\epsilon}(\lambda(\epsilon)M^{\epsilon}_s+X^0_s,\mathcal{L}_{X_s^{\epsilon}})-b(X_s^0,\mathcal{L}_{X_s^0})\right)\mathrm{d}s  \nonumber\\
        	&+\frac{\sqrt{\epsilon}}{\lambda(\epsilon)}\int_{0}^{t}\sigma_{\epsilon}(\lambda(\epsilon)M^{\epsilon}_s+X_s^0,\mathcal{L}_{X_s^{\epsilon}})\mathrm{d}W_s-\text{A}(	M^{\epsilon}_t)\mathrm{d}t.
        \end{align}
        By Proposition \ref{solu} and Proposition \ref{strongsolu}, $(M^{\epsilon}_t,\hat{K}^\epsilon)$ is the unique solution to the following equation
        \begin{align}\label{mep}
        	\mathrm{d}M^{\epsilon}_t=&\frac{1}{\lambda(\epsilon)}\int_{0}^{t}\left(b_{\epsilon}(\lambda(\epsilon)M^{\epsilon}_s+X^0_s,\mathcal{L}_{X_s^{\epsilon}})-b(X_s^0,\mathcal{L}_{X_s^0})\right)\mathrm{d}s   \nonumber \\
        	&+\frac{\sqrt{\epsilon}}{\lambda(\epsilon)}\int_{0}^{t}\sigma_{\epsilon}(\lambda(\epsilon)M^{\epsilon}_s+X_s^0,\mathcal{L}_{X_s^{\epsilon}})\mathrm{d}W_s-\mathrm{d}\hat{K}^\epsilon_t.
        \end{align}

        Let
        \begin{align*}	\Gamma^\epsilon_{\mathcal{L}_{X^\epsilon}}(\cdot):=\frac{1}{\lambda(\epsilon)}\left(\mathcal{G}^\epsilon_{\mathcal{L}_{X^\epsilon}}(\cdot)-X^0\right),
        \end{align*}
        then
        \begin{enumerate}
        	\item[(a)] $\Gamma^\epsilon_{\mathcal{L}_{X^\epsilon}}$ is a measurable map from $C([0,T],\mathbb{R}^d)\mapsto C([0,T],\overline{D(A)}) $ such that
        	$$M^\epsilon=\Gamma^\epsilon_{\mathcal{L}_{X^\epsilon}}\left(\sqrt{\epsilon}W(\cdot)\right)$$
        	\item[(b)] For any $m\in(0,\infty)$, $\psi_\epsilon\in\mathcal{D}_m^T$, let
        	\begin{align}\label{mpsi}
        		M^{\psi_\epsilon}:=\Gamma^\epsilon_{\mathcal{L}_{X^\epsilon}}\left(\sqrt{\epsilon}W(\cdot)+\lambda(\epsilon)\int_{0}^{\cdot}\psi_\epsilon\mathrm{d}s\right).
        	\end{align}
        \end{enumerate}

        We make the following assumptions:

        \textbf{(B0)} There exist $L',q'\geq 0$ such that for all $x,x'\in\mathbb{R}^{d}$,
        \begin{align}\label{b0}
        |b'(x,\mathcal{L}_{X^0_s})-b'(x',\mathcal{L}_{x^0_s})|\leq L'(1+|x|^{q'}+|x'|^{q'})|x-x'|.
        \end{align}

        \textbf{(B1)}
        \begin{align}\label{b1}
        	\int_{0}^{T}|b'(X^0_t,\mathcal{L}_{X^0_t})|\mathrm{d}t<\infty.
        \end{align}
        where $b'(x,\mu)$ denotes the derivative of $b(x,\mu)$ with respect to the variable $x$.
        
            \textbf{(B2)}

    \begin{equation}\label{b2}
    	\begin{aligned}
    		\lim\limits_{\epsilon\to 0}\frac{\rho_{b,\epsilon}}{\lambda(\epsilon)}=0, 
    	\end{aligned}
    \end{equation}
    where $\rho_{b,\epsilon}$  is given in (\hyperref[h2]{\text{H2}}).


        \begin{proposition}\label{nu}
        	Assume that (\hyperref[h1]{H1}),(\hyperref[b0]{B0}) and (\hyperref[b1]{B1}) hold. Then for any fixed $m\in(0,\infty)$ and $\psi\in \mathcal{S}_m$, there is a unique solution $\nu^\psi=\big\{\big(\nu^{\psi}(t),\hat{K}^\psi(t)\big), t\in[0,T]\big\}\in C([0,T],\mathbb{R}^d)$ to the following equation:
        	\begin{equation}\label{mdp1}
        		\left\{ \begin{aligned}
        		&\mathrm{d}\nu^{\psi}(t)=b'(X^0_t,\mathcal{L}_{X^0_t})\nu^{\psi}(t)\mathrm{d}t+\sigma(X_t^0,\mathcal{L}_{X_t^0})\psi(t)\mathrm{d}t-\mathrm{d}\hat{K}^\psi_t     \\
        		&\nu^{\psi}(0)=0.
        		\end{aligned}  \right.
        	\end{equation}
        	Moreover,
        	\begin{align}\label{nulim}
        		\sup_{\psi\in \mathcal{S}^{m}}\sup_{t\in[0,T]}|\nu^{\psi}(t)|<\infty.
        	\end{align}
        	
        	\begin{proof}
        		Since $\big\{\hat{K}^\psi(t), t\in[0,T]\big\}$ is of finite variation with $\psi\in\mathcal{S}_m$, we have
        		$|\hat{K}^\psi|_0^T<\infty.$
        		
        		By (\hyperref[h1]{H1}), Remark \ref{W2} and the fact that $X^0\in C([0,T],\mathbb{R}^d)$ and $\psi\in\mathcal{S}_m$, we have
        		\begin{align}\label{slim}
        			\int_{0}^{T}\lVert \sigma(X_s^0,\mathcal{L}_{X_s^0})\rVert^2\mathrm{d}s<\infty.
        		\end{align}
        		Thus
        		\begin{align}\label{si}
        			\int_{0}^{T}\lvert\sigma(X_t^0,\mathcal{L}_{X_t^0})\psi(t)\rvert\mathrm{d}t  \nonumber
        			\leq &\left(\int_{0}^{T}\lVert\sigma(X_t^0,\mathcal{L}_{X_t^0})\rVert^2\mathrm{d}t\right)^{\frac{1}{2}}\left(\int_{0}^{T}|\psi(t)|^2\mathrm{d}t\right)^\frac{1}{2}   \nonumber\\
        			\leq &\left(\int_{0}^{T}\lvert\sigma(X_t^0,\mathcal{L}_{X_t^0})\rvert^2\mathrm{d}t\right)^{\frac{1}{2}}(2m)^{\frac{1}{2}}  \nonumber
        			<\infty.
        		\end{align}
        	
        		Due to (\hyperref[b1]{B1}) and the estimates above we can easily prove that the linear equation (\ref{mdp1}) has a unique solution $\big\{\big(\nu^{\psi}(t),\hat{K}^\psi(t)\big), t\in[0,T]\big\}$.
        		By using Gronwall's inequality we have
        		\begin{align}
        		\nu^{\psi}(t)\leq  e^{\int_{0}^{t}b'(X_s^0,\mathcal{L}_{X_s^0})\mathrm{d}s}\int_{0}^{t}\sigma(X_s^0,\mathcal{L}_{X_s^0})\psi(s)\mathrm{d}s,
        		\end{align}
        		which implies (\ref{nulim}).
        		
        	\end{proof}

        \end{proposition}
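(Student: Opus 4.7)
The plan is to treat equation (3.15) as a deterministic linear multivalued ODE driven by the (frozen) path $X^0$ and the control $\psi$, establish well-posedness via classical multivalued ODE theory, and then derive the uniform bound through Proposition \ref{multi} and Gronwall's inequality. First I would check integrability of all data along $X^0$: by (\hyperref[b1]{B1}), the (matrix-valued) map $s \mapsto b'(X^0_s,\mathcal{L}_{X^0_s})$ is in $L^1([0,T])$; by (\hyperref[h1]{H1}), Remark \ref{W2}, and the continuity of $X^0$ on $[0,T]$, the map $s\mapsto \lVert\sigma(X^0_s,\mathcal{L}_{X^0_s})\rVert$ is bounded on $[0,T]$, so Hölder together with $\psi\in\mathcal{S}_m$ (giving $\|\psi\|^2_{L^2}\le 2m$) shows that $s\mapsto\sigma(X^0_s,\mathcal{L}_{X^0_s})\psi(s)$ is in $L^1([0,T])$; this is essentially the estimate already recorded in \eqref{si}.

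For existence, I would use the Yosida approximation $A_n$ of $A$ (single-valued and Lipschitz), and for each $n$ solve the classical linear ODE
\begin{equation*}
  d\nu^n_t = b'(X^0_t,\mathcal{L}_{X^0_t})\nu^n_t\,dt + \sigma(X^0_t,\mathcal{L}_{X^0_t})\psi(t)\,dt - A_n(\nu^n_t)\,dt,\quad \nu^n(0)=0.
\end{equation*}
Applying the chain rule to $|\nu^n_t-a|^2$ with $a\in \mathrm{Int}(D(A))$ and exploiting the approximate monotonicity of $A_n$ gives uniform $L^\infty$ bounds on $\nu^n$ together with uniform total variation bounds on $\hat K^n_t := \int_0^t A_n(\nu^n_s)\,ds$ via Proposition \ref{multi}. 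Ascoli–Arzelà then yields a subsequential limit $(\nu^\psi,\hat K^\psi)$, and maximality of $A$ lets me pass to the limit in the inclusion to conclude $(\nu^\psi,\hat K^\psi)\in\mathcal{A}$ and that it solves \eqref{mdp1}. For uniqueness, if $(\nu_1,\hat K_1)$ and $(\nu_2,\hat K_2)$ are two solutions, the chain rule for $|\nu_1-\nu_2|^2$ combined with the monotonicity property \eqref{mono} kills the $\hat K$ contribution, and (\hyperref[b0]{B0}) together with boundedness of $X^0$ makes the drift term dominated by $C(s)|\nu_1(s)-\nu_2(s)|^2$ with $C\in L^1([0,T])$; Gronwall gives $\nu_1\equiv\nu_2$.

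The uniform bound \eqref{nulim} is then obtained by the same kind of chain-rule estimate on $|\nu^\psi(t)-a|^2$: the $\hat K^\psi$-term is bounded below by Proposition \ref{multi}, the drift contribution is controlled by $(\hyperref[b1]{\textbf{B1}})$ via an $L^1$ weight, the forcing contribution is handled using Young's inequality and the bound on $\|\sigma(X^0,\mathcal{L}_{X^0})\psi\|_{L^1}$ which is uniform over $\psi\in\mathcal{S}_m$, and Gronwall closes the estimate with a constant depending only on $m,T,X^0,L,L',a,\lambda_i$ but not on $\psi$. The main obstacle will be the existence step: carefully extracting a subsequence and passing to the limit in the multivalued inclusion, in particular verifying that the limit pair lies in $\mathcal{A}$, requires combining the uniform BV bound on $\hat K^n$ (from Proposition \ref{multi}) with maximality of $A$ in the sense of Definition \ref{mo}(2); the linear structure of the equation otherwise makes all the a priori estimates essentially automatic.
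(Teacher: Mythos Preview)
Your proposal is correct and considerably more detailed than the paper's own argument. The paper verifies the same integrability of $\sigma(X^0,\mathcal{L}_{X^0})\psi$ via H\"older and $\psi\in\mathcal{S}_m$ (your first paragraph matches this exactly), but then simply \emph{asserts} that existence and uniqueness for the linear multivalued equation follow from (\hyperref[b1]{B1}) together with those estimates, without any approximation or compactness step. Your Yosida-approximation route for existence and your monotonicity-plus-Gronwall argument for uniqueness supply the missing scaffolding along standard lines (in the spirit of C\'epa's work); the underlying mechanism---integrability of the data, monotonicity of $A$, Gronwall---is the same in both arguments.

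For the uniform bound \eqref{nulim}, the paper applies Gronwall directly to the linear equation and records
\[
\nu^\psi(t)\le e^{\int_0^t b'(X_s^0,\mathcal{L}_{X_s^0})\,ds}\int_0^t \sigma(X_s^0,\mathcal{L}_{X_s^0})\psi(s)\,ds,
\]
tacitly treating the $\hat K^\psi$ contribution as having a sign (i.e.\ $\langle\nu^\psi,d\hat K^\psi\rangle\ge 0$). Your chain-rule estimate on $|\nu^\psi-a|^2$ with $a\in\mathrm{Int}(D(A))$ and Proposition~\ref{multi} is a more careful way to control the $\hat K^\psi$ term and yields the same uniform-in-$\psi$ constant. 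One small point to flag: since $\nu^\psi(0)=0$, interpreting \eqref{mdp1} as a genuine $A$-constrained problem requires $0\in\overline{D(A)}$; the paper assumes this tacitly (it later treats $\nu^{\psi_n}$ as an element of $C([0,T],\overline{D(A)})$), so you should state it explicitly when you carry out the Yosida step.
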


    Now we state our main result about the moderate deviation principle.
    \begin{theorem}
    	Assume that (\hyperref[h1]{H1}), (\hyperref[h2]{H2}), (H3), (H4), (\hyperref[b0]{B0}), (\hyperref[b1]{B1}) and (\hyperref[b2]{B2}) hold, then $\left\{M^{\epsilon},\epsilon>0\right\}$ satisfies a LDP on $C([0,T],\overline{D(A)})$ with speed $\frac{\epsilon}{\lambda^2(\epsilon)}$ and the rate function $I$ given by
    	\begin{align}
    		I(g):=\frac{1}{2}\inf_{\left\{\psi\in L^2([0,T],\mathbb{R}^d), \nu^{\psi}=g\right\}}\int_{0}^{T}|\psi(s)|^2\mathrm{d}s, \quad g\in C\left([0,T],\overline{D(A)}\right),
    	\end{align}
    	where for $\psi\in L^2([0,T],\mathbb{R}^d)$, $\big(\nu^{\psi},\hat{K}^\psi\big)$ is the unique solution of (\ref{mdp1}). 
    \end{theorem}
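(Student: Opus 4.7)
My plan is to apply the abstract weak convergence criterion (Theorem~2.11) once more, this time at speed $\epsilon/\lambda^{2}(\epsilon)$, taking the limit map to be $\mathcal{G}^{0}:\mathcal{S}\ni\psi\mapsto\nu^{\psi}\in C([0,T],\overline{D(A)})$ supplied by Proposition~\ref{nu}. Since $M^{\psi_{\epsilon}}=\Gamma^{\epsilon}_{\mathcal{L}_{X^{\epsilon}}}\bigl(\sqrt{\epsilon}W+\lambda(\epsilon)\int_{0}^{\cdot}\psi_{\epsilon}(s)\,\mathrm{d}s\bigr)$, verifying the MDP reduces to checking two analogues of \textbf{(LDP)$_{1}$} and \textbf{(LDP)$_{2}$}: (MDP$_{1}$) that $\sup_{t\in[0,T]}|\nu^{\psi_{n}}(t)-\nu^{\psi}(t)|\to 0$ whenever $\psi_{n}\rightharpoonup\psi$ in $\mathcal{S}_{m}$, and (MDP$_{2}$) that $\mathbb{E}\bigl(\sup_{t\in[0,T]}|M^{\psi_{\epsilon}}_{t}-\nu^{\psi_{\epsilon}}(t)|^{2}\bigr)\to 0$ for every family $\{\psi_{\epsilon}\}\subset\mathcal{D}_{m}^{T}$.

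For (MDP$_{1}$) I would essentially replay Proposition~\ref{ld1} in the linearized setting. Uniform boundedness of $\{\nu^{\psi_{n}}\}$ is precisely (\ref{nulim}); equi-continuity follows because (\hyperref[b1]{\textbf{B1}}) makes $|b'(X^{0}_{\cdot},\mathcal{L}_{X^{0}_{\cdot}})|$ integrable (so the linear drift has uniformly absolutely continuous time integral), Hölder's inequality combined with $\psi_{n}\in\mathcal{S}_{m}$ controls the $\sigma\psi_{n}$ term, and $\hat{K}^{\psi_{n}}$ is continuous with uniformly bounded total variation. Ascoli--Arzelà then produces uniformly convergent subsequences; their limit solves (\ref{mdp1}) with test function $\psi$ by weak convergence $\psi_{n}\rightharpoonup\psi$ against the deterministic, $L^{2}$-integrable kernel $\sigma(X^{0}_{\cdot},\mathcal{L}_{X^{0}_{\cdot}})$, and the uniqueness clause of Proposition~\ref{nu} identifies the limit as $\nu^{\psi}$.

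The heart of the proof, and the main obstacle, is (MDP$_{2}$). Setting $\Phi^{\epsilon}=M^{\psi_{\epsilon}}-\nu^{\psi_{\epsilon}}$ and applying Itô's formula to $|\Phi^{\epsilon}_{t}|^{2}$ produces five pieces mirroring $I_{1}^{\epsilon},\ldots,I_{5}^{\epsilon}$ of Proposition~\ref{ld2}; the monotonicity inequality (\ref{mono}) disposes of the $K$-term nonpositively. The delicate step is the drift analysis, where one must compare
$$\frac{1}{\lambda(\epsilon)}\bigl[b_{\epsilon}(\lambda(\epsilon)M^{\psi_{\epsilon}}_{s}+X^{0}_{s},\mathcal{L}_{X^{\epsilon}_{s}})-b(X^{0}_{s},\mathcal{L}_{X^{0}_{s}})\bigr]\quad\text{with}\quad b'(X^{0}_{s},\mathcal{L}_{X^{0}_{s}})\nu^{\psi_{\epsilon}}(s).$$
I would decompose the left-hand side by three telescopic insertions: (i) replace $b_{\epsilon}$ by $b$ at the outer argument, bounded by $\rho_{b,\epsilon}/\lambda(\epsilon)\to 0$ through (\hyperref[b2]{\textbf{B2}}); (ii) change the measure argument from $\mathcal{L}_{X^{\epsilon}_{s}}$ to $\mathcal{L}_{X^{0}_{s}}$, controlled by $(L/\lambda(\epsilon))[\mathbb{E}|X^{\epsilon}_{s}-X^{0}_{s}|^{2}]^{1/2}$, which vanishes by Lemma~\ref{ep0} combined with (\ref{lame}) and (\hyperref[b2]{\textbf{B2}}); (iii) first-order Taylor expansion in the spatial variable around $X^{0}_{s}$ at fixed measure $\mathcal{L}_{X^{0}_{s}}$, yielding $b'(X^{0}_{s},\mathcal{L}_{X^{0}_{s}})M^{\psi_{\epsilon}}_{s}$ plus a remainder $R^{\epsilon}_{s}$ with $|R^{\epsilon}_{s}|\le C\lambda(\epsilon)(1+|X^{0}_{s}|^{q'}+|X^{0}_{s}+\lambda(\epsilon)M^{\psi_{\epsilon}}_{s}|^{q'})|M^{\psi_{\epsilon}}_{s}|^{2}$ via (\hyperref[b0]{\textbf{B0}}). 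Subtracting $b'(X^{0}_{s},\mathcal{L}_{X^{0}_{s}})\nu^{\psi_{\epsilon}}(s)$ isolates the linear contribution $b'(X^{0}_{s},\mathcal{L}_{X^{0}_{s}})\Phi^{\epsilon}_{s}$, whose pairing with $\Phi^{\epsilon}_{s}$ is Gronwall-absorbable thanks to (\hyperref[b1]{\textbf{B1}}), and leaves only terms tending to $0$ in $L^{1}$.

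The diffusion difference is split analogously using (\hyperref[h2]{\textbf{H2}}) and the Lipschitz hypothesis on $\sigma$, while the stochastic integral $\frac{\sqrt{\epsilon}}{\lambda(\epsilon)}\int\sigma_{\epsilon}\,\mathrm{d}W$ and its quadratic variation correction both carry the prefactor $\epsilon/\lambda^{2}(\epsilon)\to 0$ after Burkholder--Davis--Gundy, so they are negligible exactly as in Proposition~\ref{ld2}. The subtle technical prerequisite is an a~priori moment bound of the form $\sup_{0<\epsilon\le\epsilon_{0}}\mathbb{E}\bigl(\sup_{t\le T}|M^{\psi_{\epsilon}}_{t}|^{p}\bigr)<\infty$ for $p$ large enough relative to $q,q'$, needed to absorb the polynomial weights in the Taylor remainder $R^{\epsilon}$; this estimate is the real hurdle and must be established by a separate Itô argument on (\ref{mep}), using (\hyperref[h1]{\textbf{H1}}), the monotonicity inequality and the decay $\epsilon/\lambda^{2}(\epsilon)\to 0$. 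With this moment bound available, Gronwall's inequality closes the estimate for $\mathbb{E}(\sup_{t\le T}|\Phi^{\epsilon}_{t}|^{2})$ and yields the MDP.
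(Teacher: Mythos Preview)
Your outline matches the paper's strategy closely: the same limit map $\Gamma^{0}:\psi\mapsto\nu^{\psi}$, the same two-condition reduction via Theorem~2.11, the same Ascoli--Arzel\`a argument for (MDP$_{1}$), and the same It\^o decomposition with telescopic drift splitting and Taylor expansion for (MDP$_{2}$). The one substantive divergence is in how you propose to control the Taylor remainder $R^{\epsilon}_{s}$. You aim for direct $L^{2}$ convergence $\mathbb{E}\bigl(\sup_{t}|\Phi^{\epsilon}_{t}|^{2}\bigr)\to 0$, which forces you to bound expectations of products like $\lambda(\epsilon)\,|M^{\psi_{\epsilon}}_{s}|^{q'+2}\,|\Phi^{\epsilon}_{s}|$; this is why you call for moments of $M^{\psi_{\epsilon}}$ of order $p$ large relative to $q,q'$ and label that ``the real hurdle''. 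The paper sidesteps this entirely: it proves only the second-moment bound $\sup_{\epsilon}\mathbb{E}\bigl(\sup_{t}|M^{\psi_{\epsilon}}_{t}|^{2}\bigr)<\infty$ (Lemma~\ref{mdp2lemma}) and then introduces the stopping time $\tau^{a}_{\epsilon}=\inf\{t:|M^{\psi_{\epsilon}}_{t}|\ge a\}\wedge T$. On $[0,\tau^{a}_{\epsilon}]$ the process $M^{\psi_{\epsilon}}$ is deterministically bounded by $a$, so the polynomial weight in $R^{\epsilon}_{s}$ collapses to a harmless constant $C_{a}$, and one obtains $\mathbb{E}\bigl(\sup_{t}|\Phi^{\epsilon}_{t\wedge\tau^{a}_{\epsilon}}|^{2}\bigr)\to 0$ for each fixed $a$. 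The second-moment bound then gives $P(\tau^{a}_{\epsilon}<T)\le C/a^{2}$ via Markov's inequality, and sending first $\epsilon\to 0$ and then $a\to\infty$ yields $\sup_{t}|\Phi^{\epsilon}_{t}|\to 0$ in probability, which is all that (\textbf{LD}$_{2}$) actually requires. Your higher-moment route may well be feasible, but the localization argument is both simpler and demonstrably sufficient under the stated hypotheses, and it relieves you of having to establish the moment bound you flagged as the main obstacle.
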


    \begin{proof}
    	By proposition \ref{nu}, we can define a map
    	\begin{align}
    		\Gamma^0:L^2([0,T],\mathbb{R}^d)\ni \psi \mapsto \nu^\psi\in C([0,T],\overline{D(A)}),
    	\end{align}
    	where $\nu^\psi$ is the unique solution of (\ref{nu}).
    	
    	For any $\epsilon>0, m\in(0,\infty)$, and $\psi_{\epsilon}\in\mathcal{S}_{m}$, recall that $\left\{\left(M^{\psi_\epsilon}(t),\hat{K}^{\epsilon,\psi_\epsilon}(t)\right),t\in[0,T]\right\}$ is the solution to the following SDE:
    	\begin{equation}\label{mdp2m}
    		\left\{\begin{aligned}
    			\mathrm{d}M^{\psi_\epsilon}_t=&\frac{1}{\lambda(\epsilon)}\left(b_\epsilon(\lambda(\epsilon)M^{\psi_{\epsilon}}_t+X_t^0,\mathcal{L}_{X_t^\epsilon})-b(X_t^0,\mathcal{L}_{X_t^0})\right)\mathrm{d}t  \\
    			&+\frac{\sqrt{\epsilon}}{\lambda(\epsilon)}\sigma_{\epsilon}(\lambda(\epsilon)M^{\psi_{\epsilon}}+X^0_t,\mathcal{L}_{X_t^0})\mathrm{d}W_t  \\
    			&+\sigma_{\epsilon}(\lambda(\epsilon)M^{\psi_{\epsilon}}+X^0_t,\mathcal{L}_{X_t^0})\psi_{\epsilon}(t)\mathrm{d}t-\mathrm{d}\hat{K}^{\epsilon,\psi_{\epsilon}}_t,  \\
    			\,\,\,\,M^{\psi_{\epsilon}}_0=0.
    		\end{aligned}\right.
    	\end{equation}

    	By Theorem 2.11, it is sufficient to verify the following two claims:
    	
    	\noindent\textbf{(MDP)}$\bm{_{1}}$ For any given $m\in(0,\infty)$, let $\left\{\psi_n, n\in\mathbb{N}\right\}, \psi\in\mathcal{S}_m$ be such that $\psi_n\to\psi$ in $\mathcal{S}_m$ as $n\to\infty$, Then
    	$$\lim\limits_{n\to\infty}\sup_{t\in[0,T]}|\Gamma^0(\psi_n)(t)-\Gamma^0(\psi)(t)|=0.$$
    	
    	\noindent\textbf{(MDP)}$\bm{_{2}}$ For any given $m\in(0,\infty)$, let $\left\{\psi_{\epsilon},\epsilon>0\right\}\in\mathcal{D}_m^T$, then for any $\xi>0$,
    	$$\lim\limits_{\epsilon\to0}P\big(\sup_{t\in[0,T]}|M^{\psi_{\epsilon}}(t)-\Gamma^0(\psi_{\epsilon})(t)|>\xi\big)=0.$$
    	
    	The verifications of \textbf{(MDP)}$\bm{_{1}}$ and \textbf{(MDP)}$\bm{_{2}}$ will be given in the following Propositions respectively.
    	
    \end{proof}

    \begin{proposition}[\textbf{MDP}$\bm{_{1}}$]
    	For any given $m\in(0,\infty)$, let $\left\{\psi_n, n\in\mathbb{N}\right\}, \psi\in\mathcal{S}_m$ be such that $\psi_n\to\psi$ in $\mathcal{S}_m$ as $n\to\infty$, then
    	$$\lim\limits_{n\to\infty}\sup_{t\in[0,T]}|\Gamma^0(\psi_n)(t)-\Gamma^0(\psi)(t)|=0.$$
    	
    	\begin{proof}
    		Notice that $\nu^{\psi}=\Gamma^0(\psi)$ is the solution to (\ref{mdp1}), and $\nu^{\psi_n}=\Gamma^0(\psi_n)$ is the the solution to (\ref{mdp1}) with $\psi$ replaced by $\psi_n$. It is equivalent to prove the following result:
    		$$\lim\limits_{n\to\infty}\sup_{t\in[0,T]}|\nu^{\psi_n}(t)-\nu^\psi(t)|=0.$$
    		
    		The proof is similar to that of Proposition \ref{ld1}. First we will show that $\left\{\nu^{\psi_n}\right\}_{n\geq 1}$ is pre-compact in $C([0,T],\overline{D(A)})$. Note that (\ref{nulim}) implies that $\left\{\nu^{\psi_n}\right\}_{n\geq 1}$ is uniformly bounded. Let
    		\begin{align}\label{nulimm}
    			\sup_{n\geq 1}\sup_{t\in[0,T]}|\nu^{\psi_n}(t)|=C_m<\infty.
    		\end{align}
    	
    	    For any $0\leq s<t\leq T$, combining (\ref{si}) and (\ref{nulimm}), we have
    	    \begin{align*}
    	    	&|\nu^{\psi_n}_t-\nu^{\psi_n}_s|  \\
    	    	\leq &C_m\int_{s}^{t}|b'(X^0_r,\mathcal{L}_{X^0_r})|\mathrm{d}r+\left(\int_{s}^{t}|\psi(r)|^2\mathrm{d}r\right)^{\frac{1}{2}}\left(\int_{s}^{t}|\sigma(X^0_r,\mathcal{L}_{X^0_r})|^2\mathrm{d}r\right)^{\frac{1}{2}}-|\hat{K}^{\psi_n}|_s^t  \\
    	    	\leq&C_m\int_{s}^{t}|b'(X^0_r,\mathcal{L}_{X^0_r})|\mathrm{d}r+(2m)^{\frac{1}{2}}\left(\int_{s}^{t}|\sigma(X^0_r,\mathcal{L}_{X^0_r})|^2\mathrm{d}r\right)^{\frac{1}{2}}-|\hat{K}^{\psi_n}|_s^t.
    	    \end{align*}
    		By (\ref{slim}), (\ref{b1}) and (\ref{kd}) we can deduce that $\left\{\nu^{\psi_n}\right\}_{n\geq 1}$ is equi-continuous in $C([0,T],\overline{D(A)})$. Thus, $\left\{\nu^{\psi_n}\right\}_{n\geq 1}$ is pre-compact in $C([0,T],\overline{D(A)})$.
    		
    		Let $\tilde{\nu}$ be any limit of some subsequence of $\left\{\nu^{\psi_n}\right\}_{n\geq 1}$ in $C([0,T],\overline{D(A)})$ and $\tilde{K}$ be any limit of some subsequence of $\bigl\{\hat{K}^{\psi_n}\bigr\}_{n\geq 1}$. Next we will prove that $\tilde{\nu}=\nu^\psi$. 
    
    Without loss of generality, we assume
    		\begin{align}
    			\label{tnu}\lim\limits_{n\to\infty}\sup_{t\in[0,T]}|\tilde{\nu}(t)-\nu^{\psi_n}(t)|=0.  \\
    			\label{tk}\lim\limits_{n\to\infty}\sup_{t\in[0,T]}|\tilde{K}(t)-\hat{K}^{\psi_n}(t)|=0.
    		\end{align}
    		Due to (\ref{b1}) and (\ref{tnu}),
    		\begin{align*}
    			&\int_{0}^{T}|b'(X^0_t,\mathcal{L}_{X_t^0})\nu^{\psi_n}(t)-b'(X^0_t,\mathcal{L}_{X^0_t})\tilde{\nu}(t)|\mathrm{d}t \\
    			\leq &\sup_{t\in[0,T]}|\tilde{\nu}(t)-\nu^{\psi_n}(t)|\int_{0}^{T}|b'(X^0_t,\mathcal{L}_{X^0_t})|\mathrm{d}t \to 0, \quad n\to\infty.
    		\end{align*}
    		Thus, for each $t\in[0,T]$
    		\begin{align}\label{b'}
    			\int_{0}^{T}|b'(X^0_t,\mathcal{L}_{X^0_t})\nu^{\psi_n}(t)|\mathrm{d}t\to\int_{0}^{T}|b'(X^0_t,\mathcal{L}_{X^0_t})\tilde{\nu}(t)|\mathrm{d}t, \quad n\to\infty.
    		\end{align}
    		
    		Since $\psi_n\in \mathcal{S}_m$ and  $\psi_n\to\psi$ as $n\to\infty$, we have
    		\begin{align*}
    			&\int_{0}^{T}|\sigma(X^0_t,\mathcal{L}_{X_t^0})\psi_n(t)-\sigma(X^0_t,\mathcal{L}_{X^0_t})\psi(t)|\mathrm{d}t \\
    			\leq &\sup_{t\in[0,T]}|\psi_n(t)-\psi(t)|\int_{0}^{T}|\sigma(X^0_t,\mathcal{L}_{X^0_t})|\mathrm{d}t \to 0, \quad\text{as}\,\, n\to\infty.
    		\end{align*}
    		By using the similar argument as in the (\ref{ldp1c}) we can get
    		\begin{align}\label{mdp1s}
    			\int_{0}^{t}\sigma(X^0_s,\mathcal{L}_{X^0_s})\psi_n(s)\mathrm{~d}s\to \int_{0}^{t}\sigma(X^0_s,\mathcal{L}_{X^0_s})\psi(s)\mathrm{~d}s,\quad \text{as}\,\,\,n\to\infty.
    		\end{align}
    		Recall that $(\nu^{\psi_n},\hat{K}^{\psi_n})$ is the solution of (\ref{mdp1}) with $\psi$ replaced by $\psi_n$ :
    		\begin{align*}
    			\mathrm{d}\nu^{\psi_n}(t)=b'(X^0_t,\mathcal{L}_{X^0_t})\nu^{\psi}(t)\mathrm{d}t+\sigma(X_t^0,\mathcal{L}_{X_t^0})\psi_n(t)\mathrm{d}t-\mathrm{d}\hat{K}^{\psi_n}_t, \quad t\in[0,T].
    		\end{align*}
    		Letting $n\to\infty$ and combining the above estimates, we obtain that $(\tilde{\nu},\tilde{K})$ is a solution to (\ref{mdp1}). Thanks to the uniqueness of the solutions of (\ref{mdp1}) we can deduce that $\tilde{\nu}=\nu^\psi$, which completes the proof.
    		
    	\end{proof}
    \end{proposition}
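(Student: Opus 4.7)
The strategy parallels the proof of Proposition~\ref{ld1}: extract a subsequential limit by Arzel\`a--Ascoli, then identify it as $\nu^{\psi}$ via the uniqueness in Proposition~\ref{nu}. Because equation \eqref{mdp1} is linear in the unknown, everything should be cleaner than in the LDP$_{1}$ case.

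For pre-compactness of $\{\nu^{\psi_n}\}_{n\ge 1}$ in $C([0,T],\overline{D(A)})$, the uniform bound $C_m$ is immediate from \eqref{nulim}. For equicontinuity, I would estimate
\[ |\nu^{\psi_n}(t)-\nu^{\psi_n}(s)| \leq C_m \int_s^t |b'(X_r^0,\mathcal{L}_{X_r^0})| \, \mathrm{d}r + \sqrt{2m} \left( \int_s^t \|\sigma(X_r^0,\mathcal{L}_{X_r^0})\|^2 \mathrm{d}r \right)^{1/2} + |\hat K^{\psi_n}|_s^t, \]
using Cauchy--Schwarz together with $\|\psi_n\|_{L^2}^{2}\le 2m$. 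The first two terms tend to $0$ uniformly in $n$ as $|t-s|\to 0$ by (\hyperref[b1]{B1}) and \eqref{slim}. To control the variation $|\hat K^{\psi_n}|_s^t$ I would invoke Proposition~\ref{multi} on the pair $(\nu^{\psi_n},\hat K^{\psi_n})\in\mathcal{A}$ at some fixed $a\in\mathrm{Int}(D(A))$, which dominates $|\hat K^{\psi_n}|_s^t$ by the same two integrals plus a term linear in $(t-s)$. Arzel\`a--Ascoli then yields a joint subsequential limit $(\tilde\nu,\tilde K)$ obtained by uniform convergence on $[0,T]$.

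The key step is to verify that $(\tilde\nu,\tilde K)$ solves \eqref{mdp1} with driver $\psi$. Two of the three terms pass to the limit easily: uniform convergence $\nu^{\psi_n}\to\tilde\nu$ together with (\hyperref[b1]{B1}) gives $\int_0^t b'(X_r^0,\mathcal{L}_{X_r^0})\nu^{\psi_n}(r)\,\mathrm{d}r\to\int_0^t b'(X_r^0,\mathcal{L}_{X_r^0})\tilde\nu(r)\,\mathrm{d}r$, and uniform convergence of $\hat K^{\psi_n}\to\tilde K$ handles the reflection term. The \emph{main obstacle}, and the only place where the $\mathcal{S}_m$-topology is genuinely used, is the cross-term $\int_0^t \sigma(X_r^0,\mathcal{L}_{X_r^0})\psi_n(r)\,\mathrm{d}r$: the convergence $\psi_n\to\psi$ is only weak in $L^2([0,T],\ell^2)$, so no dominated-convergence argument is available. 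The crucial observation is that $r\mapsto \mathbf{1}_{[0,t]}(r)\,\sigma(X_r^0,\mathcal{L}_{X_r^0})^{\top} e$ is a deterministic element of $L^2([0,T],\ell^2)$ for every $t\in[0,T]$ and $e\in\mathbb{R}^d$ (by \eqref{slim}), so testing against it yields $\langle \int_0^t \sigma\psi_n\,\mathrm{d}r,\,e\rangle\to\langle \int_0^t\sigma\psi\,\mathrm{d}r,\,e\rangle$ componentwise, hence vectorial convergence. Once the three limits are combined, $(\tilde\nu,\tilde K)$ satisfies \eqref{mdp1} with driver $\psi$; uniqueness in Proposition~\ref{nu} forces $\tilde\nu=\nu^\psi$ and $\tilde K=\hat K^\psi$. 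Since every subsequence of $\{\nu^{\psi_n}\}$ admits a further subsequence converging uniformly to $\nu^\psi=\Gamma^0(\psi)$, the whole sequence converges, which gives the desired conclusion.
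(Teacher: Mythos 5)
Your proposal is correct and follows essentially the same route as the paper's proof: Arzel\`a--Ascoli pre-compactness of $\{(\nu^{\psi_n},\hat K^{\psi_n})\}$ followed by identification of the limit through the uniqueness in Proposition \ref{nu}, with the weak convergence $\psi_n\rightharpoonup\psi$ used only by testing $\int_0^t\sigma(X_r^0,\mathcal{L}_{X_r^0})\psi_n(r)\,\mathrm{d}r$ against fixed deterministic elements of $L^2([0,T])$, exactly as in \eqref{ldp1s}. If anything you are more careful than the printed argument at its two weak points: you control the variation term $|\hat K^{\psi_n}|_s^t$ via Proposition \ref{multi} rather than merely citing the definition of $\mathcal{V}$, and you avoid the paper's spurious intermediate bound involving $\sup_{t}|\psi_n(t)-\psi(t)|$, which is not available (and not needed) under weak $L^2$ convergence.
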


   To verify \textbf{(MDP)}$\bm{_{2}}$, we will need the following lemma. 

    \begin{lemma}\label{mdp2lemma}
    	Let $M^{\psi_\epsilon}$ be the solution to (\ref{mdp2m}), then there exists some $\kappa_0>0$ such that
    	\begin{align}\label{minfty}
    		\sup_{\epsilon\in(0,\kappa_0]}\mathbb{E}\left(\sup_{t\in[0,T]}|M^{\psi_{\epsilon}}(t)|^2\right)<\infty.
    	\end{align}
        \begin{proof}
        	By applying It$\hat{\text{o}}$ formula we have for any $t\in[0,T]$,
        	\begin{align}
        		&|M^{\psi_{\epsilon}}(t)|^2  \nonumber\\
        		=&\frac{2}{\lambda(\epsilon)}\int_{0}^{t}\left\langle M^{\psi_{\epsilon}}(s),b_{\epsilon}(\lambda(\epsilon)M^{\psi_{\epsilon}}_s+X^0_s,\mathcal{L}_{X^{\epsilon}})-b(X^0_s,\mathcal{L}_{X^0_s})\right\rangle\mathrm{d}s  \nonumber\\
        		&+\frac{2\sqrt{\epsilon}}{\lambda(\epsilon)}\int_{0}^{t}\left\langle M^{\psi_{\epsilon}}(s),\sigma_{\epsilon}(\lambda(\epsilon)M^{\psi_{\epsilon}}_s+X^0_s,\mathcal{L}_{X^\epsilon})\mathrm{~d}W_s\right\rangle \nonumber+\frac{\epsilon}{\lambda^{2}(\epsilon)}\int_{0}^{t}\lVert \sigma_{\epsilon}(\lambda(\epsilon)M^{\psi_{\epsilon}}_s+X^0_s,\mathcal{L}_{X^\epsilon})\rVert^2\mathrm{~d}s  \nonumber\\
        		&+2\int_{0}^{t}\left\langle \sigma_{\epsilon}(\lambda(\epsilon)M^{\psi_{\epsilon}}_s+X^0_s,\mathcal{L}_{X^\epsilon})\psi_\epsilon(s),M^{\psi_{\epsilon}}_s\right\rangle\mathrm{d}s  \nonumber-2\int_{0}^{t}\left\langle M^{\psi_{\epsilon}}(s),\mathrm{d}\hat{K}^{\epsilon,\psi_{\epsilon}}_s\right\rangle \nonumber\\
        		=:&I_{1}(t)+I_{2}(t)+I_{3}(t)+I_{4}(t)+I_{5}(t).
        	\end{align}
        
        
            By (\hyperref[h1]{\textbf{H1}}), (\hyperref[h2]{\textbf{H2}}) and (\hyperref[b2]{\textbf{B2}}), there exists $\epsilon_{1}>0$ such that
            \begin{align}\label{ep1}
            	\frac{\epsilon}{\lambda^2(\epsilon)}\vee \lambda(\epsilon) \vee \rho_{b,\epsilon} \vee \rho_{\sigma,\epsilon} \vee \frac{\rho_{b,\epsilon}}{\lambda(\epsilon)}\in (0,\frac{1}{2}], \quad \forall\epsilon\in(0,\epsilon_{1}].
            \end{align}
            Let $\epsilon_{2}=\epsilon_{0}\wedge\epsilon_{1}\wedge\frac{1}{2}$, where $\epsilon_{0}$ is the constant given in Lemma \ref{ep0}. Denote by $C$  a generic constant which may change from line to line and is independent of $\epsilon$.

            Recall the definition of $M^{\psi_\epsilon}$, since $(X^0,K^0)$,  $(X^{\psi_\epsilon},K^{\epsilon,\psi_\epsilon})\in\mathcal{A}$, with (\ref{mono}) we can arrive that
            \begin{align}\label{mi5}
            	I_5(t)=\frac{2}{\lambda^2(\epsilon)}\int_{0}^{t}\left\langle X^{\psi_\epsilon}_s-X^0_s,\mathrm{d}K^0_s-\mathrm{d}K^{\epsilon,\psi_{\epsilon}}_s\right\rangle \leq\,\,0.
            \end{align}

            Due to (\hyperref[h1]{\textbf{H1}}), (\hyperref[h1]{\textbf{H1}}), Lemma \ref{ep0} and (\ref{slim}), for any $\epsilon\in(0,\epsilon_{2}]$,
            \begin{align}\label{mi1}
            	I_1(t)=&\,\,\frac{2}{\lambda(\epsilon)}\int_{0}^{t}\left\langle b_{\epsilon}(\lambda(\epsilon)M^{\psi_{\epsilon}}_s+X^0_s,\mathcal{L}_{X^{\epsilon}})-b(X^0_s,\mathcal{L}_{X^0_s}),M^{\psi_{\epsilon}}(s)\right\rangle\mathrm{d}s  \nonumber\\
            	=&\,\,\frac{2}{\lambda(\epsilon)}\int_{0}^{t}\left\langle b_{\epsilon}(\lambda(\epsilon)M^{\psi_{\epsilon}}_s+X^0_s,\mathcal{L}_{X^{\epsilon}})-b(\lambda(\epsilon)M^{\psi_{\epsilon}}_s+X^0_s,\mathcal{L}_{X^{\epsilon}}),M^{\psi_{\epsilon}}(s)\right\rangle\mathrm{d}s  \nonumber\\
            	&+\frac{2}{\lambda(\epsilon)}\int_{0}^{t}\left\langle b(\lambda(\epsilon)M^{\psi_{\epsilon}}_s+X^0_s,\mathcal{L}_{X^{\epsilon}})-b(X^0_s,\mathcal{L}_{X^{\epsilon}}),M^{\psi_{\epsilon}}(s)\right\rangle\mathrm{d}s  \nonumber\\
            	&+\frac{2}{\lambda(\epsilon)}\int_{0}^{t}\left\langle b(X^0_s,\mathcal{L}_{X^{\epsilon}})-b(X^0_s,\mathcal{L}_{X^0_s}),M^{\psi_{\epsilon}}(s)\right\rangle\mathrm{d}s \nonumber\\
            	\leq&\,\,\frac{2\rho_{b,\epsilon}}{\lambda(\epsilon)}|M^{\psi_{\epsilon}}_s|\mathrm{d}s+2L\int_{0}^{t}(1+|X^0_s|^q+|\lambda(\epsilon)M^{\psi_{\epsilon}}_s+X^0_s|^q)|M^{\psi_{\epsilon}}_s|^2\mathrm{d}s  \nonumber\\
            	&+\frac{2L}{\lambda(\epsilon)}\int_{0}^{t}|M^{\psi_{\epsilon}}_s|\mathbb{W}(\mathcal{L}_{X^{\epsilon}_s},\mathcal{L}_{X^0_s})\mathrm{d}s  \nonumber\\
            	\leq&\,\,C\int_{0}^{t}|M^{\psi_{\epsilon}}_s|^2\mathrm{d}s+(\frac{2\rho_{b,\epsilon}}{\lambda(\epsilon)}+\frac{2LC_T\sqrt{\epsilon}}{\lambda(\epsilon)})\int_{0}^{t}|M^{\psi_{\epsilon}}_s|\mathrm{d}s  \nonumber\\
            	\leq&\,\, C\int_{0}^{t}|M^{\psi_{\epsilon}}_s|^2\mathrm{d}s+C.
            \end{align}

            For $I_3(t)$, we have
            \begin{align}\label{mi3}
            	I_3(t)=&\,\,\frac{\epsilon}{\lambda^{2}(\epsilon)}\int_{0}^{t}\lVert \sigma_{\epsilon}(\lambda(\epsilon)M^{\psi_{\epsilon}}_s+X^0_s,\mathcal{L}_{X_s^\epsilon})\rVert^2\mathrm{~d}s  \nonumber\\
            	\leq&\,\, \frac{\epsilon}{\lambda^{2}(\epsilon)}\int_{0}^{t}\lVert \sigma_{\epsilon}(\lambda(\epsilon)M^{\psi_{\epsilon}}_s+X^0_s,\mathcal{L}_{X_s^\epsilon})-\sigma(\lambda(\epsilon)M^{\psi_{\epsilon}}_s+X^0_s,\mathcal{L}_{X_s^\epsilon})\rVert^2\mathrm{~d}s  \nonumber\\
            	&+\frac{\epsilon}{\lambda^{2}(\epsilon)}\int_{0}^{t}\lVert \sigma(\lambda(\epsilon)M^{\psi_{\epsilon}}_s+X^0_s,\mathcal{L}_{X_s^\epsilon})-\sigma(X^0_s,\mathcal{L}_{X^0_s})\rVert^2\mathrm{~d}s  \nonumber\\
            	&+\frac{\epsilon}{\lambda^{2}(\epsilon)}\int_{0}^{t}\lVert \sigma(X^0_s,\mathcal{L}_{X^0_s})\rVert^2\mathrm{~d}s  \nonumber \\
            	\leq&\,\, \frac{C\epsilon\rho_{\sigma,\epsilon}}{\lambda^{2}(\epsilon)}+
            	C\epsilon\int_{0}^{t}|M^{\psi_{\epsilon}}_s|^2\mathrm{d}s+\frac{\epsilon}{\lambda^{2}(\epsilon)}\int_{0}^{t}\mathbb{W}^2(\mathcal{L}_{X^{\epsilon}_s},\mathcal{L}_{X^0_s})\mathrm{d}s  \nonumber\\
            	&+\frac{\epsilon}{\lambda^{2}(\epsilon)}\int_{0}^{t}\lVert \sigma(X^0_s,\mathcal{L}_{X^0_s})\rVert^2\mathrm{~d}s  \nonumber \\
            	\leq&\,\, C\int_{0}^{t}|M^{\psi_{\epsilon}}_s|^2\mathrm{d}s+C.
            \end{align}

            For $I_4(t)$,  we have 
            \begin{align}\label{mi44}
            	I_4(t)=&\,2\int_{0}^{t}\left\langle \sigma_{\epsilon}(\lambda(\epsilon)M^{\psi_{\epsilon}}_s+X^0_s,\mathcal{L}_{X^\epsilon})\psi_\epsilon(s),M^{\psi_{\epsilon}}_s\right\rangle\mathrm{d}s  \nonumber\\
            	=&\,2\int_{0}^{t}\left[\sigma_{\epsilon}(\lambda(\epsilon)M^{\psi_{\epsilon}}_s+X^0_s,\mathcal{L}_{X^\epsilon})-\sigma(\lambda(\epsilon)M^{\psi_{\epsilon}}_s+X^0_s,\mathcal{L}_{X^\epsilon_s})\right]\psi_\epsilon(s),M^{\psi_{\epsilon}}_s\mathrm{d}s  \nonumber\\
            	&+2\int_{0}^{t}\left\langle \left[\sigma(\lambda(\epsilon)M^{\psi_{\epsilon}}_s+X^0_s,\mathcal{L}_{X^\epsilon})-\sigma(X^0_s,\mathcal{L}_{X^0_s})\right]\psi_\epsilon(s),M^{\psi_{\epsilon}}_s\right\rangle\mathrm{d}s  \nonumber \\
            	&+2\int_{0}^{t}\left\langle \sigma(X^0_s,\mathcal{L}_{X^0_s})\psi_\epsilon(s),M^{\psi_{\epsilon}}_s\right\rangle\mathrm{d}s  \nonumber\\
            \leq&\,\,2\rho_{\sigma,\epsilon}\int_{0}^{t}|M^{\psi_{\epsilon}}_s||\psi_\epsilon(s)|\mathrm{d}s+C\int_{0}^{t}(\lambda(\epsilon)|M^{\psi_{\epsilon}}_s|+\mathbb{W}_2(\mathcal{L}_{X^\epsilon_s},\mathcal{L}_{X^0_s}))|M^{\psi_{\epsilon}}_s||\psi_\epsilon(s)|\mathrm{d}s  \nonumber\\
            	&+\,C\int_{0}^{t}\lVert \sigma(X^0_s,\mathcal{L}_{X^0_s})\rVert|M^{\psi_{\epsilon}}_s||\psi_\epsilon(s)|\mathrm{d}s . \nonumber
            \end{align}
            By H{\"o}lder inequality, Young's inequality and $\psi_\epsilon\in\mathcal{S}_m$, we get
            \begin{align}\label{mi4}	I_4(t)
            	\leq&\,\, \left(1+C\left(\mathbb{E}(\sup_{s\in[0,T]}|X^\epsilon_s-X^0_s|^2
            	)\right)^{\frac{1}{2}}\right)\int_{0}^{t}|M^{\psi_{\epsilon}}_s||\psi_\epsilon(s)|\mathrm{d}s+C\lambda(\epsilon)\int_{0}^{t}|M^{\psi_{\epsilon}}_s|^2|\psi_\epsilon(s)|\mathrm{d}s   \nonumber\\
            	&+\, C\int_{0}^{t}\lVert \sigma(X^0_s,\mathcal{L}_{X^0_s})\rVert|M^{\psi_{\epsilon}}_s||\psi_\epsilon(s)|\mathrm{d}s  \nonumber\\
            	\leq&\,\, C\int_{0}^{t}|M^{\psi_{\epsilon}}_s|^2\mathrm{d}s+C\int_{0}^{t}|\psi_\epsilon(s)|^2\mathrm{d}s+C\int_{0}^{t}|M^{\psi_{\epsilon}}_s|^2|\psi_\epsilon(s)|^2\mathrm{d}s+C\int_{0}^{t}\lVert \sigma(X^0_s,\mathcal{L}_{X^0_s})\rVert^2\mathrm{d}s \nonumber\\
            	\leq&\,\, C\int_{0}^{t}\left(1+|\psi_\epsilon(s)|^2\right)|M^{\psi_{\epsilon}}_s|^2\mathrm{d}s+C.
            \end{align}

            Combining the above estimates (\ref{mi5}), (\ref{mi1}), (\ref{mi3}) and (\ref{mi4}) together and by applying Gronwall's inequality we obtain that for any $\epsilon\in(0,\epsilon_{2}]$, $t\in[0,T]$,
            \begin{align}
            	|M^{\psi_{\epsilon}}(t)|^2\leq e^{\int_{0}^{T}(1+|\psi_\epsilon(s)|^2)\mathrm{d}s}\left\{C+\sup_{s\in[0,T]}|I_2(t)|\right\}.
            \end{align}

            Since $\psi_\epsilon\in\mathcal{S}_m$ $P$-a.s., then for any $\epsilon\in(0,\epsilon_{2}]$ we have
            \begin{align}
            	\frac{1}{2}\int_{0}^{T}|\psi_\epsilon(s)|^2\mathrm{d}s\leq m, \quad P\text{-a.s.}.
            \end{align}

            Therefore, there exists a constant $\zeta\in(0,\infty)$ such that for any $\epsilon\in(0,\epsilon_{2}]$,
            \begin{align}\label{mgron}
            	\mathbb{E}\left(\sup_{t\in[0,T]}|M^{\psi_\epsilon}(t)|^2\right)\leq \zeta\left\{1+\mathbb{E}\left(\sup_{t\in[0,T]}|I_2(t)|\right)\right\}.
            \end{align}

            For $I_2(t)$, By Burkholder-Davis-Gundy’s inequality, (\hyperref[h1]{\textbf{H1}}), (\hyperref[h2]{\textbf{H2}}), Young's inequality, Lemma \ref{ep0}, (\ref{slim}) and (\ref{mi3}), for any $\epsilon\in(0,\epsilon_{2}]$,
            \begin{align}\label{mi2}
            	\mathbb{E}\left(\sup_{t\in[0,T]}|I_2(t)|\right) \nonumber
            	\leq&\,\, \frac{C\sqrt{\epsilon}}{\lambda(\epsilon)}\mathbb{E}\left(\int_{0}^{T}|M^{\psi_\epsilon}_s|^2\lVert\sigma_{\epsilon}(\lambda(\epsilon)M^{\psi_\epsilon}_s+X^0_s,\mathcal{L}_{X^\epsilon_s})\rVert^2\mathrm{d}s\right)^{\frac{1}{2}} \nonumber\\
            	\leq&\,\, \frac{C\sqrt{\epsilon}}{\lambda(\epsilon)}\mathbb{E}\left(\sup_{s\in[0,T]}|M^{\psi_\epsilon}_s|^2\right)+\frac{C\sqrt{\epsilon}}{\lambda(\epsilon)}\mathbb{E}\int_{0}^{T}\lVert\sigma_{\epsilon}(\lambda(\epsilon)M^{\psi_\epsilon}_s+X^0_s,\mathcal{L}_{X^\epsilon_s})\rVert^2\mathrm{d}s  \nonumber\\
            	\leq&\,\, \frac{C\sqrt{\epsilon}}{\lambda(\epsilon)}\mathbb{E}\left(\sup_{s\in[0,T]}|M^{\psi_\epsilon}_s|^2\right)+\frac{C\sqrt{\epsilon}}{\lambda(\epsilon)}\left(C\rho_{\sigma,\epsilon}^2+C\lambda^2(\epsilon)\mathbb{E}\int_{0}^{T}|M^{\psi_\epsilon}_s|^2\mathrm{d}s\right)  \nonumber\\
            	&+\, \frac{C\sqrt{\epsilon}}{\lambda(\epsilon)}\left(\int_{0}^{T}\mathbb{W}_2^2(\mathcal{L}_{X_s^\epsilon},\mathcal{L}_{X_s^0})\mathrm{d}s+\int_{0}^{T}\lVert\sigma(X^0_s,\mathcal{L}_{X^0_s})\rVert^2\mathrm{d}s\right)  \nonumber \\
            	\leq&\,\, C\left(\frac{\sqrt{\epsilon}}{\lambda(\epsilon)}+\sqrt{\epsilon}\lambda(\epsilon)\right)\mathbb{E}\left(\sup_{s\in[0,T]}|M^{\psi_\epsilon}_s|^2\right)+C.
            \end{align}

            By substituting (\ref{mi2}) back into (\ref{mgron}), we obtain for any $\epsilon\in(0,\epsilon_{2}]$,
            \begin{align}
            	\left(1-\frac{C\sqrt{\epsilon}}{\lambda(\epsilon)}-C\sqrt{\epsilon}\lambda(\epsilon)\right)\mathbb{E}\left(\sup_{s\in[0,T]}|M^{\psi_\epsilon}_s|^2\right)\leq C.
            \end{align}

            Then there exists a constant $\kappa_0>0$ such that
            \begin{align*}
            	\sup_{\epsilon\in(0,\kappa_0]}\mathbb{E}\left(\sup_{t\in[0,T]}|M^{\psi_{\epsilon}}(t)|^2\right)<\infty,
            \end{align*}
            which completes the proof.
        \end{proof}

    \end{lemma}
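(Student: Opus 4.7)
The plan is to start from It\^o's formula applied to $|M^{\psi_\epsilon}_t|^2$, using the SDE (\ref{mdp2m}). This yields five terms: a drift term scaled by $\frac{1}{\lambda(\epsilon)}$, a stochastic integral scaled by $\frac{\sqrt{\epsilon}}{\lambda(\epsilon)}$, an It\^o correction scaled by $\frac{\epsilon}{\lambda^2(\epsilon)}$, a control term involving $\sigma_\epsilon\psi_\epsilon$, and a $\hat K^{\epsilon,\psi_\epsilon}$ contribution. The $\hat K$ term will be handled first and disposed of cleanly: writing $M^{\psi_\epsilon}=(X^{\psi_\epsilon}-X^0)/\lambda(\epsilon)$ and recalling $(X^{\psi_\epsilon},K^{\epsilon,\psi_\epsilon}),(X^0,K^0)\in\mathcal{A}$, the monotonicity inequality (\ref{mono}) gives $\int_0^t\langle M^{\psi_\epsilon}_s,\mathrm{d}\hat K^{\epsilon,\psi_\epsilon}_s\rangle\ge 0$, so this term is nonpositive on the right-hand side.

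Next, the drift is split via the telescoping $b_\epsilon(\lambda M+X^0,\mathcal L_{X^\epsilon})-b(X^0,\mathcal L_{X^0})=[b_\epsilon-b](\lambda M+X^0,\mathcal L_{X^\epsilon})+[b(\lambda M+X^0,\mathcal L_{X^\epsilon})-b(X^0,\mathcal L_{X^\epsilon})]+[b(X^0,\mathcal L_{X^\epsilon})-b(X^0,\mathcal L_{X^0})]$. The first piece is controlled by $\rho_{b,\epsilon}/\lambda(\epsilon)$, which is bounded thanks to (\hyperref[b2]{B2}). The second piece, after contracting against $M^{\psi_\epsilon}$, becomes $\frac{1}{\lambda^2(\epsilon)}\langle \lambda M,b(\lambda M+X^0,\mu)-b(X^0,\mu)\rangle\le L|M^{\psi_\epsilon}|^2$ by the monotonicity clause of (\hyperref[h1]{H1}); this is the crucial reason the scaling survives. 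The third piece uses the $\mathbb W_2$-Lipschitz clause together with Lemma \ref{ep0}, giving a factor $\lambda(\epsilon)^{-1}\sqrt{\mathbb E|X^\epsilon-X^0|^2}=O\!\bigl(\sqrt{\epsilon/\lambda^2(\epsilon)+\rho_{b,\epsilon}^2/\lambda^2(\epsilon)}\bigr)$, which is bounded by (\ref{lame}) and (\hyperref[b2]{B2}), so Young's inequality absorbs it into $|M^{\psi_\epsilon}|^2+1$.

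The It\^o correction and the control term are handled in parallel. Using the decomposition $\sigma_\epsilon(\lambda M+X^0,\mathcal L_{X^\epsilon})=[\sigma_\epsilon-\sigma]+[\sigma(\lambda M+X^0,\mathcal L_{X^\epsilon})-\sigma(X^0,\mathcal L_{X^0})]+\sigma(X^0,\mathcal L_{X^0})$, assumption (\hyperref[h1]{H1}), (\hyperref[h2]{H2}) and Lemma \ref{ep0} yield $\|\sigma_\epsilon\|^2\le C(\rho_{\sigma,\epsilon}^2+\lambda^2(\epsilon)|M^{\psi_\epsilon}|^2+\mathbb W_2^2+\|\sigma(X^0,\mathcal L_{X^0})\|^2)$; multiplied by $\epsilon/\lambda^2(\epsilon)$ this contributes $\le C(1+\epsilon|M^{\psi_\epsilon}|^2)$ after using (\ref{lame}) and the integrability (\ref{slim}). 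For the control term, Cauchy--Schwarz gives $2|\langle M,\sigma_\epsilon\psi_\epsilon\rangle|\le C(1+|\psi_\epsilon|^2)|M^{\psi_\epsilon}|^2+C|\psi_\epsilon|^2\|\sigma(X^0,\mathcal L_{X^0})\|^2$, where $\int_0^T|\psi_\epsilon|^2\mathrm{d}s\le 2m$ a.s.\ since $\psi_\epsilon\in\mathcal S_m$. The martingale is handled by Burkholder--Davis--Gundy, producing $C\sqrt{\epsilon}/\lambda(\epsilon)$ times a factor that combines $\sup_{s\le T}|M^{\psi_\epsilon}_s|^2$ (absorbed into the LHS since the prefactor is small) and $\int_0^T\|\sigma_\epsilon\|^2\mathrm{d}s$ (handled as before).

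Assembling these estimates and applying the stochastic Gronwall lemma with the random integrand $1+|\psi_\epsilon(s)|^2$, whose integral is uniformly bounded by $1+2m$, one obtains $\mathbb E\!\sup_{t\in[0,T]}|M^{\psi_\epsilon}_t|^2\le C_{m,T}$ for all sufficiently small $\epsilon$. The main obstacle I anticipate is bookkeeping: one must check that every appearance of the potentially dangerous factor $1/\lambda(\epsilon)$ or $1/\lambda^2(\epsilon)$ is cancelled either by the monotonicity inequality in (\hyperref[h1]{H1}) (for the drift), by assumption (\hyperref[b2]{B2}) (for the $b_\epsilon-b$ and $\mathcal L_{X^\epsilon}-\mathcal L_{X^0}$ pieces), or by the MDP speed condition $\epsilon/\lambda^2(\epsilon)\to 0$ (for the diffusion-related terms), and that the coefficient of $\sup_t|M^{\psi_\epsilon}_t|^2$ coming out of BDG is genuinely $o(1)$ so that it may be absorbed to the left.
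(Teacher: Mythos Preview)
Your proposal is correct and follows essentially the same architecture as the paper's proof: It\^o's formula, the same five-term decomposition, disposal of the $\hat K$ term via the monotonicity (\ref{mono}), telescoping of $b_\epsilon-b$ and $\sigma_\epsilon-\sigma$, pathwise Gronwall with the random integrand $1+|\psi_\epsilon|^2$, and finally BDG for the stochastic integral with absorption of the small $\sqrt{\epsilon}/\lambda(\epsilon)$ prefactor into the left-hand side. The one noteworthy difference is that for the drift increment in the spatial variable you invoke the one-sided monotonicity clause $\langle x-x',b(x,\mu)-b(x',\mu)\rangle\le L|x-x'|^2$ of (\hyperref[h1]{H1}), which gives $2L|M^{\psi_\epsilon}|^2$ directly, whereas the paper writes this step using the polynomial-growth Lipschitz clause and obtains the factor $(1+|X^0_s|^q+|\lambda(\epsilon)M^{\psi_\epsilon}_s+X^0_s|^q)$ before absorbing it into a generic constant; your route is cleaner and avoids having to justify why the $|\lambda(\epsilon)M^{\psi_\epsilon}_s|^q$ contribution can be controlled before the a~priori bound is established.
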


    Finally we are in the position to verify \textbf{(MDP)}$\bm{_{2}}$.

    \begin{proposition}[\textbf{MDP}$\bm{_{2}}$]
    	For any given $m\in(0,\infty)$, $\psi_\epsilon\in\mathcal{D}^T_m$, for any $\xi>0$, we have
    	\begin{align*}
    		\lim\limits_{\epsilon\to 0}P\left(\sup_{t\in[0,T]}|M^{\psi_\epsilon}_t-\nu^{\psi_\epsilon}_t|\geq\xi\right)=0.
    	\end{align*}

        \begin{proof}
        	For each fixed $\epsilon>0$ and $a\in\mathbb{N}$, we can define a stopping time
        	\begin{align}
        		\tau_\epsilon^a=\inf\big\{t\geq 0:|M^{\psi_\epsilon}(t)|\geq a\big\}\wedge T.
        	\end{align}
            Due to Lemma \ref{mdp2lemma} and Markov's inequality, we get
            \begin{align}\label{cheby}
            	P(\tau_\epsilon^a<T)\leq\frac{\mathbb{E}(\sup_{t\in[0,T]}|M^{\psi_\epsilon}(t)|^2)}{a^2}\leq \frac{C}{a^2}, \quad \forall\epsilon\in(0,\kappa_0].
            \end{align}

            Denote $Q^\epsilon(t)=M^{\psi_\epsilon}(t)-\nu^{\psi_\epsilon}(t)$ for each $t\in[0,T]$. We have
            \begin{align}
            	\mathrm{d}Q^\epsilon_t=&\,\,\mathrm{d}M^{\psi_\epsilon}_t-\mathrm{d}\nu^{\psi_\epsilon}_t   \nonumber\\
            	=&\left(\frac{1}{\lambda(\epsilon)}\left(b_\epsilon(\lambda(\epsilon)M^{\psi_\epsilon}_t+X^0_t,\mathcal{L}_{X^\epsilon_t})-b(X^0_t,\mathcal{L}_{X^0_t})\right)-b'(X^0_t,\mathcal{L}_{X^0_t})\nu^{\psi_\epsilon}_t\right)\mathrm{d}t   \nonumber\\
            	&+\left(\sigma_{\epsilon}(\lambda(\epsilon)M^{\psi_\epsilon}_t+X^0_t,\mathcal{L}_{X^\epsilon_t})-\sigma(X^0_t,\mathcal{L}_{X^0_t})\right)\psi_\epsilon(t)\mathrm{d}t  \nonumber \\
            	&+\frac{\sqrt{\epsilon}}{\lambda(\epsilon)}\sigma_{\epsilon}(\lambda(\epsilon)M^{\psi_\epsilon}_t+X^0_t,\mathcal{L}_{X^\epsilon_t})\mathrm{d}W_t  \nonumber-\left(\mathrm{d}\hat{K}^{\epsilon,\psi_\epsilon}-\mathrm{d}\hat{K}^{\psi_\epsilon}\right)
            \end{align}
            By It$\hat{\text{o}}$'s formula, we get
            \begin{align}\label{mdpq}
            	|Q^\epsilon_{t\wedge\tau_\epsilon^a}|^2=&2\int_{0}^{t\wedge\tau_\epsilon^a}\left\langle \frac{1}{\lambda(\epsilon)}\left(b_\epsilon(\lambda(\epsilon)M^{\psi_\epsilon}_s+X^0_s,\mathcal{L}_{X^\epsilon_s})-b(X^0_s,\mathcal{L}_{X^0_s})\right)-b'(X^0_s,\mathcal{L}_{X^0_s})\nu^{\psi_\epsilon}_s,Q^\epsilon_s\right\rangle\mathrm{d}s  \nonumber\\
            	&+\frac{2\sqrt{\epsilon}}{\lambda(\epsilon)}\int_{0}^{t\wedge\tau_\epsilon^a}\left\langle Q^\epsilon_s,\sigma_{\epsilon}(\lambda(\epsilon)M^{\psi_\epsilon}_s+X^0_s,\mathcal{L}_{X^\epsilon_s})\mathrm{d}W_s\right\rangle  \nonumber\\
            	&+\frac{\epsilon}{\lambda^2(\epsilon)}\int_{0}^{t\wedge\tau_\epsilon^a}\lVert\sigma_{\epsilon}(\lambda(\epsilon)M^{\psi_\epsilon}_s+X^0_s,\mathcal{L}_{X^\epsilon_s})\rVert^2\mathrm{d}s  \nonumber\\
            	&+2\int_{0}^{t\wedge\tau_\epsilon^a}\left\langle \left(\sigma_{\epsilon}(\lambda(\epsilon)M^{\psi_\epsilon}_s+X^0_s,\mathcal{L}_{X^\epsilon_s})-\sigma(X^0_s,\mathcal{L}_{X^0_s})\right)\psi_\epsilon(s),Q^\epsilon_s\right\rangle\mathrm{d}s  \nonumber\\
            	&-2\int_{0}^{t\wedge\tau_\epsilon^a}\left\langle Q^\epsilon_s,\mathrm{d}\hat{K}^{\epsilon,\psi_\epsilon}-\mathrm{d}\hat{K}^{\psi_\epsilon}\right\rangle  \nonumber\\
            	=:&\,\,\hat{I}_1(t)+\hat{I}_2(t)+\hat{I}_3(t)+\hat{I}_4(t)+\hat{I}_5(t)
            \end{align}

            For $\hat{I}_{5}(t)$, by (\ref{mono}) we have for each $\epsilon\in(0,\epsilon_{3}]$,
            \begin{align}\label{hati5}
            	\hat{I}_{5}(t)=-2\int_{0}^{t\wedge\tau_\epsilon^a}\left\langle Q^\epsilon_s,\mathrm{d}\hat{K}^{\epsilon,\psi_\epsilon}-\mathrm{d}\hat{K}^{\psi_\epsilon}\right\rangle\leq 0.
            \end{align}

            By (\ref{nulim}) and $\psi_\epsilon\in\mathcal{D}_m^T$, there exists some $\Omega^0\in\mathcal{F}$ with $P(\Omega^0)=1$ such that
            \begin{align}\label{gamma}
            	\gamma:=\sup_{\epsilon\in(0,\kappa_0]}\sup_{\omega\in\Omega^0, t\in[0,T]}|\nu^{\psi_\epsilon}_t(\omega)|<\infty.
            \end{align}

            For $\hat{I}_1(t)$,
            \begin{align}\label{hati1}
            	\hat{I}_1(t)=&\frac{2}{\lambda(\epsilon)}\int_{0}^{t\wedge\tau_\epsilon^a}\left\langle b_\epsilon(\lambda(\epsilon)M^{\psi_\epsilon}_s+X^0_s,\mathcal{L}_{X^\epsilon_s})-b(\lambda(\epsilon)M^{\psi_\epsilon}_s+X^0_s,\mathcal{L}_{X^\epsilon_s}),Q^{\psi_\epsilon}_s\right\rangle\mathrm{d}s  \nonumber\\
            	&+\frac{2}{\lambda(\epsilon)}\int_{0}^{t\wedge\tau_\epsilon^a}\left\langle b(\lambda(\epsilon)M^{\psi_\epsilon}_s+X^0_s,\mathcal{L}_{X^\epsilon_s})-b(\lambda(\epsilon)M^{\psi_\epsilon}_s+X^0_s,\mathcal{L}_{X^0_s}),Q^{\psi_\epsilon}_s\right\rangle\mathrm{d}s  \nonumber\\
            	&+2\int_{0}^{t\wedge\tau_\epsilon^a}\left\langle \frac{1}{\lambda(\epsilon)}\left(b(\lambda(\epsilon)M^{\psi_\epsilon}_s+X^0_s,\mathcal{L}_{X^0_s})-b(X^0_s,\mathcal{L}_{X^0_s})\right)-b'(X^0_s,\mathcal{L}_{X^0_s})\nu^{\psi_\epsilon}_s,Q^{\psi_\epsilon}_s\right\rangle\mathrm{d}s  \nonumber\\
            	=&\,\,\hat{I}_{1,1}(t)+\hat{I}_{1,2}(t)+\hat{I}_{1,3}(t).
            \end{align}
            Due to (\hyperref[h1]{\textbf{H1}}), (\hyperref[h2]{\textbf{H2}}) and Lemma \ref{ep0} we have
            \begin{align}\label{hati112}
            	\hat{I}_{1,1}(t)+\hat{I}_{1,2}(t)\leq&\, \frac{2\rho_{b,\epsilon}}{\lambda(\epsilon)}\int_{0}^{t\wedge\tau_\epsilon^a}|Q^{\psi_\epsilon}_s|\mathrm{d}s+\frac{2L}{\lambda(\epsilon)}\int_{0}^{t\wedge\tau_\epsilon^a}\left(\mathbb{E}|X^\epsilon_s-X^0_s|^2\right)^{\frac{1}{2}}|Q^{\psi_\epsilon}_s|\mathrm{d}s  \nonumber\\
            	\leq& \, \frac{2\rho_{b,\epsilon}+2L(\epsilon+\rho_{b,\epsilon}^2+\epsilon\rho_{\sigma,\epsilon}^2)^{\frac{1}{2}}}{\lambda(\epsilon)}(a+\gamma)T.
            \end{align}
            Let $\epsilon_{3}=\kappa_0\wedge\epsilon_{2}$, then for any $\epsilon\in(0,\epsilon_{3}]$, by using the mean value theorem, (\hyperref[b0]{\textbf{B0}}) and (\hyperref[b1]{\textbf{B1}}), we obtain that there exists $\theta_{\epsilon}(s)\in[0,1]$ such that
            \begin{align*}
            	\hat{I}_{1,3}(t)=&\,\,2\int_{0}^{t\wedge\tau_{\epsilon}^{a}}\left\langle \frac{b(\lambda(\epsilon)M^{\psi_\epsilon}_s+X^{0}_s,\mathcal{L}_{X^{0}_s})-b(X^{0}_s,\mathcal{L}_{X^{0}_s})}{\lambda(\epsilon)M^{\psi_\epsilon}_s}M^{\psi_\epsilon}_s-b'(X^{0}_s,\mathcal{L}_{X^{0}_s})\nu^{\psi_\epsilon}_s,Q^{\psi_\epsilon}_s\right\rangle\mathrm{d}s \nonumber\\
            	\leq&\,\,2\int_{0}^{t\wedge\tau_{\epsilon}^{a}}\left\langle b'(\lambda(\epsilon)M^{\psi_\epsilon}_s\theta_{\epsilon}(s)+X^{0}_s,\mathcal{L}_{X^0_s})M^{\psi_\epsilon}_s-b'(X^0_s,\mathcal{L}_{X^0_s})\nu^{\psi_\epsilon}_s,Q^{\psi_\epsilon}_s\right\rangle\mathrm{d}s  \nonumber\\
            	=&\, 2\int_{0}^{t\wedge\tau_{\epsilon}^{a}}\left\langle b'(\lambda(\epsilon)M^{\psi_\epsilon}_s\theta_{\epsilon}(s)+X^0_s,\mathcal{L}_{X^0_s})M^{\psi_\epsilon}_s-b'(X^0_s,\mathcal{L}_{X^0_s})M^{\psi_\epsilon}_s,Q^{\psi_\epsilon}_s\right\rangle\mathrm{d}s  \nonumber\\
            	&+\, 2\int_{0}^{t\wedge\tau_{\epsilon}^{a}}\left\langle b'(X^0_s,\mathcal{L}_{X^0_s})M^{\psi_\epsilon}_s-b'(X^0_s,\mathcal{L}_{X^0_s})\nu^{\psi_\epsilon}_s,Q^{\psi_\epsilon}_s\right\rangle\mathrm{d}s  \nonumber\\
            	\leq&\,\, 2\int_{0}^{t\wedge\tau_{\epsilon}^{a}}| b'(\lambda(\epsilon)M^{\psi_\epsilon}_s\theta_{\epsilon}(s)+X^0_s,\mathcal{L}_{X^0_s})-b'(X^0_s,\mathcal{L}_{X^0_s})||M^{\psi_\epsilon}_s||Q^{\psi_\epsilon}_s|\mathrm{d}s   \nonumber\\
            	&+\, 2\int_{0}^{t\wedge\tau_{\epsilon}^{a}}|b'(X^0_s,\mathcal{L}_{X^0_s})||Q^{\psi_\epsilon}_s|^{2}\mathrm{d}s \nonumber\\
            	\leq&\,\, L'\lambda(\epsilon)\int_{0}^{t\wedge\tau_{\epsilon}^{a}}\left(1+|X^0_s|^{q'}+|\lambda(\epsilon)M^{\psi_\epsilon}_s+X^0_s|^{q'}\right)|M^{\psi_\epsilon}_s|^{2}|Q^{\psi_\epsilon}_s|\mathrm{d}s  \nonumber\\
            	&+\, 2\int_{0}^{t\wedge\tau_{\epsilon}^{a}}|b'(X^0_s,\mathcal{L}_{X^0_s})||Q^{\psi_\epsilon}_s|^{2}\mathrm{d}s
            \end{align*}
             Denote $C_a=L'\left(1+\sup_{s\in[0,T]}|X_{s}^{0}|^{q'}+|a+\sup_{s\in[0,T]}|X_{s}^{0}||^{q'}\right)a^{2}(a+\gamma)T$.  Notice that $C_a$ is independent of $\epsilon$, thus we get
            \begin{align}\label{hati132}
            	\hat{I}_{1,3}(t)\leq\, C_a\lambda(\epsilon)+2\int_{0}^{t}|b'(X^0_{s\wedge\tau_\epsilon^a},\mathcal{L}_{X^0_{s\wedge\tau_\epsilon^a}})||Q^{\psi_\epsilon}_{s\wedge\tau_\epsilon^a}|^2\mathrm{d}s
            \end{align}

            Substituting (\ref{hati112}) and (\ref{hati132}) back into (\ref{mdpq}), by Gronwall's inequality we have for any $\epsilon\in(0,\epsilon_{3}],\,\, t\in[0,T]$,

            \begin{align}\label{hatgron}
            	\sup_{t\in[0,T]}|Q^\epsilon_{t\wedge\tau_\epsilon^a}|^2\leq e^{2\int_{0}^{T}|b'(X^0_s,\mathcal{L}_{X^0_s})|\mathrm{d}s}\left[C_a(\frac{\rho_{b,\epsilon}+\lambda^2(\epsilon)+(\epsilon+\rho_{b,\epsilon}^2+\epsilon\rho_{\sigma,\epsilon})^{\frac{1}{2}}}{\lambda(\epsilon)})+\sum_{i=2}^{4}\hat{I}_i(t)\right]
            \end{align}
            Due to (\hyperref[b1]{\textbf{B1}}) we have
            \begin{align}
            	\Xi:=e^{2\int_{0}^{T}|b'(X^0_s,\mathcal{L}_{X^0_s})|\mathrm{d}s}<\infty.
            \end{align}

            For $\hat{I}_{2}(t)$, by Burkholder-Davis-Gundy’s inequality and (\ref{slim}), we obtain that
            \begin{align}\label{hati23}
            	&\mathbb{E}\left(\sup_{t\in[0,T]}|\hat{I}_{2}(t)|\right)+\mathbb{E}\left(\sup_{t\in[0,T]}|\hat{I}_{3}(t)|\right)  \nonumber\\
            	\leq&\,\frac{2\sqrt{\epsilon}}{\lambda(\epsilon)}\mathbb{E}\left(\int_{0}^{t\wedge\tau_\epsilon^a} |Q^\epsilon_s|^2\lVert\sigma_{\epsilon}(\lambda(\epsilon)M^{\psi_\epsilon}_s+X^0_s,\mathcal{L}_{X^\epsilon_s})\rVert^2\mathrm{d}s\right)^\frac{1}{2}  \nonumber\\
            	\leq&\,\, \frac{1}{4}\mathbb{E}\left(\sup_{s\in[0,T]}|Q^\epsilon_{s\wedge\tau_\epsilon^a}|^2\right)+\frac{C\epsilon}{\lambda^2(\epsilon)}\mathbb{E}\int_{0}^{t\wedge\tau_\epsilon^a}\lVert\sigma_{\epsilon}(\lambda(\epsilon)M^{\psi_\epsilon}_s+X^0_s,\mathcal{L}_{X^\epsilon_s})\rVert^2\mathrm{d}s \nonumber\\
            	\leq&\,\, \frac{1}{4}\mathbb{E}\left(\sup_{s\in[0,T]}|Q^\epsilon_{s\wedge\tau_\epsilon^a}|^2\right)+\frac{C\epsilon\rho_{\sigma,\epsilon}^2}{\lambda^2(\epsilon)}+\frac{C\epsilon}{\lambda^2(\epsilon)}\mathbb{E}\int_{0}^{t\wedge\tau_\epsilon^a}|M^{\psi_\epsilon}_s|^2\mathrm{d}s  \nonumber\\
            	&+\,\, \frac{C\epsilon}{\lambda^2(\epsilon)}\mathbb{E}\int_{0}^{t\wedge\tau_\epsilon^a}\mathbb{E}(|X^\epsilon_s-X^0_s|^2)\mathrm{d}s+\frac{C\epsilon}{\lambda^2(\epsilon)}\int_{0}^{T}\lVert\sigma(X^0_s,\mathcal{L}_{X^0_s})\rVert^2\mathrm{d}s  \nonumber\\
            	\leq&\,\, \frac{1}{4}\mathbb{E}\left(\sup_{s\in[0,T]}|Q^\epsilon_{s\wedge\tau_\epsilon^a}|^2\right)+\frac{C_a\epsilon}{\lambda^2(\epsilon)}(\rho_{\sigma,\epsilon}^2+\epsilon+\epsilon\rho_{\sigma,\epsilon}^2+\rho_{b,\epsilon}^2+T)
            \end{align}

            For $\hat{I}_{4}(t)$, under \hyperref[h1]{\textbf{(H1)}}, (\ref{gamma}), Lemma \ref{ep0} and $\psi_\epsilon\in\mathcal{D}_m^T$, and by using the similar arguments in the proof of (\ref{mi4}) we have for any $\epsilon\in(0,\epsilon_{3}]$,
            \begin{align}\label{hati4}
            	\mathbb{E}\left(\sup_{t\in[0,T]}|\hat{I}_{4}(t)|\right) \leq&\, C\rho_{\sigma,\epsilon}\mathbb{E}\int_{0}^{t\wedge\tau_\epsilon^a}|Q^\epsilon_{s}||\psi_\epsilon(s)|\mathrm{d}s  \nonumber\\
            	&+\,C\mathbb{E}\int_{0}^{t\wedge\tau_\epsilon^a}\left(\lambda(\epsilon)|M^{\psi_\epsilon}_{s}|+\mathbb{W}(\mathcal{L}_{X^\epsilon_s},\mathcal{L}_{X^0_s})+\lVert\sigma(X^0_s,\mathcal{L}_{X^0_s})\rVert\right)|\psi_\epsilon(s)||Q^\epsilon_s|\mathrm{d}s  \nonumber\\
            	\leq&\,\, CT(a+\gamma)^2\left(\rho_{\sigma,\epsilon}+\lambda(\epsilon)+(\epsilon+\rho_{b,\epsilon}^2+\epsilon\rho_{\sigma,\epsilon}^2)^{\frac{1}{2}}\right)\mathbb{E}\left(\int_{0}^{T}|\psi_\epsilon|^2\mathrm{d}s\right)^{\frac{1}{2}}   \nonumber\\
            	\leq&\,\, C_a\left(\rho_{\sigma,\epsilon}+\lambda(\epsilon)+(\epsilon+\rho_{b,\epsilon}^2+\epsilon\rho_{\sigma,\epsilon}^2)^{\frac{1}{2}}\right)
            \end{align}

            Inserting the above inequalities (\ref{hati23}) and (\ref{hati4}) into (\ref{hatgron}) we deduce that for any $\epsilon\in(0,\epsilon_{3}]$,
            \begin{align}
            	\frac{3}{4}\mathbb{E}\left(\sup_{t\in[0,T]}|Q^\epsilon_{t\wedge\tau_\epsilon^a}|^2\right)\leq C_a\left(\rho_{\sigma,\epsilon}+\lambda(\epsilon)+\frac{\rho_{b,\epsilon}}{\lambda(\epsilon)}+\frac{\epsilon}{\lambda^2(\epsilon)}+\frac{1+\lambda(\epsilon)}{\lambda(\epsilon)}(\epsilon+\rho_{b,\epsilon}^2+\epsilon\rho_{\sigma,\epsilon}^2)^{\frac{1}{2}}\right)
            \end{align}
            Due to (\hyperref[b2]{\textbf{B2}}), we  obtain
            \begin{align}
            	\lim\limits_{\epsilon\to 0}\mathbb{E}\left(\sup_{t\in[0,T]}|M^{\psi_\epsilon}_{t\wedge\tau_\epsilon^a}-\nu^{\psi_\epsilon}_{t\wedge\tau_\epsilon^a}|^2\right)=\lim\limits_{\epsilon\to 0}\mathbb{E}\left(\sup_{t\in[0,T]}|Q^{\epsilon}_{t\wedge\tau_\epsilon^a}|^2\right)=0.
            \end{align}

            For any $\xi>0$, $\epsilon\in(0,\epsilon_{3}]$, $a\in\mathbb{N}$, we have
            \begin{align}
            	&P\left(\sup_{t\in[0,T]}|M^{\psi_\epsilon}_t-\nu^{\psi_\epsilon}_t|\geq\xi\right) \nonumber\\
            	\leq&\, P\left(\bigg(\sup_{t\in[0,T]}|M^{\psi_\epsilon}_{t\wedge\tau_\epsilon^a}-\nu^{\psi_\epsilon}_{t\wedge\tau_\epsilon^a}|\geq\xi\bigg)\cap(\tau_\epsilon^a\geq T)\right)+P(\tau_\epsilon^a< T) \nonumber\\
            	\leq&\,\, \frac{1}{\xi^2}\mathbb{E}\left(\sup_{t\in[0,T]}|M^{\psi_\epsilon}_{t\wedge\tau_\epsilon^a}-\nu^{\psi_\epsilon}_{t\wedge\tau_\epsilon^a}|^2\right)+\frac{C}{a^2}
            \end{align}
            Let $\epsilon\to 0$ and $a\to\infty$, we get the desired result
            \begin{align*}
            	\lim\limits_{\epsilon\to 0}P\left(\sup_{t\in[0,T]}|M^{\psi_\epsilon}_t-\nu^{\psi_\epsilon}_t|\geq\xi\right)=0.
            \end{align*}
            
        \end{proof}

    \end{proposition}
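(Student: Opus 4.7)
The plan is to verify \textbf{(MDP)}$\bm{_{2}}$ by the standard localize-then-estimate strategy, writing $Q^\epsilon_t := M^{\psi_\epsilon}_t - \nu^{\psi_\epsilon}_t$, controlling $\mathbb{E}\sup_t|Q^\epsilon_{t\wedge\tau_\epsilon^a}|^2$ via It\^o's formula and Gronwall, and then letting $\epsilon\to 0$ followed by $a\to\infty$. Concretely, define the stopping time $\tau_\epsilon^a := \inf\{t\geq 0:|M^{\psi_\epsilon}(t)|\geq a\}\wedge T$. Lemma \ref{mdp2lemma} together with Markov's inequality yields $P(\tau_\epsilon^a<T)\leq C/a^2$ uniformly for $\epsilon\in(0,\kappa_0]$, so for any $\xi>0$ the decomposition
\begin{equation*}
P\Bigl(\sup_{t\in[0,T]}|Q^\epsilon_t|\geq\xi\Bigr)\leq \frac{1}{\xi^2}\mathbb{E}\Bigl(\sup_{t\in[0,T]}|Q^\epsilon_{t\wedge\tau_\epsilon^a}|^2\Bigr)+\frac{C}{a^2}
\end{equation*}
reduces the problem to showing $\lim_{\epsilon\to 0}\mathbb{E}(\sup_t|Q^\epsilon_{t\wedge\tau_\epsilon^a}|^2)=0$ for each fixed $a$.

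For the stopped $L^2$ estimate, I apply It\^o's formula to $|Q^\epsilon_{t\wedge\tau_\epsilon^a}|^2$; the resulting expression has five pieces: the drift-difference term, the stochastic integral, the quadratic-variation term, the $\sigma_\epsilon\psi_\epsilon$ term, and the reflection term. The reflection term is non-positive by the monotonicity inequality (\ref{mono}) because $(M^{\psi_\epsilon},\hat K^{\epsilon,\psi_\epsilon})$ and $(\nu^{\psi_\epsilon},\hat K^{\psi_\epsilon})$ both belong to $\mathcal A$ after the obvious rescaling. The drift-difference term is further split into three pieces, mirroring equation (\ref{hati1}): (i) the $b_\epsilon{-}b$ error, controlled by $\rho_{b,\epsilon}/\lambda(\epsilon)\to 0$ via (\hyperref[h2]{\textbf{H2}}) and (\hyperref[b2]{\textbf{B2}}); (ii) the law-perturbation $b(\cdot,\mathcal L_{X^\epsilon})-b(\cdot,\mathcal L_{X^0})$, bounded by $L\,\mathbb W_2(\mathcal L_{X^\epsilon_s},\mathcal L_{X^0_s})/\lambda(\epsilon)\leq L(\epsilon+\rho_{b,\epsilon}^2+\epsilon\rho_{\sigma,\epsilon}^2)^{1/2}/\lambda(\epsilon)\to 0$ thanks to Lemma \ref{ep0} and (\ref{lame}); and (iii) the linearization term
\begin{equation*}
\frac{b(\lambda(\epsilon)M^{\psi_\epsilon}_s+X^0_s,\mathcal L_{X^0_s})-b(X^0_s,\mathcal L_{X^0_s})}{\lambda(\epsilon)}-b'(X^0_s,\mathcal L_{X^0_s})\nu^{\psi_\epsilon}_s,
\end{equation*}
which is handled by a mean-value expansion as in (\ref{hati132}): I write the finite difference as $b'(X^0_s+\theta_\epsilon(s)\lambda(\epsilon)M^{\psi_\epsilon}_s,\mathcal L_{X^0_s})\,M^{\psi_\epsilon}_s$, subtract and add $b'(X^0_s,\mathcal L_{X^0_s})M^{\psi_\epsilon}_s$, and use (\hyperref[b0]{\textbf{B0}}) on the first piece, obtaining a factor $\lambda(\epsilon)$ times an $a$-dependent polynomial bound, while the second piece produces $2\int_0^t |b'(X^0,\mathcal L_{X^0})|\,|Q^\epsilon|^2\,ds$ which is absorbed by Gronwall, using (\hyperref[b1]{\textbf{B1}}) to ensure $\int_0^T|b'|\,ds<\infty$.

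The martingale term is handled by Burkholder--Davis--Gundy combined with Young's inequality, producing the pre-factor $\tfrac{1}{4}\mathbb{E}\sup|Q^\epsilon_{\cdot\wedge\tau_\epsilon^a}|^2$ together with a remainder of order $\epsilon\rho_{\sigma,\epsilon}^2/\lambda^2(\epsilon)$ plus $\frac{\epsilon}{\lambda^2(\epsilon)}$ times an $L^2$ norm of $\sigma$, all of which vanish under (\ref{lame}). The quadratic-variation term contributes similarly. The $\sigma_\epsilon\psi_\epsilon$ term is handled by adding and subtracting $\sigma(X^0,\mathcal L_{X^0})\psi_\epsilon$, then using (\hyperref[h1]{\textbf{H1}}), (\hyperref[h2]{\textbf{H2}}), (\ref{gamma}), Lemma \ref{ep0} and $\|\psi_\epsilon\|_{\ell^2_T}^2\leq 2m$ together with Cauchy--Schwarz to produce an error bounded by $C_a(\rho_{\sigma,\epsilon}+\lambda(\epsilon)+(\epsilon+\rho_{b,\epsilon}^2+\epsilon\rho_{\sigma,\epsilon}^2)^{1/2})$. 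Combining everything, absorbing the $\tfrac14$ pre-factor, and applying Gronwall's lemma against $2\int|b'(X^0,\mathcal L_{X^0})|\,ds$ gives
\begin{equation*}
\mathbb{E}\sup_{t\in[0,T]}|Q^\epsilon_{t\wedge\tau_\epsilon^a}|^2\leq \Xi\,C_a\!\left(\rho_{\sigma,\epsilon}+\lambda(\epsilon)+\frac{\rho_{b,\epsilon}}{\lambda(\epsilon)}+\frac{\epsilon}{\lambda^2(\epsilon)}+\frac{1+\lambda(\epsilon)}{\lambda(\epsilon)}(\epsilon+\rho_{b,\epsilon}^2+\epsilon\rho_{\sigma,\epsilon}^2)^{1/2}\right),
\end{equation*}
which tends to $0$ as $\epsilon\to 0$ by (\ref{lame}), (\hyperref[h2]{\textbf{H2}}) and (\hyperref[b2]{\textbf{B2}}). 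Sending $\epsilon\to 0$ and then $a\to\infty$ in the Markov decomposition yields the claim.

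The main obstacle is the linearization step (iii) above: the naive bound of $\frac{b(\lambda M+X^0)-b(X^0)}{\lambda}-b'(X^0)\nu$ requires pointwise control of $M^{\psi_\epsilon}$, which is not a priori available in $L^\infty$ uniformly in $\epsilon$. This is precisely why the stopping-time localization $|M^{\psi_\epsilon}|\leq a$ is indispensable: it converts the polynomial growth in (\hyperref[b0]{\textbf{B0}}) into an $a$-dependent constant $C_a$, permits the bound $C_a\lambda(\epsilon)\to 0$, and makes the two-layer limit ($\epsilon\to 0$ then $a\to\infty$) go through via Lemma \ref{mdp2lemma}.
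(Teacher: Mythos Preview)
Your proposal is correct and follows essentially the same approach as the paper: localize via the stopping time $\tau_\epsilon^a$, apply It\^o's formula to $|Q^\epsilon_{t\wedge\tau_\epsilon^a}|^2$, decompose into the same five terms, handle the reflection term by monotonicity, split the drift term into the $b_\epsilon{-}b$, law-perturbation, and linearization pieces (the latter via the mean-value theorem and (\hyperref[b0]{\textbf{B0}})), control the stochastic and quadratic-variation terms by BDG plus Young, then close with Gronwall and the Markov decomposition. Your final bound and the two-layer limit $\epsilon\to 0$ then $a\to\infty$ coincide with the paper's, and your closing remark correctly identifies why the stopping-time localization is essential for the linearization step.
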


\end{document}